\newtheorem{thm}{Theorem}
\newtheorem{lem}[thm]{Lemma}
\newtheorem{cor}[thm]{Corollary}
\numberwithin{equation}{section}
\numberwithin{thm}{section}
\numberwithin{table}{section}
\def\squareforqed{\hbox{\rlap{$\sqcap$}$\sqcup$}}
\def\qed{\ifmmode\squareforqed\else{\unskip\nobreak\hfil
\penalty50\hskip1em\null\nobreak\hfil\squareforqed
\parfillskip=0pt\finalhyphendemerits=0\endgraf}\fi}
\def \balpha{\bm{\alpha}}
\def \bbeta{\bm{\beta}}
\def\cM{{\mathcal M}}
\def\cP{{\mathcal P}}
\def\cW{{\mathcal W}}
\def\inv{\mathrm{inv}} 
\def\sqrt{\mathrm{rt}}
\def \F {{\mathbb F}}
\def \Ki {\F_q(T)_\infty}
\def\\{\cr}
\def\({\left(}
\def\){\right)}
\newcommand{\ov}{\overbar}
\newcommand{\overbar}[1]{\mkern 1.5mu\overline{\mkern-1.5mu#1\mkern-1.5mu}\mkern 1.5mu}
 \newcommand{\Mod}[1]{\ (\mathrm{mod}\ #1)}
\def\l@subsection{\@tocline{2}{0pt}{2.8pc}{1pc}{}}
\def\l@subsubsection{\@tocline{2}{0pt}{5pc}{7.5pc}{}}
\title[Bilinear Kloosterman sums in function fields ]{Bilinear Kloosterman sums in function fields and the distribution of irreducible polynomials}
 \author[C.~Bagshaw]{Christian Bagshaw}
 \address{School of Mathematics and Statistics, University of New South Wales.
 Sydney}
 \email{c.bagshaw@unsw.edu.au}
\keywords{function field, finite field, exponential sum, Kloosterman sum, Bombieri-Vinogradov, level of distribution of primes}
\subjclass[2020]{11T06, 11T23, 11N05}
\begin{document}
\maketitle

\begin{abstract}
Inspired by the work of Bourgain and Garaev (2013), we provide new bounds for certain weighted bilinear Kloosterman sums in polynomial rings over a finite field. As an application, we build upon and extend some results of Sawin and Shusterman (2022). These results include bounds for exponential sums weighted by the M{\"o}bius function and a level of distribution for irreducible polynomials beyond 1/2, with arbitrary composite modulus. Additionally, we can do better when averaging over the modulus, to give an analogue of the Bombieri-Vinogradov Theorem with a level of distribution even further beyond 1/2. 
\end{abstract}

\tableofcontents

 \section{Introduction}

\subsection{Background}
Motivated by a range of applications, in recent years there has been notable effort dedicated to studying certain bilinear forms of Kloosterman sums. One important example are those of the form
\begin{align}\label{eq:bilinear_integercase}
    \sum_{\substack{1 \leq x_1 < N_1 \\ (x_1,m) = 1}}\sum_{\substack{1 \leq x_2 < N_2 \\ (x_2, m) = 1}}\alpha_{x_1}\beta_{x_2}e_m(a\ov{x_1}\ov{x_2})
\end{align}
for $a,m \in \mathbb{Z}$ and complex weighs $\balpha$ and $\bbeta$, where $e_m(x) = \exp(2\pi ix/m)$ and where $\ov{x}$ denotes the inverse of $x$ modulo $m$. Perhaps the most well-known application of bounds for \eqref{eq:bilinear_integercase} has been to estimate exponential sums over primes
\begin{align}\label{eq:vonmangoldt_integer}
    \sum_{\substack{1 \leq x < N \\ (x,m) = 1}}\Lambda(x)e_m(a\ov{x})
\end{align}
as in \cite{baker2012, fouvryshparlinski2011, BourgainGaraev2013, garaev2010, fouvrymichel1998, irving2014, korolevchange2020}, where $\Lambda$ denotes the Von Mangoldt function over $\mathbb{Z}$ (although later, by abuse of notation, this will denote the Von Mangoldt function in a different setting). Bounds for \eqref{eq:bilinear_integercase} have also found applications to the Brun-Titchmarsh theorem \cite{friedlanderiwaniec, BourgainGaraev2013, BourgainGaraev2014} and the distribution of fractional parts of fractions with modular inverses \cite{karatsuba1995fractional}. Higher dimensional analogues were also considered in \cite{luo1999, shparlinski2007}.

In their recent groundbreaking work \cite{sawinshusterman}, Sawin and Shusterman consider  analogues of \eqref{eq:bilinear_integercase} and \eqref{eq:vonmangoldt_integer} in polynomial rings over finite fields. They establish highly non-trivial bounds and apply them to obtain a level of distribution beyond 1/2 for irreducible polynomials to square-free modulus (for details see the discussion in Section \ref{sec:apps_mobius}), and even further to establish a strong and explicit form of the twin prime conjecture in that setting. Here we also consider \eqref{eq:bilinear_integercase} and \eqref{eq:vonmangoldt_integer} in function fields, but focus on working with arbitrary composite modulus. This includes improving some bounds from \cite{sawinshusterman} on sums of the form \eqref{eq:bilinear_integercase} and \eqref{eq:vonmangoldt_integer}, and also extending results regarding the level of distribution of irreducible polynomials, from square-free to arbitrary modulus. Furthermore, we establish a function field version of the Bombieri-Vinogradov Theorem with a level of distribution even further beyond $1/2$.

\subsection{General notation}

We fix an odd prime power $q = p^\ell$ and let $\F_q$ denote the finite field of order $q$.  Let $\F_q[T]$ denote the ring of univariate polynomials with coefficients from $\F_q$. Throughout, $F \in \F_q[T]$ will always denote an arbitrary polynomial of degree $r$. 

Next, we denote by $\Ki$ the field of Laurent series in $1/T$ over $\F_q$. That is,
$$\Ki = \left\{\sum_{i=-\infty}^na_iT^i ~:~n \in \mathbb{Z}, ~a_i \in \F_q, ~a_n \neq 0 \right\}.$$
We note that of course $\F_q[T] \subseteq \Ki$. On $\Ki$ we have a non-trivial additive character 
$$e\left( \sum_{i=-\infty}^na_iT^i\right) = \exp\left(\frac{2\pi i}{p}\text{Tr}(a_{-1}) \right)$$
where $\text{Tr}: \F_q \to \F_p$ is the absolute trace. Further, for any $F \in \F_q[T]$ we note that 
$$e_F(x) = e(x/F)$$
defines a non-trivial additive character of $\F_q[T]/\langle F(T) \rangle$. See \cite{Hayes1966} for additional details. 

We will let $\cM$ and $\cP$ be the set of all monic and all monic irreducible polynomials, respectively.  For a positive integer $n$ we will let $\cM_n$ be the set of monic polynomials of degree $n$. 

We can also define an analogue of the M{\"o}bius function in $\F_q[T]$, as
\begin{align*}
    \mu(x)
    =
    \begin{cases}
        0, &x\text{ is not square-free}\\
        (-1)^k, &x = hP_1^{e_1}\cdots P_k^{e_k} \text{ for $h \in \F_q$ and for distinct $P_i \in \cP$.}
    \end{cases}
\end{align*}
Similarly, we can define the von Mangoldt function
\begin{align*}
    \Lambda(x)
    =
    \begin{cases}
        \deg P, &x = hP^k\text{ for some $P \in \cP$ and $h \in \F_q$}\\
        0, &\text{otherwise}.
    \end{cases}
\end{align*}

Finally, given some $x \in \F_q[T]$, $\ov{x}$ will denote the inverse of $x$ modulo $F$ (unless it is specified that the inverse should be taken to a different modulus). Also, $\epsilon$ will denote some small constant (unless otherwise specified).

\section{Results}\label{sec:results}
\subsection{Bilinear Kloosterman sums}\label{sec:results_bilinear}
Given positive integers $m$ and $n$, sequences of complex weights 
\begin{align}\label{eq:weights}
    \balpha = (\alpha_{x_1})_{\deg x_1 < m}, ~\bbeta = (\bbeta_{x_2})_{\deg x_2 < n} \text{ with }\|\balpha\|_\infty, \|\bbeta\|_\infty < q^{o(r)}
\end{align}
and $a \in \F_q[T]$ we define the bilinear Kloosterman sum
$$\cW_{F,a}(m,n;\balpha, \bbeta) = \sum_{\substack{\deg x_1 < m \\ (x_1, F) = 1}}\sum_{\substack{\deg x_2 < n \\ (x_2, F) = 1}}\alpha_{x_1}\beta_{x_2}e_F(a\ov{x_1x_2}).$$
We will be interested in improving upon the trivial bound $q^{m+n + \epsilon r}$. 
As mentioned previously, bounds on sums of this form are used as tools to establish some of the main results in \cite{sawinshusterman}. Here we take a different approach to bounding these sums which can hold for arbitrary $F$, based on the ideas of Bourgain and Garaev \cite{BourgainGaraev2013}, Garaev \cite{garaev2010}, Fouvry and Shparlinski \cite{fouvryshparlinski2011}, Banks, Harcharras and Shparlinski \cite{BanksHarcharrasShparlinski2003} and Irving \cite{irving2014}. 

More flexible bounds, given explicitly in terms of additive energies of modular inversions, are stated in Section \ref{sec:bilinearsums}. These would imply function field analogues of most of the bounds in \cite{BourgainGaraev2013}. But the following will be the most useful for our purposes.

\begin{thm}\label{thm:bilinear_savings}
Let $\epsilon > 0$, and let $a,F \in \F_q[T]$ be coprime with $\deg F = r$. Then for any positive integers $n$ and $m$ satisfying 
$$n \geq r\epsilon \text{ and } m \geq r(1/4 + \epsilon)$$ 
and weights as in \eqref{eq:weights}, we have 
\begin{align*}
    \cW_{F,a}(m,n;\balpha, \bbeta) \ll_{\epsilon}  q^{m+n - r\delta}
 \end{align*}
 for some $\delta= \delta(\epsilon) > 0$. 
\end{thm}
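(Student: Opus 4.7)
My plan is to adapt the Bourgain--Garaev strategy from \cite{BourgainGaraev2013} to the function-field setting, with special attention to the fact that $F$ is arbitrary and, in particular, need not be squarefree as in \cite{sawinshusterman}. The first step is to apply the Cauchy--Schwarz inequality in the $x_1$ variable, which eliminates the $\balpha$ weights and gives
\begin{equation*}
|\cW_{F,a}(m,n;\balpha,\bbeta)|^2 \leq q^{m+o(r)}\sum_{x_2,x_2'}\beta_{x_2}\overline{\beta_{x_2'}} \sum_{\substack{\deg x_1<m \\ (x_1,F)=1}} e_F\bigl(a\overline{x_1}(\overline{x_2}-\overline{x_2'})\bigr).
\end{equation*}
The diagonal $x_2=x_2'$ contributes $q^{2m+n+o(r)}$, which fits inside the target bound thanks to the hypothesis $n\geq\epsilon r$. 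For the off-diagonal I group terms by $\lambda:=\overline{x_2}-\overline{x_2'}\pmod F$, reducing to the task of bounding $T:=\sum_{\lambda\neq 0} N(\lambda) K(\lambda)$, where $N(\lambda)$ counts weighted pairs $(x_2,x_2')$ producing this $\lambda$ and $K(\lambda)=\sum_{x_1} e_F(a\lambda\overline{x_1})$ is an incomplete Kloosterman sum modulo $F$.

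A naive Weil bound on $K(\lambda)$ would yield savings only when $m>r/2$. To reach the Burgess-type threshold $m>r/4$, I apply H\"older's inequality
\begin{equation*}
|T|^{2k} \leq \Bigl(\sum_\lambda N(\lambda)\Bigr)^{2k-1}\sum_\lambda N(\lambda)|K(\lambda)|^{2k}
\end{equation*}
with an integer parameter $k\geq 2$ to be optimised. The first factor is $q^{2n(2k-1)+o(r)}$ by simple counting. After expansion, the second factor reduces, up to lower-order terms, to an additive energy of modular inverses: a count of tuples of polynomials of degrees $<m$ and $<n$ whose inverses modulo $F$ satisfy a single linear equation. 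This additive-energy quantity is precisely the type of object which the paper flags as being studied in Section \ref{sec:bilinearsums}.

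The principal obstacle is to establish a strong bound for this additive energy when the modulus $F$ is arbitrary. My strategy is to invoke the Chinese Remainder Theorem to reduce to the case of prime-power moduli $P^e \mid F$, and then to apply the Weil bound for complete Kloosterman sums modulo $P^e$ (which for $e\geq 2$ follows from the standard stationary-phase analysis), combined with a Parseval--Plancherel identity on $\F_q[T]/\langle P^e\rangle$. The uniformity in the multiplicities of prime factors of $F$ should then be absorbed into the $q^{o(r)}$ factor via the trivial divisor-function bound $\tau(F)\leq q^{o(r)}$. Optimising $k$ to balance the contributions gives precisely the hypothesis $m\geq r(1/4+\epsilon)$ and produces a power saving $q^{m+n-r\delta}$ with $\delta=\delta(\epsilon)>0$, as required.
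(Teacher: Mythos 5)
Your overall architecture (Cauchy--Schwarz, then H\"older, then reduction to additive energies of modular inversions) is the right family of ideas, but two of your specific choices break the argument. First, the high moment is placed on the wrong variable. After your initial Cauchy--Schwarz in $x_1$, the short variable $x_2$ (degree $<n$) only ever enters through the two-fold differences $\lambda=\ov{x_2}-\ov{x_2'}$, so the only information about $n$ available downstream is $\sum_\lambda N(\lambda)\ll q^{2n+o(r)}$ and at best a two-fold energy $\sum_\lambda N(\lambda)^2$. If you carry your scheme to the end, even granting the optimal energy bounds (e.g.\ $E^{\inv}_{F,k}(m)\ll q^{km+o(r)}$ for $m(2k-1)\le r$), the off-diagonal term is admissible only when $n\gtrsim r/4$: at $m\approx r/4$ and $n=\epsilon r$ with $\epsilon$ small, no choice of $k$ gives a power saving. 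The paper's proof does the opposite: it applies H\"older with a large exponent $k=k(\epsilon)$ inside the $x_1$-sum so that the \emph{short} variable is amplified into a $k$-fold sum of inverses, producing $E^{\inv}_{F,k}(n)$ with $nk>r/2$ (essentially diagonal), while the long variable $m$ receives only a two-fold treatment; the threshold $m>r(1/4+\epsilon)$ then comes from $E^{\inv}_{F,2}(m)\ll q^{2m+o(r)}$ for $m<r/3$. Your arrangement cannot recover the stated range $n\ge\epsilon r$.

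Second, your plan for what you call the principal obstacle --- bounding the additive energy by CRT, the Weil bound modulo $P^e$, and Parseval --- cannot deliver what is needed. Square-root cancellation in complete Kloosterman sums translates through Parseval into something like $E^{\inv}_{F,2}(m)\ll q^{4m-r}+q^{m+r+o(r)}$, which at $m\approx r/4$ is of size $q^{5r/4}$, hopelessly far from the required near-diagonal bound $q^{2m+o(r)}=q^{r/2+o(r)}$. This is not incidental: Weil is exactly the input that stalls at the $m>r/2$ barrier you correctly identify at the outset, and feeding it back into the energy estimate reproduces that barrier. The mechanism that actually breaks $r/2$ is diophantine rather than analytic: for $\ov{x_1}+\cdots+\ov{x_k}\equiv\ov{y_1}+\cdots+\ov{y_k}\Mod{F}$ with all degrees $<m$ and $m(2k-1)<r$, clearing denominators yields a polynomial of degree $<r$ divisible by $F$, hence identically zero, and one then counts solutions of the resulting polynomial identity. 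This is precisely the content of Lemmas \ref{lem:inverse_energy} and \ref{lem:inverse_energy_k=2} (the former imported from \cite{BagshawKerr2023}, the latter proved via a pointwise bound on $I_{F,a,k}(n)$). Without this ingredient your argument does not get below $m>r/2$, let alone to $m>r(1/4+\epsilon)$.
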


\subsection{Kloosterman sums with the M{\"o}bius function}\label{sec:apps_mobius}
As in \cite{SawinShusterman2, sawinshusterman}, we next consider sums of the form 
\begin{align}\label{eq:functionfield_mobiussum}
    \sum_{\substack{\deg x < n \\ (x,F) = 1}}\mu(x)e_F(a\ov{x}),
\end{align}
and seek improvement over the trivial bound $q^n$, for $n$ as small as possible in comparison to $r$. We note that we are working with the M{\"o}bius function as opposed to the Von Mangoldt function as in \eqref{eq:vonmangoldt_integer}, but the similarity between Vaughan's identity \cite[Propositions 13.4 and 13.5]{IwaniecKowalski2004} for $\mu$ and $\Lambda$ allows for both of these to be treated very similarly. 

Analogous results dealing with sums as in \eqref{eq:functionfield_mobiussum} over the integers always require $\gcd(a,F) = 1$. But because the analogue of the Generalized Riemann Hypothesis (GRH) holds in $\F_q[T]$, we can drop this condition (with some additional analytic effort). 

A special case of \cite[Theorem 1.13]{SawinShusterman2} is the following: let $\epsilon > 0$ and suppose $F$ is irreducible. If 
\begin{align}\label{eq:mobius_q_bound}
    q > 4e^2\left(1 + \frac{3}{2p} \right)^{p/\epsilon}p^2
\end{align}
then for $n > r\epsilon$ we have 
$$\sum_{\substack{\deg x < n \\ (x,F) = 1}}\mu(x)e_F(a\ov{x}) \ll_{\epsilon} q^{n(1-\delta)}$$
for some $\delta = \delta(\epsilon) > 0$. In summary, this implies that for any $\epsilon > 0$, one obtains a power savings over the trivial bound for any $n > r\epsilon$ (for sufficiently large $q$ in terms of $p$ and $\epsilon$). This achievement of a power savings in arbitrarily small intervals far surpasses any previous work in this area. 

Here we consider what can be said without these restrictions on $q$, and for arbitrary composite modulus $F$. Using Theorem \ref{thm:bilinear_savings} together with classical ideas regarding Vaughan's identity, we show the following.
\begin{thm}\label{thm:mobius_r/2}
Let $\epsilon > 0$ and $a,F \in \F_q[T]$ with $\deg F = r$. For any positive integer $n$ satisfying $n > r(1/2 + \epsilon)$, 
$$\sum_{\substack{\deg x < n \\ (x,F) = 1}}\mu(x)e_F(a\ov{x}) \ll_\epsilon q^{n(1-\delta)}$$
for some $\delta = \delta(\epsilon) > 0$.
\end{thm}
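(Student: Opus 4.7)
The plan is to apply Vaughan's identity for the M\"obius function, reducing the estimate to bilinear Kloosterman sums (to which Theorem~\ref{thm:bilinear_savings} applies) and ``Type I'' sums whose inner sum is an incomplete Kloosterman sum in one variable (handled by completion and Weil's bound).

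First, I would apply Vaughan's identity for $\mu$ in $\F_q[T]$ (a straightforward adaptation of \cite[Prop.~13.4]{IwaniecKowalski2004}) with cutoff parameter $U = \lfloor r\epsilon/2 \rfloor$, and substitute into the sum $S$ of interest. Up to a bounded number of similar terms, this produces $S = \Sigma_I + \Sigma_{II}$ with
\[
\Sigma_I = \sum_{\deg b < U} \alpha_b \sum_{\substack{\deg c < n - \deg b \\ (bc,F)=1}} e_F(a\,\ov{b}\,\ov{c}), \quad \Sigma_{II} = \sum_{\substack{\deg b,\deg c \geq U \\ \deg(bc) < n \\ (bc,F)=1}} \alpha_b \beta_c\, e_F(a\,\ov{b}\,\ov{c}),
\]
where the weights $\alpha_b, \beta_c$ are bounded by a divisor function, and hence of size $q^{o(r)}$.

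For $\Sigma_{II}$, I would split each variable dyadically into ranges $\deg b \in [B, 2B)$ and $\deg c \in [C, 2C)$. If $\max(B, C) < r(1/4 + \epsilon/4)$, then $B + C < r(1/2 + \epsilon/2)$ and the trivial bound $q^{B+C+o(r)}$ already beats $q^n$ by a power; otherwise $\max(B, C) \geq r(1/4 + \epsilon/4)$ while $\min(B, C) \geq U \geq r\epsilon/3$, so Theorem~\ref{thm:bilinear_savings} (with a suitably smaller constant in place of $\epsilon$) delivers a power saving on the dyadic piece. Summing over the $O((\log r)^2)$ dyadic pieces preserves $\Sigma_{II} \ll q^{n - r\delta_1}$ for some $\delta_1 = \delta_1(\epsilon) > 0$.

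For $\Sigma_I$, for each $b$ the inner sum is an incomplete Kloosterman sum modulo $F$ of length $q^{n-\deg b}$ with multiplier $a\,\ov{b}$. When $\gcd(a,F)=1$, the standard completion argument in $\F_q[T]$ combined with the Weil bound on the complete Kloosterman sum modulo $F$ (extended to composite $F$ by the Chinese remainder theorem) bounds each inner sum by $q^{r/2+o(r)}$, giving $\Sigma_I \ll q^{U+r/2+o(r)}$, which is power-saving since $U + r/2 < r(1/2 + \epsilon/2) < n$. The main obstacle is the degenerate case $\gcd(a,F) > 1$, where the complete Kloosterman sum modulo $F$ can be as large as $\varphi(F) \asymp q^r$ and the straight completion argument yields nothing. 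Here I would reduce $e_F(a\,\ov{c})$ to a phase modulo the ``effective'' modulus $F^* = F/\gcd(a,F)$ on which it becomes primitive, detect the finer coprimality condition $(c,F)=1$ (in place of $(c,F^*)=1$) by M\"obius inversion, and control the resulting twisted sums of $\mu$ via the Weil bound for Dirichlet $L$-functions over $\F_q[T]$ (the function-field GRH). This last analytic input is the ``additional effort'' the introduction alludes to.
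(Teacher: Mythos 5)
Your architecture for the coprime case is sound and essentially matches the paper's: Vaughan's identity, Weil completion for the Type I sums, and Theorem~\ref{thm:bilinear_savings} for the Type II sums (the degree-by-degree split is legitimate, since the condition $\deg(bc)<n$ decouples into a condition on $\deg b+\deg c$ alone). The genuine gap is in the case $\gcd(a,F)\neq 1$, and it is not where you locate it. Write $F_0=F/\gcd(a,F)$ and $r_0=\deg F_0$. Theorem~\ref{thm:bilinear_savings} is stated only for $(a,F)=1$, so for $\Sigma_{II}$ you must pass to the modulus $F_0$; the theorem then saves only a factor $q^{\delta r_0}$, which is not a power of $q^{n}$ when $r_0$ is small (e.g.\ $F=P^{r}$ with $P$ linear and $a=P^{r-1}$, so $r_0=1$: the bilinear Kloosterman structure mod $F_0$ carries essentially no cancellation). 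The same failure occurs for $\Sigma_I$, where completion modulo $F_0$ gives at best $q^{\,n-r_0/2+o(n)}$. So your plan breaks for \emph{both} sum types when $r_0=o(n)$, and the rescue you propose --- ``twisted sums of $\mu$ via GRH'' --- cannot be executed inside $\Sigma_I$: after Vaughan's identity the inner variable $c$ carries no M\"obius weight, and M\"obius-inverting the coprimality condition $(c,F)=1$ produces factors $\mu(d)$ over divisors $d$ of $F$, not character-twisted sums $\sum_x\mu(x)\chi(x)$.

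The missing idea is a case split on $r_0$ performed \emph{before} Vaughan's identity. If $r_0\le n(1/2-2\epsilon)$, the phase $e_F(a\ov{x})=e_{F_0}(a_0\ov{x})$ depends only on $x$ modulo $F_0$, so the original sum is $\ll q^{r_0}\max_{y}\big|\sum_{x\equiv y\Mod{F_0}}\mu(x)\big|\ll q^{r_0+n(1/2+\epsilon')}\ll q^{n(1-\epsilon')}$ by the GRH bounds on $\sum_x\mu(x)\chi(x)$ (Corollary~\ref{cor:mobius_arith}); this is where the function-field GRH actually enters, applied to the full sum rather than to $\Sigma_I$. If instead $r_0> n(1/2-2\epsilon)$, then $r_0\asymp n\asymp r$ and your Vaughan argument goes through with $(a_0,F_0)$ in place of $(a,F)$ (the stronger condition $(x,F)=1$ versus $(x,F_0)=1$ is absorbed into the weights for $\Sigma_{II}$ and handled by Lemma~\ref{lem:weil_different_modulus} for $\Sigma_I$), since a saving of $q^{\delta r_0}$ is now a genuine power of $q^{n}$. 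This two-case structure is exactly how the paper's proof is organised.
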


For a comparison with \cite[Theorem 1.13]{SawinShusterman2}, the most important point is that this result holds for arbitrary modulus $F$ as opposed to only irreducible modulus. But this also does not require the restriction on $q$ as in \eqref{eq:mobius_q_bound}. Thus, Theorem \ref{thm:mobius_r/2} gives an improvement for irreducible modulus when $q = p^\ell$ and
\begin{align*}
    &p \in \{3\} \text{ and } \ell < 8, \\
    &p \in \{5,7\} \text{ and } \ell < 6, \\
    &p \in \{11,...,23\} \text{ and } \ell < 5, \\
    &p \in \{29,...,587\} \text{ and } \ell < 4, \\
    &p \in \{593,...\} \text{ and } \ell < 3. \\
\end{align*}
Another important avenue to pursue with regard to these sums is obtaining more explicit (and larger) savings over the trivial bound. In \cite{sawinshusterman}, these are required for applications. For square-free modulus $F$, \cite[Theorem 1.7]{sawinshusterman} demonstrates
\begin{align}\label{eq:SS_1/32}
    \sum_{\substack{\deg x < n \\ (x,F) = 1}}\mu(x)e_F(a\ov{x}) \ll_{\epsilon} q^{3r/16 + 25n/32 + \epsilon n},
\end{align}
which is non-trivial when $n > 6r/7.$

This can be improved, and again can be extended to arbitrary modulus. The proof makes use of some ideas of Garaev \cite{garaev2010} and Fouvry and Shparlinski \cite{fouvryshparlinski2011}, but we can do better in $\F_q[T]$ because of GRH. 
\begin{thm}\label{thm:mobius_3/4}
Let $a,F \in \F_q[T]$ with $\deg F = r$ and let $n$ denote a positive integer. Then for any $\epsilon > 0$, 
$$\sum_{\substack{\deg x < n \\ (x,F) = 1}}\mu(x)e_F(a\ov{x}) \ll_{\epsilon} q^{15n/16 + \epsilon n} + q^{2n/3 + r/4 + \epsilon n}.$$
\end{thm}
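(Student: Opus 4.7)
The plan is to combine Vaughan's identity in $\F_q[T]$ with our bilinear Kloosterman bound (Theorem \ref{thm:bilinear_savings}) and the Weil bound for single-variable Kloosterman sums modulo $F$, following the general strategy of Garaev \cite{garaev2010} and Fouvry-Shparlinski \cite{fouvryshparlinski2011}.

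First I would apply the function field analogue of Vaughan's identity for $\mu$ with a parameter $U$ to be optimized, decomposing the sum into $O(1)$ pieces of two shapes: Type I pieces
$$S_I \;=\; \sum_{\substack{\deg u \leq U_1 \\ (u,F)=1}} c_u \sum_{\substack{\deg v \leq n - \deg u \\ (v,F)=1}} e_F(a\ov{uv}),$$
in which $v$ is summed smoothly and $U_1$ is controlled by $U$, and Type II pieces
$$S_{II} \;=\; \sum_{\substack{\deg u \leq M \\ (u,F)=1}} \sum_{\substack{\deg v \leq N \\ (v,F)=1}} c_u d_v \, e_F(a\ov{uv})$$
with arbitrary bounded coefficients $c_u, d_v$, both $M, N \geq U$, and $M + N \leq n$. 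For Type II I would invoke Theorem \ref{thm:bilinear_savings} (or, in extreme ranges of $(M,N)$, the more flexible variants promised in Section \ref{sec:bilinearsums}), provided $U \geq r(1/4+\epsilon)$; this yields a saving of $q^{M+N - r\delta}$ for an explicit $\delta(\epsilon)>0$ and, after optimization of $U$, the first term $q^{15n/16+\epsilon n}$. For Type I I would Fourier-expand the indicator $\mathbf{1}_{\deg v \leq n - \deg u}$ modulo $F$, reducing the $v$-sum to single-variable Kloosterman sums modulo $F$ which by the Weil bound (with its standard extension to composite $F$) are $O(q^{r/2+\epsilon r})$; summing over $u$ in its range of size $q^{U_1}$ and integrating against the Fourier coefficients produces a bound of the form $q^{U_1+r/2+\epsilon n}$, yielding the second term $q^{2n/3+r/4+\epsilon n}$ after balancing $U_1$ against the Type II contribution.

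Two features of the function field setting enter essentially. First, we impose no coprimality condition on $a$ and $F$: small common factors of $a$ and $F$ cause the completed Kloosterman sums to degenerate, but the contribution of $x$ sharing a prime factor with $F$ can be peeled off and handled by the prime polynomial theorem in short intervals, which is a consequence of the Riemann hypothesis in $\F_q[T]$. The remaining non-coprimality of $a$ can then be absorbed into a reduced modulus dividing $F$ without degrading the Weil bound essentially. Second, the same Riemann hypothesis is what lets us control the contributions of small primes without losing factors analogous to powers of $\log x$ in the integer case. The main technical obstacle I anticipate is ensuring that the lower threshold $M \geq r(1/4+\epsilon)$ of Theorem \ref{thm:bilinear_savings} is respected throughout the Vaughan optimization: the edge cases where $M$ approaches $r/4$ must be handled by the more flexible bilinear estimates from Section \ref{sec:bilinearsums}, and it is precisely this flexibility (combined with the function field cancellation in Type I) that should allow the first exponent to be pushed down to $15n/16$ and the second to $2n/3+r/4$.
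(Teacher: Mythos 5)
Your skeleton (Vaughan's identity producing Type~I and Type~II sums, Weil for Type~I, bilinear bounds for Type~II, and a separate treatment of the degenerate case $\gcd(a,F)\neq 1$ via a reduced modulus $F_0$) matches the paper's, but there are two concrete gaps that prevent the argument as written from reaching the stated exponents.

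First, and most seriously, you propose to handle the Type~II sums with Theorem~\ref{thm:bilinear_savings}. That theorem only supplies an \emph{unspecified} power saving $\delta(\epsilon)>0$, so it can prove a bound of the shape $q^{n(1-\delta)}$ (this is exactly Theorem~\ref{thm:mobius_r/2}) but it cannot produce the explicit exponents $15n/16$ and $2n/3+r/4$; no ``optimization of $U$'' converts a non-explicit $\delta$ into the constant $1/16$. The paper instead works directly with the quantitative bilinear estimates: Lemma~\ref{lem:bilinear} with $k_1=k_2=2$ and the Garaev-type bound \eqref{eq:garaev}, each fed with the explicit additive-energy bound $E^{\inv}_{F,2}(n)\ll q^{2n+\epsilon n}+q^{7n/2-r/2+\epsilon n}$ of Lemma~\ref{lem:inverse_energy_k=2}, with a case analysis over the relative sizes of the two variable ranges versus $r_0$. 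These are the ingredients you relegate to ``extreme ranges''; they are in fact the engine of the whole proof. Second, your Type~I treatment (complete the inner sum and apply Weil, giving $q^{u+r/2}$ summed over $\deg u\le U_1$) is insufficient on its own: with $U=r_0/3$ the Type~I variable ranges up to $2r_0/3$, and $q^{2r_0/3+r_0/2}$ exceeds $q^n$ when $r_0\approx n$. The paper must treat Type~I sums with large $u$ as bilinear sums as well (and, in one range, by a bespoke Cauchy--Schwarz plus gcd-divisor argument). Finally, a small point on the non-coprime case: the sum is already restricted to $(x,F)=1$, so there is nothing to ``peel off''; the issue is that $e_F(a\ov{x})=e_{F_0}(a_0\ov{x})$ with $r_0=\deg F_0$ possibly small, and the correct remedy when $r_0<7n/16$ is to sort $x$ into residue classes modulo $F_0$ and invoke the GRH-based square-root cancellation for $\sum\mu(x)$ in arithmetic progressions (Corollary~\ref{cor:mobius_arith}), giving $q^{r_0+n(1/2+\epsilon)}$ --- that is where the Riemann hypothesis for function fields actually enters.
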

This is non-trivial when $n > 3r/4$,  and gives a savings of $q^{r/16}$ over the trivial bound when $n \approx r$. Also, this always improves on \eqref{eq:SS_1/32}. 

For applications, we will also make use of the following variant of a result of Irving \cite{irving2014} which gives an improvement on average over the modulus. 

\begin{thm}\label{thm:irving}
    For any positive integers $n$ and $r$ and any $\epsilon > 0$
    \begin{align*}
        \sum_{\deg F = r}\max_{a \in \F_q[T]}\bigg{|}\sum_{\substack{\deg x < n \\ (x,F) = 1}}&\mu(x)e_F(a\ov{x})\bigg{|} \\
        &\ll_{\epsilon} q^{r + n\epsilon}(q^{9n/10 } + q^{r/6 + 13n/18} + q^{13n/8 - 5r/6}).
    \end{align*}
\end{thm}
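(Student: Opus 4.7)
The plan follows the integer-case argument of Irving~\cite{irving2014}, transplanted to $\F_q[T]$. The starting point is a Vaughan identity for $\mu$ on $\F_q[T]$ with a parameter $U=q^u$: this decomposes the inner sum as a sum of \emph{Type I} pieces, in which one of two variables runs over polynomials of degree at most $u$, and a \emph{Type II} bilinear piece in which both variables have degree exceeding $u$. Each piece must be bounded uniformly in $a$, averaged over $F$, and then $u$ is optimised; the three terms in the statement correspond to three regimes of the optimisation.

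For the Type I pieces one freezes the short variable and completes the resulting incomplete Kloosterman sum in the long variable via Fourier expansion over $\F_q[T]/F$. The Weil bound for complete Kloosterman sums then gives a pointwise estimate \emph{uniform in $a$}, so the maximum over $a$ is free and the average over $F$ contributes only the trivial factor $q^r$. After balancing the Vaughan parameters this yields the $q^{r+n\epsilon}q^{9n/10}$ contribution.

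The Type II piece is more delicate, as the maximum over $a$ cannot be handled by a union bound (replacing $\max_a$ with $\sum_a$ would cost $q^r$ and destroy any saving). Instead, denoting by $a_F$ the maximiser for each $F$ and introducing a unit-modulus weight $\lambda_F$ to remove the absolute value, one applies Cauchy--Schwarz in the long variable and expands the resulting square. This reduces the problem to counting quadruples $(y_1,y_2,z_1,z_2)$ satisfying
\[
y_1 z_1\equiv y_2 z_2\pmod F,
\]
plus a controlled off-diagonal character sum in $F$. The diagonal is estimated by divisor bounds in $\F_q[T]$, while the off-diagonal is controlled using the additive-energy estimates for modular inversions that underlie Theorem~\ref{thm:bilinear_savings}. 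Optimising $u$ against the relative sizes of $n$ and $r$ in the two remaining regimes produces the terms $q^{r+n\epsilon}q^{r/6+13n/18}$ and $q^{r+n\epsilon}q^{13n/8-5r/6}$.

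The principal obstacle is the interplay between the adversarial choice of $a_F$ and the averaging over $F$: this is precisely why one must dualise by a unit weight $\lambda_F$ and apply Cauchy--Schwarz \emph{before} squaring, rather than invoking bounds for each $(a,F)$ separately. A secondary but standard technical point is maintaining the coprimality condition $(x,F)=1$ through the Vaughan decomposition, which is handled by M\"obius inversion together with the bound on the number of $F$ of degree $r$ divisible by a given polynomial.
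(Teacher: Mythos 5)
Your overall skeleton (Vaughan decomposition, Type~I via completion and Weil, Type~II via a moment estimate whose modulus-average is better than the individual bound) matches the paper's strategy, but the Type~II step as you describe it does not work and would not produce the stated exponents. A single Cauchy--Schwarz in the long variable produces a sum over \emph{pairs} in the short variable, not quadruples satisfying $y_1z_1\equiv y_2z_2\Mod{F}$; the multiplicative-energy count you write down would arise from orthogonality in $a$, i.e.\ precisely the averaging over $a$ that you correctly rule out. Moreover, a second-moment (Cauchy--Schwarz) argument can only yield exponents with denominators $2$ or $4$; the term $q^{r/6+13n/18}$ forces a sixth-moment estimate. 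The mechanism the paper uses (Lemma \ref{lem:garaev_average}) is: apply H\"older with exponent $2k$ in the \emph{short} variable, complete that variable to a full residue system by positivity, and use orthogonality. Since $a$ is invertible, multiplication by $a$ permutes residues, so the resulting quantity is the $a$-independent additive energy $E_{F,k}^{\inv}$ of the set of modular inverses of the long variable --- this, not a dualising weight $\lambda_F$ inserted before Cauchy--Schwarz, is what makes $\max_a$ harmless. One then bounds $\sum_{\deg F=r}E_{F,k}^{\inv}(n)$ by Lemma \ref{lem:add_energy_average} (clear denominators; either the polynomial vanishes identically, handled by point counting, or $F$ divides a fixed nonzero polynomial, handled by the divisor bound), and takes $k=2$ and $k=3$ in different ranges of the Vaughan parameter. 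Your sketch, which only ever invokes Cauchy--Schwarz, cannot reach $13n/18+r/6$.

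A second genuine gap: the maximum in the statement is over all $a\in\F_q[T]$, not over $a$ coprime to $F$. When $d=(a,F)$ is nontrivial one has $e_F(a\ov{x})=e_{F_0}(a_0\ov{x})$ with $F_0=F/d$, so every Weil-type saving degrades from $q^{r/2}$ to $q^{\deg F_0/2}$ and the Type~I/Type~II analysis fails when $\deg F_0$ is small. The paper handles this by extracting $d$, reducing to coprime $a$ modulo $F_0$, and invoking the unconditional GRH-quality bound for $\sum_{x\equiv b\Mod{F_0}}\mu(x)\ll_\epsilon q^{n(1/2+\epsilon)}$ (Lemma \ref{lem:mobius_bound_smallr0} via Corollary \ref{cor:mobius_arith}) when $\deg F_0\le r-2n/5$; this regime contributes the $q^{r+9n/10}$ term, which you instead attribute to the Type~I analysis. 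Your proposal does not address non-coprime $a$ at all, and without this reduction the claimed uniformity in $a$ is not established.
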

This is again non-trivial when $n > 3r/4$, with a savings of $q^{r/10}$ over the trivial bound when $n \approx r$.

\subsection{Level of distribution of irreducible polynomials}\label{sec:apps_irreds}

The main application in \cite{sawinshusterman} of the sums considered in the previous section is to obtain a level of distribution beyond $1/2$ for irreducible polynomials in arithmetic progressions. In particular, that means non-trivial bounds for
\begin{align*}
    \Bigg{|}\sum_{\substack{x \in \cM_n \\ x \equiv a \Mod{F}}}\Lambda(x) - \frac{q^n}{\phi(F)}\Bigg{|}
\end{align*}
when $n > r/2$. The start of \cite[section 17.1]{IwaniecKowalski2004} gives a good background on this problem over $\mathbb{Z}$, but in summary it is a classical problem in number theory to show 
\begin{align}\label{eq:vonmangoldt_asymptotic}
    \sum_{\substack{x \in \cM_n \\ x \equiv a \Mod{F}}}\Lambda(x) \sim \frac{q^n}{\phi(F)} \text{ uniformly for } r \leq n \omega
\end{align}
for $\omega < 1$ as large as possible. The strongest analogous results over $\mathbb{Z}$ only imply that under the assumption of GRH, \eqref{eq:vonmangoldt_asymptotic} holds for $\omega < 1/2$, although it is conjectured that this should hold for any $\omega < 1$; again, see \cite[section 17.1]{IwaniecKowalski2004}. 

In $\F_q[T]$, Sawin and Shusterman \cite[Theorem 1.9]{sawinshusterman} move beyond this barrier of $1/2$ for square-free modulus $F$  by showing (for sufficiently large but fixed $q$ in terms of $\omega$ and $p$) that 
\begin{align}\label{eq:ss_og_vonmangoldt}
    \eqref{eq:vonmangoldt_asymptotic} \text{ holds for any } \omega < 1/2 + 1/126 \text{ and square-free } F.
\end{align}

Sawin subsequently gives another ground-breaking improvement in \cite[Theorem 1.2]{Sawin2023} to achieve the conjectured value of $\omega$ for square-free modulus, by showing (for sufficiently large but fixed $q$ in terms of only $\omega$) that 
\begin{align}\label{eq:s_top_vonmonagoldt}
   \eqref{eq:vonmangoldt_asymptotic} \text{ holds for any } \omega < 1 \text{ and square-free } F.
\end{align}

Again one may ask whether we can move past the barrier of $\omega < 1/2$ for arbitrary modulus. The methods used to show \eqref{eq:s_top_vonmonagoldt} are very specialized to square-free modulus, and it is probably infeasible to make these work more generally. But, by inserting our Theorem \ref{thm:mobius_3/4} into the proof of \eqref{eq:ss_og_vonmangoldt}, we have the following. 
\begin{thm}\label{cor:vonmangoldt}
Fix $\omega < 1/2 + 1/62$, and suppose
$$q > 
p^2e^2\left(\frac{16-\omega}{16-31\omega} \right)^2.$$
Then for any coprime $a,F \in \F_q[T]$ with $\deg F = r$, and any positive integer $n$ satisfying $r \leq \omega n$ we have 
$$\sum_{\substack{x \in \cM_n \\ f \equiv a \Mod{F}}}\Lambda(x) - \frac{q^n}{\phi(F)} \ll_{\omega} q^{n-r(1+\delta)}$$
for some $\delta = \delta(\omega) > 0$. 
\end{thm}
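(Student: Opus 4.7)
The strategy is to slot Theorem \ref{thm:mobius_3/4} into the argument for \eqref{eq:ss_og_vonmangoldt} given in \cite{sawinshusterman} and re-optimize the parameters. First I would detect the congruence $x\equiv a\Mod F$ via the additive characters of $\F_q[T]/\langle F\rangle$. Grouping characters by their conductor and isolating the trivial-character contribution extracts the main term $q^n/\phi(F)$, and one is reduced to bounding, uniformly over proper divisors $d\mid F$ and over $b$ with $(b,F/d)=1$, linear exponential sums
\[
\sum_{x\in\cM_n}\Lambda(x)\, e_{F/d}(bx).
\]

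Next I would apply Vaughan's identity for $\Lambda$ in $\F_q[T]$ to decompose these into Type I and Type II pieces. The Type I pieces are controlled by the Riemann Hypothesis for $L$-functions over $\F_q[T]$ together with standard complete-sum estimates; this is precisely the step at which the explicit restriction on $q$ enters, in direct parallel with the treatment in \cite{sawinshusterman}. The Type II (bilinear) pieces, after the duality transformation carried out in \cite{sawinshusterman} that converts exponential sums of the form $e_F(b x)$ into Kloosterman-type expressions $e_F(a\overline{x})$, reduce to objects of the shape $\cW_{F,a}(m,n;\balpha,\bbeta)$ and to linear sums $\sum\mu(x) e_F(a\overline{x})$. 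These are controlled exactly by our Theorems \ref{thm:bilinear_savings} and \ref{thm:mobius_3/4}. Crucially, both of these inputs are valid for arbitrary composite $F$, which is what allows us to lift the square-freeness hypothesis of \cite[Theorem 1.9]{sawinshusterman}.

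The quantitative improvement from $1/126$ to $1/62$ comes directly from the sharper linear $\mu$-sum bound. At $n\approx r$, the estimate \eqref{eq:SS_1/32} used in \cite{sawinshusterman} saves $q^{r/32}$ over the trivial bound, whereas Theorem \ref{thm:mobius_3/4} saves $q^{r/16}$. Tracking this factor of two through the Vaughan split doubles the admissible exponent beyond $1/2$, and solving the resulting constraint on $q$ produces the ratio $(16-\omega)/(16-31\omega)$ appearing in the statement — it is exactly the inequality expressing that the Type II savings dominate the worst Type I error at the optimized split.

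The main obstacle is verifying that no step of the \cite{sawinshusterman} argument silently uses square-freeness of $F$ outside the invocation of \eqref{eq:SS_1/32}. The passage to bilinear Kloosterman sums and the duality transformation are geometric/combinatorial in nature and go through for arbitrary $F$ once the analytic inputs are available in that generality; the remaining delicate point is to dispose of the contribution of $x\in\cM_n$ with $(x,F)\neq 1$, but any such $x$ must be a power of a prime divisor of $F$ and so contributes only $O(nr)$ in total, which is absorbed into the error term for $\omega<1$.
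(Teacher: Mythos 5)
There is a genuine gap, and it is in your very first reduction. If you detect the congruence $x\equiv a\Mod{F}$ by additive characters at the level of the $\Lambda$-sum, you are left with linear exponential sums $\sum_{x\in\cM_n}\Lambda(x)e_{F}(bx)$. These are governed by the Riemann Hypothesis for $L$-functions over $\F_q[T]$ (via Gauss sums and Dirichlet characters) and give exactly the level of distribution $1/2$ and nothing more; there is no ``duality transformation'' that converts $e_F(bx)$ into $e_F(a\ov{x})$, and no Kloosterman structure ever appears along that route. The actual argument (following \cite[equation (5.9)]{sawinshusterman}) opens the convolution $\Lambda=\mu*\deg$ \emph{first}, writing the sum as $\sum_k k\sum_{x\in\cM_k}\mu(x)\sum_{y\in\cM_{r+d-k},\,xy\equiv a\Mod{F}}1$; the congruence then sits on the product $xy$, and it is the completion of the sum over the full-interval variable $y$ (i.e.\ $y\equiv a\ov{x}\Mod F$) that produces the twisted sums $\sum_{\deg h<k-d}\sum_x\mu(x)e_F(ah\ov{x})$ to which Theorem \ref{thm:mobius_3/4} applies. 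Your Vaughan/Type~I--Type~II decomposition of $\sum\Lambda(x)e_F(bx)$ is the wrong split: Vaughan's identity is used in this paper only inside the proof of Theorem \ref{thm:mobius_3/4} itself, not at the level of this theorem.

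You have also misattributed the source of the hypothesis on $q$. Type~I estimates cost nothing here --- GRH is unconditional in $\F_q[T]$ and imposes no lower bound on $q$. The restriction $q>p^2e^2\bigl((16-\omega)/(16-31\omega)\bigr)^2$ comes entirely from the range $k>r(1+\epsilon)$, where the Kloosterman-sum bound is not the right tool and one must instead invoke Lemma \ref{lem:sawinshusterman_mobius_dist} (Sawin--Shusterman's equidistribution of $\mu$ in progressions to moduli of degree slightly below $k$), whose hypothesis $q>\bigl(pe(\epsilon+2)/\epsilon\bigr)^{2/(1-2\beta)}$ is what forces $q$ large in terms of $p$. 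The exponent $1/62$ and the ratio $(16-\omega)/(16-31\omega)$ then fall out of choosing $\epsilon=\tfrac{16}{15}\bigl(\tfrac1{16}-\omega'-2\theta\bigr)$ so that the two regimes $k\le r(1+\epsilon)$ (handled by Theorem \ref{thm:mobius_3/4}, which saves $q^{r/16}$ at $k\approx r$) and $k>r(1+\epsilon)$ meet; your heuristic that doubling the saving from $q^{r/32}$ to $q^{r/16}$ roughly doubles the admissible excess over $1/2$ is correct, but the mechanism is this matching of regimes, not a Type~II-versus-Type~I comparison. Finally, the issue of $x$ with $(x,F)\neq1$ you raise at the end does not arise, since $(a,F)=1$ and $x\equiv a\Mod F$ already force $(x,F)=1$.
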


While the holds for arbitrary modulus $F$, we do note that for square-free modulus Sawin's result \cite[Theorem 1.2]{Sawin2023} always gives a more relaxed condition on $q$. 

We can also use Theorem \ref{thm:irving} to do better on average. That is, when considering an analogue of the Bombieri-Vinogradov Theorem.

\begin{thm}\label{thm:bombierivinogradov}
    Fix $\omega < 1/2 + 1/38$, and suppose 
    $$q > p^2e^2\left(\frac{10-\omega}{10-19\omega} \right)^2.$$
    Then for any positive integers $R$ and $n$ satisfying $R \leq \omega n$ we have 
    \begin{align*}
            \sum_{\deg F < R}\max_{(a,F) = 1}\Bigg{|}\sum_{\substack{x \in \cM_n \\ x \equiv a \Mod{F}}}\Lambda(x) - \frac{q^n}{\phi(F)}\Bigg{|} \ll_{\omega} q^{n-R\delta}
    \end{align*}
for some $\delta = \delta(\omega) > 0$. 
\end{thm}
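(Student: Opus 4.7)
The plan is to adapt the proof of Theorem \ref{cor:vonmangoldt}, which itself follows the framework of \cite{sawinshusterman}, to the averaged setting. The key observation is that because we sum over $\deg F<R$ and take $\max_a$ inside, we may invoke the averaged bilinear Kloosterman estimate Theorem \ref{thm:irving}, with its saving of roughly $q^{r/10}$ when $n\approx r$, in place of the individual estimate Theorem \ref{thm:mobius_3/4} with its saving of only $q^{r/16}$. It is precisely this improvement from $1/16$ to $1/10$ that transforms the exponent $1/62$ of Theorem \ref{cor:vonmangoldt} into the exponent $1/38$, and relaxes the lower bound on $q$ from $((16-\omega)/(16-31\omega))^2$ to $((10-\omega)/(10-19\omega))^2$.

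First I would split the sum by $\deg F$. For $\deg F$ up to roughly $n/2$, the Riemann Hypothesis for Dirichlet $L$-functions over $\F_q[T]$ yields $|\sum_{x\in\cM_n,\,x\equiv a\pmod F}\Lambda(x)-q^n/\phi(F)|\ll nq^{n/2}$, which summed trivially over such $F$ already produces the desired estimate in that range. It is only the range $n/2\lesssim\deg F<R$ that demands genuine Kloosterman input.

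For the remaining range I would apply Vaughan's identity in $\F_q[T]$ to decompose $\Lambda$ as the sum of a trivial piece (from prime powers of small degree), a Type~I convolution (with one factor supported on polynomials of small degree) and a Type~II bilinear convolution whose inner factor is $\mu$. The Type~I contribution is dispatched by Dirichlet character sums and GRH in $\F_q[T]$. For the Type~II contribution, detecting the progression $x\equiv a\pmod F$ through the equivalence $\bar x\equiv\bar a\pmod F$ valid for $(x,F)=1$, and expanding $\mathbf 1_{\bar x\equiv\bar a\pmod F}$ in additive characters modulo $F$, isolates an inner $\mu$-weighted sum of the shape $\sum_{\deg c<m,\,(c,F)=1}\mu(c)e_F(h\bar c)$, where $h$ depends on the outer Vaughan variable and on $a$. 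The $\max_a$ inside Theorem \ref{thm:irving} absorbs this dependence uniformly, so after dyadically summing over $\deg F$ we may feed Theorem \ref{thm:irving} directly into the inner double sum in $c$ and $F$.

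Finally I would optimise the Vaughan cutoff against the three regimes in Theorem \ref{thm:irving}. The term $q^{9n/10}$ is what drives the saving at $r\approx n$ and is responsible for the exponent $1/38$; the remaining terms $q^{r/6+13n/18}$ and $q^{13n/8-5r/6}$ must be checked to be subdominant in the range $R\leq\omega n$ with $\omega<1/2+1/38$. The hypothesis $q>p^2e^2((10-\omega)/(10-19\omega))^2$ emerges from ensuring that the combinatorial losses by factors of $p$ (from Mangoldt/Mobius inversions) and $e$ (from the number of terms produced by the Vaughan iteration), exactly as in Theorem \ref{cor:vonmangoldt}, remain absorbable by the $q^{r/10}$ saving. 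The main obstacle is this last numerical balancing, since the final exponent $1/38$ and the precise shape of the constant $(10-\omega)/(10-19\omega)$ are quite sensitive to the simultaneous interaction of the Vaughan parameter, the range of $\deg F$, and the three competing terms in Theorem \ref{thm:irving}.
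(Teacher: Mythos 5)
Your high-level plan --- rerun the proof of Theorem \ref{cor:vonmangoldt} with the averaged bound of Theorem \ref{thm:irving} in place of Theorem \ref{thm:mobius_3/4}, so that the saving $q^{r/10}$ replaces $q^{r/16}$, $1/62$ becomes $1/38$, and the constant relaxes accordingly --- is exactly the paper's strategy, and you correctly identify the term $q^{9n/10}$ as the one driving the exponent. But your execution has a genuine gap in the range where the M\"obius variable is long relative to $\deg F$. The paper does not use Vaughan's identity here at all: it expands $\Lambda$ through the exact convolution identity behind \cite[eq.\ (5.9)]{sawinshusterman}, producing sums $\sum_k k\sum_{x\in\cM_k}\mu(x)\sum_y 1$, and then splits on the degree $k$ of the M\"obius variable into three ranges: $k\le d_r$ (main term, via Lemma \ref{lem:sawinshusterman_mainterm}), $d_r<k\le r(1+\epsilon)$ (the genuine Kloosterman range, via Theorem \ref{thm:irving}), and $k>r(1+\epsilon)$. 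In that last range you propose to finish with ``Dirichlet character sums and GRH'', but GRH (Corollary \ref{cor:mobius_arith}) only gives $\sum_{x\in\cM_k,\,x\equiv b\,(F)}\mu(x)\ll q^{k(1/2+\epsilon)}$, which is \emph{weaker} than the trivial count $q^{k-r}$ precisely when $k$ is comparable to $r$: for $r\approx n/2$ and $k$ just above $r$, feeding the GRH bound into this range yields roughly $q^{n/2+3r/2}\approx q^{5n/4}$, far above the target $q^{n-R\delta}$. What is actually needed there is Lemma \ref{lem:sawinshusterman_mobius_dist} (the Sawin--Shusterman M\"obius equidistribution theorem), which saves a power of the number of terms $q^{k-r}$ rather than of $q^{k}$; that lemma is the true source of the hypothesis $q>p^2e^2\bigl((10-\omega)/(10-19\omega)\bigr)^2$, which you instead misattribute to combinatorial losses in the Vaughan iteration. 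Without this input, no improvement of the Kloosterman-sum estimates gets you past level $1/2$.

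Two smaller points. First, summing the termwise GRH error $\ll nq^{n/2}$ over the $\asymp q^{n/2}$ moduli of degree up to $n/2$ gives $\asymp nq^{n}$, not $q^{n-R\delta}$; the trivial summation only disposes of $\deg F\le n(1/2-c\delta)$, which is harmless but not what you wrote. Second, the paper must (and does) treat the small-$r$ and large-$r$ parts of the long-variable range separately ($r<R/3$ versus $r\ge R/3$), since the saving from Lemma \ref{lem:sawinshusterman_mobius_dist} is measured in $k-r$ and one needs $k-r\gg R$ uniformly; this bookkeeping is absent from your sketch.
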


 \section{Preliminaries}
 Throughout this section, $F$ always denotes an arbitrary polynomial of degree $r$, and $\epsilon > 0$ is always some small positive constant.

As a general preliminary, we will repeatedly make use of the following from \cite[Lemma 1]{CS2013}. 

\begin{lem}\label{lem:divisors}
    The number of divisors of any $x \in \F_q[T]$ is $O_{\epsilon}(q^{\epsilon\deg x})$.
\end{lem}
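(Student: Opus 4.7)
The plan is to mimic the classical divisor bound $d(n) \ll_\epsilon n^\epsilon$ for integers, using the fact that the divisor function is multiplicative and that a polynomial of degree $d$ has at most a bounded number of "small" prime factors. I would begin by writing the monic part of $x$ as $\prod_{i=1}^k P_i^{e_i}$ with the $P_i$ distinct monic irreducibles, so that the number of divisors of $x$ equals $q \cdot \prod_{i=1}^k (e_i+1)$ (the factor $q$ accounting for unit multiples, and absorbed into the implied constant), while $\deg x = \sum_i e_i \deg P_i$. The problem thus reduces to proving
\[
\prod_{i=1}^k (e_i + 1) \;\leq\; C_\epsilon \, q^{\epsilon \sum_i e_i \deg P_i}.
\]

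The key step is to split the prime factors according to their degree. Fix a threshold $B = B(\epsilon) = \lceil (\log 2)/(\epsilon \log q)\rceil$. For each "large" prime $P_i$ with $\deg P_i \geq B$ we have $q^{\epsilon \deg P_i} \geq 2$, and since $e+1 \leq 2^e$ for every $e \geq 1$, this forces
\[
e_i + 1 \;\leq\; 2^{e_i} \;\leq\; q^{\epsilon e_i \deg P_i}.
\]
Multiplying these inequalities over all large prime factors gives a contribution to the product that is bounded by $q^{\epsilon \deg x}$ exactly as needed.

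It remains to handle the "small" primes, those with $\deg P_i < B$. The total number of such monic irreducibles is $N_\epsilon \leq \sum_{d<B} q^d/d$, a constant depending only on $\epsilon$ (and the fixed $q$). For each small prime, $e_i + 1 \leq \deg x + 1$ trivially, so the product over small primes is at most $(\deg x + 1)^{N_\epsilon}$. This is only polynomial in $\deg x$, hence grows slower than $q^{\eta \deg x}$ for any $\eta > 0$. To produce a clean bound, I would apply the degree-splitting argument above with $\epsilon/2$ in place of $\epsilon$, so that the large-prime contribution is at most $q^{(\epsilon/2)\deg x}$ and the small-prime contribution $(\deg x+1)^{N_{\epsilon/2}}$ is swallowed by the remaining $q^{(\epsilon/2)\deg x}$ up to an $\epsilon$-dependent constant.

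The main (and only) mild obstacle is formalizing the absorption of the polynomial factor $(\deg x+1)^{N_\epsilon}$ into $q^{\epsilon \deg x}$; this is handled by treating bounded $\deg x$ separately (where the entire divisor count is itself bounded by a constant depending on $\epsilon$) and using $x^N = o(q^{\eta x})$ as $x \to \infty$ for the tail. No delicate arithmetic or analytic input is required beyond the multiplicativity of the divisor function and the elementary bound on the number of irreducibles of small degree.
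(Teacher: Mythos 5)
Your proof is correct. Note that the paper does not actually prove this lemma; it simply cites it as \cite[Lemma 1]{CS2013}, so there is no in-paper argument to compare against. Your write-up is the standard Wigert-type divisor bound transplanted to $\F_q[T]$: split the irreducible factors at a degree threshold $B(\epsilon)$, use $e+1\leq 2^{e}\leq q^{\epsilon e\deg P}$ for the large-degree primes, and absorb the boundedly many small-degree primes (at most $\sum_{d<B}q^{d}/d$ of them, each contributing a factor $\leq \deg x+1$) into $q^{(\epsilon/2)\deg x}$. All the steps check out; the only cosmetic slip is that the unit multiples contribute a factor $q-1$ rather than $q$, which is irrelevant since it is absorbed into the implied constant anyway.
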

 \subsection{Sums involving the M{\"o}bius function}
We will need a number of results regarding cancellations in sums of the M{\"o}bius function. First, we recall the following elementary result from \cite[Chapter 2, Ex 12]{Rosen2013}. 
\begin{lem}\label{lem:Rosen}
    For any positive integer $n$, 
    $$\sum_{\substack{x \in \cM_n}}\mu(x) = 
    \begin{cases}
        -q, &n=1,\\
        0, &n > 1.
    \end{cases}$$
\end{lem}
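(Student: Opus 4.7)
The plan is to derive this from the standard Möbius convolution identity $\sum_{d \mid x}\mu(d) = \mathbf{1}_{x=1}$, which carries over from $\Z$ to $\F_q[T]$ verbatim because the argument is purely multiplicative and $\F_q[T]$ is a UFD. For $n \geq 1$, no element of $\cM_n$ equals $1$, so
$$0 = \sum_{x \in \cM_n}\sum_{d \mid x}\mu(d).$$

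Next I would swap the order of summation and count. Every monic divisor $d$ of some $x \in \cM_n$ has $\deg d = k \leq n$, and for each such $d$ the number of $x \in \cM_n$ with $d \mid x$ equals $\#\cM_{n-k} = q^{n-k}$, since writing $x = dy$ forces $y$ to be monic of degree $n - k$. Setting $S_k := \sum_{d \in \cM_k}\mu(d)$, the display above becomes the linear recurrence
$$\sum_{k=0}^{n} q^{n-k} S_k = 0 \qquad (n \geq 1).$$

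The claimed values of $S_n$ then fall out by induction. Since $S_0 = \mu(1) = 1$, the $n = 1$ case gives $q S_0 + S_1 = 0$, so $S_1 = -q$. For $n = 2$ the recurrence reads $q^2 + q S_1 + S_2 = 0 = q^2 - q^2 + S_2$, yielding $S_2 = 0$; and for $n \geq 2$, assuming inductively $S_2 = \cdots = S_{n-1} = 0$, the recurrence collapses to $q^n + q^{n-1} S_1 + S_n = 0$, forcing $S_n = 0$.

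There is essentially no obstacle here: the only step requiring a brief justification is the count $\#\{x \in \cM_n : d \mid x\} = q^{n-\deg d}$, which uses unique factorization in $\F_q[T]$. As an alternative one-line argument, one can observe that the generating identity $\sum_{x \in \cM}u^{\deg x} = \sum_{n \geq 0}q^n u^n = 1/(1-qu)$ combined with Möbius inversion yields $\sum_{x \in \cM}\mu(x)u^{\deg x} = 1 - qu$, and the lemma is immediate by reading off the coefficient of $u^n$.
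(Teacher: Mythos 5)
Your proof is correct. The paper gives no argument of its own here --- it simply cites this as an exercise from Rosen's \emph{Number Theory in Function Fields} --- and both of your arguments (the convolution-and-recurrence computation and the one-line generating-function identity $\sum_{x\in\cM}\mu(x)u^{\deg x} = (1-qu)^{-1\cdot(-1)} = 1-qu$ coming from $\zeta(u)=1/(1-qu)$) are the standard routes to this fact, so there is nothing to add.
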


The next result is found in \cite[Theorem 2]{BhowmickLeLiu2017}. We observe that there is a mistake in the statement of this result in \cite{BhowmickLeLiu2017}, but it is correct as stated here (see the discussion in Section 4.5 of \cite{Bagshaw2023_PAMS}).  
\begin{lem}\label{lem:BLL}
    Suppose 
    $$r \geq 10^4 \text{ and }\frac{\log r}{\log \log r} \geq \log q.$$
    Let $\chi$ denote a non-principal character modulo $F$. Then for any positive integer $n$
    $$\Big{|}\sum_{\substack{x \in \cM_n}}\mu(x)\chi(x) \Big{|} \leq q^{\frac{n}{2} + \frac{n\log\log r}{\log r} + 8q\frac{r}{\log^2r}\log_qe}.$$
\end{lem}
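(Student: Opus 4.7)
The plan is to invoke the function field Riemann Hypothesis. For a non-principal character $\chi$ modulo $F$, the Dirichlet L-function
$$L(u, \chi) = \sum_{x \in \cM} \chi(x) u^{\deg x}$$
is a polynomial in $u$ of degree $d \le r-1$ which, by Weil's theorem, factors as $L(u, \chi) = \prod_{i=1}^{d} (1 - \alpha_i u)$ with each reciprocal root satisfying $|\alpha_i| \le q^{1/2}$. Möbius inversion gives the generating function identity
$$\sum_{n \ge 0} S_n u^n \;=\; \frac{1}{L(u,\chi)}, \qquad S_n := \sum_{x \in \cM_n} \mu(x)\chi(x),$$
so the task reduces to extracting and bounding the coefficient $[u^n]\, 1/L(u,\chi)$.

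A direct route is to expand $1/L(u,\chi) = \prod_i \sum_{k \ge 0}(\alpha_i u)^k$, identifying $S_n$ with the complete homogeneous symmetric polynomial $h_n(\alpha_1,\ldots,\alpha_d)$ evaluated at the reciprocal roots. Bounding term-by-term via $|\alpha_i| \le q^{1/2}$ yields
$$|S_n| \;\le\; q^{n/2}\binom{n+d-1}{d-1} \;\le\; q^{n/2}\binom{n+r-2}{r-2},$$
which is already of the shape $q^{n/2+o(n)}$ when $r$ is large, but the combinatorial factor is too crude for the stated explicit constants. An equivalent Perron/Cauchy contour-integral approach on a circle just outside $|u|=q^{-1/2}$ runs into the symmetric obstacle of producing a uniform lower bound for $|L(u,\chi)|$ off the critical circle when the zeros cluster.

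To reach the precise error terms in the statement, I would pass through the logarithmic expansion
$$-\log L(u,\chi) \;=\; \sum_{k \ge 1} \frac{\sigma_k}{k}\, u^k, \qquad \sigma_k := -\sum_{i=1}^{d}\alpha_i^k, \qquad |\sigma_k| \le (r-1)q^{k/2},$$
and recover the coefficients of $1/L(u,\chi) = \exp(-\log L(u,\chi))$ by expanding the exponential into its Bell-polynomial form. Truncating this expansion at an optimized level $K$ of order $\log r/\log\log r$, and bounding the head via the Weil estimate on each $\sigma_k$, produces the $q^{n\log\log r/\log r}$ factor, while the tail for $k > K$ is controlled by a Mertens-type prime-counting estimate in $\F_q[T]$ and contributes the $q^{8qr\log_q e/\log^2 r}$ term. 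The hypothesis $\log r/\log\log r \ge \log q$ is exactly what is needed to make this split effective, so that both the head and tail remain below the main $q^{n/2}$ term. The principal obstacle is carrying out this optimization with explicit constants, which constitutes the substance of the argument in \cite{BhowmickLeLiu2017}; in our setting it suffices to invoke their result directly.
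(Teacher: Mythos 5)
The paper gives no proof of this lemma: it is quoted verbatim from \cite[Theorem 2]{BhowmickLeLiu2017} (with a remark that the statement there contains a misprint, corrected in the form given), and your proposal likewise concludes by invoking that result directly, so the two approaches coincide. Your sketch of the internal argument --- Weil's bound $|\alpha_i|\le q^{1/2}$ on the inverse zeros of $L(u,\chi)$, the logarithmic expansion of $1/L(u,\chi)$, and truncation at $K$ of order $\log r/\log\log r$ --- is a reasonable outline of what the cited proof does, but it is neither carried out with the explicit constants nor required here.
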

We will also make use of the following from \cite{H2020}.
\begin{lem}\label{lem:Han}
    Let $\chi$ denote a non-principal character modulo $F$. Then for any positive integer $n$
    $$\Big{|}\sum_{\substack{x \in \cM_n}}\mu(x)\chi(x)\Big{|}  \leq q^{n/2}\binom{n+r-2}{n}.$$
\end{lem}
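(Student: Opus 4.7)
The plan is to deduce this bound from the analytic properties of the Dirichlet $L$-function attached to $\chi$. Setting $u = q^{-s}$, one works with the formal power series
\[ L(u, \chi) = \sum_{x \in \cM}\chi(x)u^{\deg x} = \prod_{P \in \cP}\bigl(1 - \chi(P)u^{\deg P}\bigr)^{-1}, \]
and M\"obius inversion of the Euler product gives
\[ \frac{1}{L(u, \chi)} = \sum_{x \in \cM}\mu(x)\chi(x)u^{\deg x} = \sum_{n=0}^{\infty}\Bigl(\sum_{x \in \cM_n}\mu(x)\chi(x)\Bigr)u^n. \]
Thus the sum we want to bound is precisely the coefficient of $u^n$ in the formal power series $1/L(u,\chi)$.

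The second step is to invoke the classical fact that, because $\chi$ is non-principal modulo $F$ of degree $r$, $L(u, \chi)$ is actually a polynomial in $u$ of some degree $d \leq r - 1$. When $\chi$ is primitive with conductor $F$ itself this degree is exactly $r - 1$; for imprimitive $\chi$ induced by some primitive $\chi^*$ of conductor $F^* \mid F$, one uses the factorisation $L(u,\chi) = L(u, \chi^*)\prod_{P \mid F,\, P \nmid F^*}(1 - \chi^*(P)u^{\deg P})$ and checks that the total degree is at most $(\deg F^* - 1) + (r - \deg F^*) = r - 1$. Weil's Riemann Hypothesis for curves then provides the factorisation
\[ L(u, \chi) = \prod_{i=1}^{d}(1 - \alpha_i u), \qquad |\alpha_i| \leq q^{1/2}. \]

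The last step is a direct coefficient extraction. Expanding
\[ \frac{1}{L(u, \chi)} = \prod_{i=1}^{d}\sum_{k=0}^{\infty}\alpha_i^k u^k, \]
the coefficient of $u^n$ equals $\sum_{k_1 + \cdots + k_d = n}\alpha_1^{k_1}\cdots \alpha_d^{k_d}$. The triangle inequality with $|\alpha_i| \leq q^{1/2}$ bounds each summand by $q^{n/2}$, and a stars-and-bars count shows there are $\binom{n + d - 1}{n} \leq \binom{n + r - 2}{n}$ such tuples, yielding the claimed estimate. All ingredients here are entirely classical, so I do not anticipate any serious obstacle; the one point requiring a touch of care is tracking the polynomial degree of $L(u,\chi)$ uniformly across all non-principal $\chi$ modulo $F$, which is handled by the inducing-character reduction described above.
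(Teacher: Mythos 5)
Your proof is correct, and it is the standard argument: the paper itself gives no proof of this lemma (it is quoted directly from the cited reference), and the route you take — identifying $\sum_{x\in\cM_n}\mu(x)\chi(x)$ as the $n$-th coefficient of $1/L(u,\chi)$, using that $L(u,\chi)=\prod_{i=1}^{d}(1-\alpha_i u)$ with $d\le r-1$ and $|\alpha_i|\le q^{1/2}$ by Weil, and then counting the $\binom{n+d-1}{n}$ compositions — is exactly how this bound is obtained in the literature. The only point worth noting is that the degree bound $d\le r-1$ follows directly from $\sum_{x\in\cM_n}\chi(x)=0$ for $n\ge r$ (each residue class mod $F$ is hit equally often), so the primitive/imprimitive reduction is needed only to invoke Weil's bound on the inverse roots, and your handling of that reduction is fine.
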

The previous two results can be combined and simplified for our purposes. This is classical in the literature, but we include brief details for completeness.
\begin{cor}\label{cor:Han+BLL}
    For any positive integer $n \geq r$ and any non-principal character $\chi$ modulo $F$ we have 
    $$\sum_{\substack{\deg x < n}}\mu(x)\chi(x) \ll_{\epsilon} q^{n(1/2 + \epsilon)}.$$
\end{cor}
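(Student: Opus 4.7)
The plan is to split the sum by the degree of $x$, reduce to sums over monic polynomials, and then apply either Lemma~\ref{lem:Han} or Lemma~\ref{lem:BLL} according to the relative sizes of $q$, $r$, and $\epsilon$.

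First, since $\mu(cx)=\mu(x)$ for $c\in\F_q^*$ (the definition of $\mu$ absorbs leading constants) and $\chi(cx)=\chi(c)\chi(x)$, splitting $x$ by its degree $k$ and its leading coefficient $c$ gives
$$\Bigl|\sum_{\deg x < n}\mu(x)\chi(x)\Bigr| \;=\; \Bigl|\sum_{c\in\F_q^*}\chi(c)\Bigr|\cdot\Bigl|\sum_{k=0}^{n-1}\sum_{x\in\cM_k}\mu(x)\chi(x)\Bigr| \;\leq\; (q-1)\sum_{k=0}^{n-1}\Bigl|\sum_{x\in\cM_k}\mu(x)\chi(x)\Bigr|.$$
It therefore suffices to bound each inner sum by $q^{k(1/2+\epsilon/2)}$ up to an $\epsilon$-dependent constant.

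Next, I would fix a threshold $Q_0=Q_0(\epsilon)$ (of order $4^{2/\epsilon}$) and argue in two cases. If $q\geq Q_0$, apply Lemma~\ref{lem:Han} together with the crude estimate $\binom{k+r-2}{k}\leq 2^{k+r-2}$; summing the resulting geometric progression in $k$ with ratio $2q^{1/2}$ yields a total of order $2^{n+r}q^{n/2}$, which, using $n\geq r$ and $q\geq Q_0$, is at most $q^{n(1/2+\epsilon)}$. If instead $q<Q_0$, then $q$ is bounded in terms of $\epsilon$, so for $r$ larger than some $r_0(\epsilon)$ the hypotheses $r\geq 10^4$ and $\log r/\log\log r\geq \log q$ of Lemma~\ref{lem:BLL} are met, and then, using $n\geq r$ together with $q$ bounded, both error terms $k\log\log r/\log r$ and $8qr\log_q e/\log^2 r$ are $\leq n\epsilon/4$. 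For the residual range where both $q$ and $r$ are bounded by functions of $\epsilon$, Lemma~\ref{lem:Han} gives $q^{k/2}$ times a polynomial in $k$ of bounded degree, and the polynomial factor is swallowed by $\ll_{\epsilon}$.

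The main obstacle is verifying the bounded-$q$ case: specifically, that the BLL error term $8qr\log_q e/\log^2 r$ is $\leq n\epsilon/4$. Since $q$ is bounded in terms of $\epsilon$ and $n\geq r$, this reduces to showing $r/\log^2 r = o(r)$, which holds for all $r$ sufficiently large in terms of $\epsilon$; enlarging $r_0(\epsilon)$ if necessary handles this. All other steps are routine estimates of geometric progressions, and the $\epsilon$-dependence of the implicit constant absorbs boundary effects.
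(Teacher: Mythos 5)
Your proof is correct, and it reaches the bound by a case division that is genuinely different from --- and in fact more robust than --- the one in the paper. The paper first reduces to the worst single degree $t$ (so $t\geq n/2$ in the nontrivial range) and then chooses between Lemma~\ref{lem:Han} and Lemma~\ref{lem:BLL} according to whether $r<\log n$ or $r\geq\log n$; you instead choose according to whether $q$ exceeds a threshold $Q_0(\epsilon)$. Your division is the one that actually matches the two obstacles: in Lemma~\ref{lem:Han} the factor $\binom{k+r-2}{k}\leq 2^{k+r}\leq 4^{n}$ is at most $q^{n\epsilon/2}$ precisely when $q$ is large in terms of $\epsilon$, while in Lemma~\ref{lem:BLL} the error term $8qr\log_q e/\log^2 r$ is at most $n\epsilon/4$ (using $n\geq r$) precisely when $q$ is bounded in terms of $\epsilon$ and $r$ is large. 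In particular you verify the hypothesis $\log r/\log\log r\geq\log q$ of Lemma~\ref{lem:BLL}, and you cover the regime $r\geq\log n$ with $q$ large by falling back on Lemma~\ref{lem:Han}; the paper's proof addresses neither point, so your version is the more complete argument. One caveat, which you share with the paper: the factor $\bigl|\sum_{c\in\F_q^*}\chi(c)\bigr|\leq q-1$ coming from the leading coefficients (which the paper silently drops in its first display) is absorbed into $q^{n\epsilon}$ only once $n\gg_\epsilon 1$; for $n=O(1/\epsilon)$ with $q\to\infty$ this is a boundary defect of the statement itself rather than of your argument, and your exact factorization shows it is vacuous whenever $\chi$ is nontrivial on the constants, since the whole sum then vanishes.
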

\begin{proof}
    Let $S$ denote the sum in question. We split our sum into intervals depending on the degree of $x$ and write  
    $$S 
    \ll \sum_{i=0}^{n-1}\Bigg{|}\sum_{\substack{x \in \cM_n}}\mu(x)\chi(x)\Bigg{|}.  $$
    This implies there exists some integer $t < n$ such that 
    $$S \ll n\Bigg{|}\sum_{\substack{x \in \cM_t}}\mu(x)\chi(x)\Bigg{|}.$$
    First, if $t < n/2$ then the result follows trivially. So suppose $n/2 \leq t \leq n$. 
    If $r < \log n$ then by Lemma \ref{lem:Han}, 
    \begin{align*}
        S \ll nq^{t/2}\binom{t+\log n-2}{t} \ll_{\epsilon} q^{n(1/2 + \epsilon)}.
    \end{align*}
    Finally, if $r \geq \log n$ then since $t \geq n/2 \geq r/2$, Lemma \ref{lem:BLL} implies
    \begin{align*}
        S 
        \ll nq^{t(\frac{1}{2} + \frac{\log\log r}{\log r} + 8q\frac{r}{t\log^2r}\log_qe)}
        &\ll nq^{t(\frac{1}{2} + \frac{\log\log r}{\log r} + 16q\frac{1}{\log^2r}\log_qe)}\\
        &\ll_{\epsilon} q^{n(1/2 + \epsilon)}.
    \end{align*}
\end{proof}

This now implies the following, which is again well-known but we include details for completeness.
\begin{cor}\label{cor:mobius_arith}
    Let $a \in \F_q[T]$ with $\gcd(a,F) = 1$. Then for any positive integer $n$, 
    $$\sum_{\substack{\deg x < n \\ x \equiv a \Mod{F}}}\mu(x) \ll_{\epsilon} q^{n(1/2 + \epsilon)}.$$
\end{cor}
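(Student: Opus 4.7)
The plan is to apply orthogonality of Dirichlet characters modulo $F$ and then handle the principal character by M\"obius inversion combined with induction on $\deg F$. Since $(a,F)=1$ forces $(x,F)=1$ under the progression condition, orthogonality gives
\[
\sum_{\substack{\deg x<n\\ x\equiv a\Mod F}}\mu(x) \;=\; \frac{1}{\phi(F)}\sum_{\chi\Mod F}\overline{\chi(a)}\sum_{\deg x<n}\mu(x)\chi(x).
\]
If $n\le r$ the left-hand side has at most one term, so assume $n>r$. For each non-principal $\chi$, Corollary \ref{cor:Han+BLL} directly supplies $\ll_\epsilon q^{n(1/2+\epsilon)}$; summing over the $\phi(F)-1$ non-principal characters and dividing by $\phi(F)$ gives the desired contribution.

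The remaining task is to bound the principal-character term, i.e.\ to show $|\sum_{\deg x<n,(x,F)=1}\mu(x)|\ll_\epsilon\phi(F)q^{n(1/2+\epsilon)}$. I would prove this by strong induction on $\deg F$. Expanding $\mathbf{1}_{(x,F)=1}$ by M\"obius inversion and splitting each resulting restricted sum into residue classes modulo $d$ yields
\[
\sum_{\deg x<n,\,(x,F)=1}\mu(x) \;=\; \sum_{d\mid\mathrm{rad}(F)}\;\sum_{\substack{\deg a'<\deg d\\ (a',d)=1}}\;\sum_{\substack{\deg y<n-\deg d\\ y\equiv a'\Mod d}}\mu(y).
\]
Whenever $\deg d<\deg F$, the inductive hypothesis bounds the innermost sum by $\ll_\epsilon q^{(n-\deg d)(1/2+\epsilon)}$, uniformly in $a'$. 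Combined with Lemma \ref{lem:divisors} (so that $\tau(F)\ll_\epsilon q^{\epsilon r}$), the trivial inequality $\phi(d)\le q^{\deg d}$, and the Mertens-type bound $\phi(F)\gg_\epsilon q^{r(1-\epsilon)}$, the contribution of all divisors $d\mid\mathrm{rad}(F)$ with $d\neq F$ is then $\ll_\epsilon\phi(F)q^{n(1/2+\epsilon)}$.

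The main obstacle is the boundary term $d=F$, which appears only when $F$ is itself squarefree and for which the outer induction on $\deg F$ provides no direct bound. In that case the corresponding inner sum is $\sum_{\deg y<n-r,(y,F)=1}\mu(y)$, i.e.\ the same quantity with $n$ replaced by $n-r$. I would resolve this by a secondary induction on $n$: iterating the recursion produces a geometric-type sum over $k=0,1,\ldots$ whose terms are individually controlled by the proper-divisor bound above, and the resulting series converges. The base case $n-kr<r$ is trivial, since the sum then has at most $q^{n-kr}$ terms.
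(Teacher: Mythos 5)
Your proposal is correct, and its skeleton — orthogonality of characters modulo $F$, the trivial case $n\le r$, and Corollary \ref{cor:Han+BLL} for the non-principal characters — is exactly the paper's. Where you diverge is the principal character. The paper disposes of it in one line, asserting that it "contributes only $O(1)$ by Lemma \ref{lem:Rosen}"; you instead notice that $\sum_{\deg x<n}\mu(x)\chi_0(x)=\sum_{\deg x<n,(x,F)=1}\mu(x)$ carries a coprimality condition that Lemma \ref{lem:Rosen} does not address, and you build a M\"obius-inversion identity plus a double induction (on $\deg F$, and on $n$ for the squarefree boundary term $d=F$) to control it. That concern is legitimate — the coprime M\"obius sum is a signed count of $F$-smooth polynomials and is not literally covered by Lemma \ref{lem:Rosen} as stated — so your extra work is defensible rather than redundant, and the recursion $\sum_{\deg x<n,(x,F)=1}\mu(x)=\sum_{d\mid\mathrm{rad}(F)}\sum_{(a',d)=1}\sum_{y\equiv a'\,(d)}\mu(y)$ together with $\phi(d)\le q^{\deg d}$, $\tau(F)\ll_\epsilon q^{\epsilon r}$ and $\phi(F)\gg q^{r(1-\epsilon)}$ does close, with the iterated boundary term absorbed since each step gains a factor $q^{-R(1/2+\epsilon)}$. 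Two small points you should make explicit if you write this up: the outer induction quietly requires that the implied constant in the Corollary not degrade with $\deg F$, which works because the inductive contribution is damped by $\tau(F)q^{r/2}/\phi(F)\ll q^{-r/3}$ (with finitely many small $r$ absorbed into the constant); and the $d=1$ term of your identity should be read as the unrestricted sum $\sum_{\deg y<n}\mu(y)=O(q)$ from Lemma \ref{lem:Rosen}. The cost of your route is length; its benefit is that it actually justifies the step the paper waves through.
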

\begin{proof}
    Of course, if $n < r$ then this is trivial, so we assume otherwise. Using the orthogonality of multiplicative characters we may write
    \begin{align*}
        \sum_{\substack{\deg x < n \\ x \equiv a \Mod{F}}}\mu(x) 
        &= \frac{1}{\phi(F)}\sum_{\chi \Mod{F}}\ov{\chi(a)}\sum_{\deg x < n}\mu(x)\chi(x).
    \end{align*}
    The trivial character contributes only $O(1)$ by Lemma \ref{lem:Rosen}. To bound the rest, we can apply the triangle inequality and then Corollary \ref{cor:Han+BLL} to reach the desired result. 
\end{proof}

The following is a special case of \cite[Theorem 4.5]{sawinshusterman}, which significantly improves upon the previous result when $r$ is close to $n$ (with some restrictions on the size of $q$).

\begin{lem}\label{lem:sawinshusterman_mobius_dist}
    Let $\epsilon> 0$ and $0 < \beta < 1/2$, and suppose
    $$q > \left(\frac{\epsilon + 2}{\epsilon}~{pe}\right)^{\frac{2}{1-2\beta}}.$$
    Then for any non-negative integer $n \geq (1+\epsilon)r$ and any $a \in \F_q[T]$ coprime to $F$ we have
    $$\sum_{\substack{x \in \cM_n \\ x \equiv a \Mod{F} }}\mu(x) \ll_{\epsilon, \beta} q^{(n-r)(1-\beta/p)}. $$
\end{lem}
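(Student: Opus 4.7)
The strategy is to reduce the arithmetic progression sum to a M{\"o}bius sum over an affine slice of $\cM_n$ and then invoke function-field equidistribution.

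\emph{First attempt (elementary).} My initial instinct would be Vaughan's identity for $\mu$: decompose $\mu = \mu_{\leq V} + \mu_{> V}$ and split the sum into Type I contributions (of the form $\sum_{\deg b < V} \mu(b) \cdot \#\{d : bd \equiv a \Mod F\}$) and Type II bilinear contributions over $bd \equiv a \Mod F$. The Type I piece is handled by counting $d$ in residue classes (or via Corollary~\ref{cor:mobius_arith}). The Type II piece, after detecting the congruence via additive characters $e_F(\cdot)$ and substituting $d \leftrightarrow \bar d$, would reduce to bilinear Kloosterman sums of the shape handled by Theorem~\ref{thm:bilinear_savings}. The obstacle is that Theorem~\ref{thm:bilinear_savings} requires the shorter variable to have degree at least $r(1/4 + \epsilon)$, whereas for $n = (1+\epsilon)r$ the available ``budget'' for splitting is only $n - r = \epsilon r$. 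So this elementary bilinear route falls short of the claimed range whenever $\epsilon < 1/4$.

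\emph{The actual route.} Following Sawin-Shusterman, one makes the change of variables $x = a + Fy$ to rewrite
$$\sum_{\substack{x \in \cM_n \\ x \equiv a \Mod F}} \mu(x) = \sum_{y \in \cY} \mu(a + Fy),$$
where $\cY$ is the affine set of $y \in \F_q[T]$ of degree $n-r$ whose leading coefficient is the inverse of that of $F$. This recasts the problem as a M{\"o}bius sum over an $(n-r)$-dimensional affine slice of $\cM_n$, i.e.\ a short-interval type sum. One then constructs a constructible $\overline{\Q}_\ell$-sheaf on $\A^{n-r}_{\F_q}$ whose Frobenius trace function recovers $\mu(a + Fy)$ and bounds its cohomology via Deligne's Riemann hypothesis.

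The hardest step is controlling wild ramification in characteristic $p$: the Swan conductors of the relevant sheaves produce a sum-of-Betti-numbers factor that grows polynomially in $p$, and the hypothesis $q > \left(\tfrac{\epsilon+2}{\epsilon} pe\right)^{2/(1-2\beta)}$ is precisely the threshold needed to absorb this factor into a genuine $q^{(n-r)\beta/p}$ saving over the trivial bound $q^{n-r}$. Since this invokes Deligne's theorem on higher-dimensional affine spaces, the argument has no classical counterpart over $\Z$, which is why such a sharp range (going all the way down to $n = (1+\epsilon)r$) is accessible here.
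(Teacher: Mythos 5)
The paper offers no proof of this lemma at all --- it is imported verbatim as a special case of \cite[Theorem 4.5]{sawinshusterman} --- so your proposal is being measured against a bare citation rather than an internal argument. Your sketch correctly identifies that source, correctly diagnoses why the elementary Vaughan/bilinear route via Theorem \ref{thm:bilinear_savings} cannot reach moduli as large as $n=(1+\epsilon)r$, and gives a fair high-level account of the geometric proof in \cite{sawinshusterman} (the substitution $x=a+Fy$ turning the progression into an affine slice of $\cM_n$, an $\ell$-adic sheaf whose Frobenius trace computes $\mu(a+Fy)$, Deligne's bounds, and the hypothesis on $q$ absorbing the Betti-number/Swan-conductor growth, which is also where the characteristic $p$ enters the exponent $1-\beta/p$). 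Be aware, though, that as written your ``actual route'' names these steps rather than carrying any of them out --- the sheaf construction, the cohomological vanishing, and the quantitative ramification estimates are all asserted --- so it is an outline of Sawin and Shusterman's proof, not an independent proof; for the purposes of this paper that is precisely the level of detail the author supplies anyway, namely none.
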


Finally, the next result is \cite[Proposition 5.2]{sawinshusterman}. Originally this was only stated for square-free $F$, but in fact that immediately implies that it holds for arbitrary $F$.

\begin{lem}\label{lem:sawinshusterman_mainterm}
    For any positive integer $d$, 
    $$\sum_{k=1}^dkq^{-k}\sum_{\substack{x \in \cM_k \\ (x,F) = 1}}\mu(x) = -\frac{q^r}{\phi(F)} + q^{o(r+d)-d}.$$
\end{lem}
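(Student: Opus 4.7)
The plan is to exploit the Euler-product generating function
\[
g(u) \;:=\; \sum_{\substack{x \in \cM \\ (x,F)=1}}\mu(x)\, u^{\deg x} \;=\; \prod_{P \nmid F}(1-u^{\deg P}) \;=\; \frac{1-qu}{G(u)}, \qquad G(u) := \prod_{P \mid F}(1-u^{\deg P}),
\]
where the second equality uses the standard identity $\prod_{P \in \cP}(1-u^{\deg P}) = 1-qu$. Writing $M_k := \sum_{x \in \cM_k,\, (x,F)=1}\mu(x)$, one has $g(u) = \sum_{k \geq 0} M_k u^k$. The series has a simple zero at $u=1/q$ coming from the factor $1-qu$, and using the identity $\phi(F) = q^r \prod_{P \mid F}(1-q^{-\deg P}) = q^r\, G(1/q)$ a direct computation yields
\[
\sum_{k \geq 1} k M_k q^{-k} \;=\; \bigl[\, u\, g'(u) \,\bigr]_{u = 1/q} \;=\; -\frac{1}{G(1/q)} \;=\; -\frac{q^r}{\phi(F)}.
\]
This is the target main term.

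To pass from the infinite sum to the partial sum up to $d$, I would expand $1/G(u) = \sum_{k \geq 0} p_F(k)\, u^k$, where $p_F(k)$ counts the monic polynomials of degree $k$ whose prime factors all divide $F$ (equivalently, tuples $(a_P)_{P \mid F} \in \Z_{\geq 0}^{\omega(F)}$ with $\sum_P a_P \deg P = k$). The factorisation $g(u) = (1-qu)/G(u)$ gives $M_k = p_F(k) - q\, p_F(k-1)$, and a short telescoping of $\sum_{k=1}^d k q^{-k}(p_F(k) - q p_F(k-1))$ produces
\[
\sum_{k=1}^{d} k\, q^{-k} M_k \;=\; d\, p_F(d)\, q^{-d} \;-\; \sum_{k=0}^{d-1} p_F(k)\, q^{-k}.
\]
Combining this with the complete series $\sum_{k \geq 0} p_F(k) q^{-k} = 1/G(1/q) = q^r/\phi(F)$ gives
\[
\sum_{k=1}^{d} k\, q^{-k} M_k \;=\; -\frac{q^r}{\phi(F)} \;+\; d\, p_F(d)\, q^{-d} \;+\; \sum_{k \geq d} p_F(k)\, q^{-k}.
\]

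What remains, and the main obstacle, is to bound the error $E := d\, p_F(d)\, q^{-d} + \sum_{k \geq d} p_F(k)\, q^{-k}$ by $q^{o(r+d)-d}$. For this I would use the crude estimate $p_F(k) \leq (k+1)^{\omega(F)}$, where $\omega(F)$ denotes the number of distinct prime factors of $F$. Lemma~\ref{lem:divisors} controls $\omega(F)$ via $2^{\omega(F)} \leq \tau(F) \leq q^{o(r)}$, so that $\omega(F) = o(r)$, and hence $p_F(k) \leq q^{o(r) \log_q(k+1)} \leq q^{o(r+k)}$. A routine check shows that once one is past the (small) threshold where $p_F(k)q^{-k}$ begins to decrease geometrically, the tail $\sum_{k \geq d} p_F(k)q^{-k}$ is dominated by its leading term $p_F(d) q^{-d}$ up to a multiplicative constant, giving $E \ll q^{o(r+d)-d}$. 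The delicate point is confirming the exponent truly matches the promised $o(r+d)$ across all admissible ranges of $(r,d)$, which rests on the quantitative form of the divisor bound in Lemma~\ref{lem:divisors}.
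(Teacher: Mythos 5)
Your argument is self-contained where the paper's is not: the paper simply imports \cite[Proposition~5.2]{sawinshusterman} and observes that the square-free hypothesis can be dropped because both sides of the identity depend only on the radical of $F$ (note $(x,F)=1$ iff $(x,\mathrm{rad}\,F)=1$, and $q^{\deg F}/\phi(F)=q^{\deg \mathrm{rad} F}/\phi(\mathrm{rad}\,F)$). Your first two steps are correct: the factorisation $g(u)=(1-qu)/G(u)$, the evaluation $\sum_{k\ge 1}kM_kq^{-k}=-1/G(1/q)=-q^r/\phi(F)$ (legitimate, since the poles of $1/G$ lie on $|u|=1$, so the series converges at $u=1/q$ and may be differentiated termwise), and the telescoped identity expressing the partial sum as $-q^r/\phi(F)+d\,p_F(d)q^{-d}+\sum_{j\ge d}p_F(j)q^{-j}$ all check out.

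The error-term analysis, however, has a genuine gap, which you partly flag yourself. The chain $p_F(k)\le (k+1)^{\omega(F)}$, $\omega(F)=o(r)$, hence $p_F(k)\le q^{o(r)\log_q(k+1)}\le q^{o(r+k)}$ fails at the last inequality: $o(r)\cdot\log_q(k+1)$ need not be $o(r+k)$. Concretely, let $F$ be the product of all monic irreducibles of degree at most $m$, so that $r\asymp q^m$ and $\omega(F)\asymp r/\log_q r$, and take $d\asymp r$. Then $(d+1)^{\omega(F)}=q^{\omega(F)\log_q(d+1)}=q^{(c+o(1))r}$ with $c\ge 1$, which can exceed $q^{d}$; your bound then does not even show that $d\,p_F(d)q^{-d}$ tends to zero. (The true $p_F(d)$ is $q^{o(d)}$ in this range, but that is a smooth-polynomial density estimate, not a consequence of the crude tuple count.) Similarly, ``the tail is dominated by its leading term up to a multiplicative constant'' hides a constant depending on $F$ that can be exponentially large in $r$. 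The repair is to bound the tail through the generating function rather than coefficientwise (Rankin's trick): for any $\sigma\in(0,1)$,
\[
\sum_{j\ge d}p_F(j)q^{-j}\;\le\;q^{-(1-\sigma)d}\prod_{P\mid F}\bigl(1-q^{-\sigma\deg P}\bigr)^{-1}\;\le\;q^{-(1-\sigma)d}\bigl(1-q^{-\sigma}\bigr)^{-\omega(F)}\;=\;q^{-d+\sigma d+O_\sigma(\omega(F))}.
\]
Since $\omega(F)\le \log_2\tau(F)=o(r)$ by Lemma~\ref{lem:divisors} (for fixed $q$), the exponent is $-d+\sigma(r+d)+o_\sigma(r+d)$ for each fixed $\sigma$, and as $\sigma>0$ is arbitrary this is exactly $-d+o(r+d)$; the term $d\,p_F(d)q^{-d}\le d\sum_{j\ge d}p_F(j)q^{-j}$ is absorbed the same way since $d=q^{o(r+d)}$. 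With this substitution your proof closes.
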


 \subsection{The Weil bound for Kloosterman sums}
To effectively bound the bilinear Kloosterman sums introduced in Section \ref{sec:results_bilinear}, we will need a few well-known estimates regarding complete and incomplete Kloosterman sums. First, we need the following orthogonality relation (see \cite[Corollary 4.2]{Bagshaw2023}). 
\begin{lem}\label{lem:orthogonality}
    For any $a \in \F_q[T]$ with $\deg a < r$ and positive integer $n$,
    $$\sum_{\deg x < n}e_F(ax) = 
    \begin{cases}
        q^n, &\deg a < r-n\\
        0, &\text{otherwise}.
    \end{cases}
    $$
\end{lem}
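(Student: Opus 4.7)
The plan is to exploit the fact that the set $V_n := \{x \in \F_q[T] : \deg x < n\}$ is an $\F_q$-vector space of dimension $n$ with basis $\{1, T, T^2, \ldots, T^{n-1}\}$, and that $x \mapsto e_F(ax)$ is an additive character on $\F_q[T]$. Writing $x = \sum_{i=0}^{n-1} x_i T^i$ with each $x_i \in \F_q$ and using $e_F(y_1 + y_2) = e_F(y_1)e_F(y_2)$ factorises the sum as
\begin{equation*}
\sum_{\deg x < n} e_F(ax) \;=\; \prod_{i=0}^{n-1} \sum_{x_i \in \F_q} e_F\!\left(a x_i T^i\right).
\end{equation*}

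Next, for each $i$ let $\gamma_i \in \F_q$ denote the coefficient of $T^{-1}$ in the Laurent expansion of $a T^i / F \in \Ki$. By the definition of $e$ and of $e_F$, we have $e_F(a x_i T^i) = \psi(x_i \gamma_i)$, where $\psi : \F_q \to \C^\times$, $\psi(\alpha) = \exp(2\pi i \Tr(\alpha)/p)$, is the standard nontrivial additive character on $\F_q$. The elementary orthogonality relation $\sum_{y \in \F_q} \psi(c y) = q \cdot \mathbf{1}_{c = 0}$ (which follows from the surjectivity of $\Tr: \F_q \to \F_p$) then gives the inner sum as $q$ if $\gamma_i = 0$ and $0$ otherwise. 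Hence the full sum is $q^n$ if all of $\gamma_0, \ldots, \gamma_{n-1}$ vanish, and $0$ otherwise.

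The remaining step is to identify the condition $\gamma_i = 0$ for $i = 0, 1, \ldots, n-1$ with the inequality $\deg a < r - n$. Since $\gamma_i$ equals the coefficient of $T^{-1-i}$ in $a/F$, the vanishing condition is that $[T^j](a/F) = 0$ for $j \in \{-1, -2, \ldots, -n\}$. Writing $F = f_r T^r + \text{lower}$ with $f_r \neq 0$, one computes $1/F$ in $\Ki$ as a Laurent series with leading term $f_r^{-1} T^{-r}$. Thus, when $a \neq 0$ and $d := \deg a < r$, the expansion $a/F = (a_d/f_r) T^{d-r} + \text{lower order terms}$ has its highest nonzero Laurent coefficient at $T^{d-r}$, and all coefficients of $T^j$ with $j > d - r$ vanish. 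The required vanishing on $\{-1, \ldots, -n\}$ therefore holds precisely when $d - r < -n$, i.e.\ $\deg a < r - n$; the case $a = 0$ is trivial and consistent with the convention $\deg 0 = -\infty$.

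\textbf{Main obstacle.} There is no serious obstacle: the argument is a routine additive-character orthogonality, cleanly separating by the $\F_q$-basis of $V_n$. The only points to handle carefully are that $F$ need not be monic (so the leading coefficient $f_r$ enters when expanding $1/F \in \Ki$), and the edge cases $a = 0$ and $n \geq r$, which are immediate from the same computation.
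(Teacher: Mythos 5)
Your proof is correct and complete: the factorisation over the $\F_q$-basis $\{1,T,\dots,T^{n-1}\}$, the reduction of each factor to the orthogonality relation for the canonical additive character $\psi$ of $\F_q$, and the identification of the vanishing of the Laurent coefficients $[T^{-1}],\dots,[T^{-n}]$ of $a/F$ with the condition $\deg a < r-n$ (using $\deg a < r$ to handle the converse direction) are all sound, including the edge cases $a=0$ and $n\ge r$. The paper does not prove this lemma but simply cites it from an earlier reference, and your argument is the standard one that such a citation points to, so there is nothing substantive to contrast.
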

The following is from \cite[Lemma A.13]{Bagshaw2023}.
\begin{lem}\label{lem:weil}
    For any $a,b\in \F_q[T]$, 
    $$\Bigg{|}\sum_{\substack{\deg x < r \\ (x,F) = 1}}e_F(ax + b\ov{x} )\Bigg{|} \ll_{\epsilon} q^{r/2 + \deg(a,b,F)/2 + r\epsilon}.$$
\end{lem}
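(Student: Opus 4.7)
The plan is to follow the classical strategy for bounding complete Kloosterman sums: reduce to prime power modulus by the Chinese Remainder Theorem, handle prime powers $P^e$ with $e \geq 2$ via $P$-adic stationary phase (Hensel lifting), and invoke Weil's bound at the prime level. Each step introduces at most a constant factor per prime divisor of $F$, so the accumulated overhead is $\tau(F) = q^{o(r)}$ by Lemma \ref{lem:divisors}, which is absorbed into the $q^{r\epsilon}$ factor.

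For the CRT step, when $F = F_1 F_2$ with $(F_1, F_2) = 1$, parameterizing $x \bmod F$ via $(x_1, x_2) \bmod (F_1, F_2)$ and using $e_F = e_{F_1} \cdot e_{F_2}$ (with the relevant twisted arguments) decomposes the sum as a product of two Kloosterman sums of the same shape modulo $F_1$ and $F_2$ with appropriately twisted parameters. Iterating reduces the problem to $F = P^e$ for a single monic irreducible $P$. For $e \geq 2$, I would set $\ell = \lfloor e/2 \rfloor$ and write $x = y + P^{e-\ell} z$ with $y$ ranging over residues coprime to $P$ modulo $P^{e-\ell}$ and $z$ arbitrary modulo $P^\ell$. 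Using the Taylor expansion $\overline{x} \equiv \overline{y} - P^{e-\ell} z \overline{y}^2 \pmod{P^e}$, the inner sum over $z$ becomes a complete sum of $e_{P^\ell}(z(a - b\overline{y}^2))$, which by Lemma \ref{lem:orthogonality} (applied in the local ring) vanishes unless $a y^2 \equiv b \pmod{P^\ell}$. This quadratic congruence has $O(1)$ solutions modulo $P^\ell$ whenever $(a, b, P) = 1$, yielding the desired $q^{e \deg P / 2}$ saving; the opposite case is tracked by the $\deg(a,b,F)$ correction.

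For the base case $F = P$ with $(a, b, P) = 1$, the sum is a classical Kloosterman sum over the residue field $\F_{q^{\deg P}}$ and is bounded by $2 q^{\deg P / 2}$ via Weil's theorem (the Riemann hypothesis for the Artin--Schreier curve attached to $ax + b/x$). If instead $P \mid (a, b)$, the sum is trivially at most $q^{\deg P}$, which matches $q^{\deg P/2 + \deg(a,b,P)/2}$ since then $\deg(a, b, P) = \deg P$. The main obstacle is bookkeeping the $(a, b, F)$ factor across the CRT decomposition and through the Hensel step: at each prime $P$ with high $P$-adic valuation in $(a, b)$, the stationary-phase saving degrades by precisely the amount compensated by the $\deg(a, b, F)/2$ term in the target bound, and this must be tracked uniformly in $e$ before multiplying the per-prime estimates and applying Lemma \ref{lem:divisors}.
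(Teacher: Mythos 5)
The paper does not prove this lemma itself; it is imported verbatim from \cite[Lemma A.13]{Bagshaw2023}, so there is no internal argument to compare against. Your outline is the classical route (CRT factorization into prime powers, $P$-adic stationary phase for exponents $e \geq 2$, Weil's bound at irreducible modulus), and the multiplicative bookkeeping of $\deg(a,b,F)$ across the CRT step and the reduction $\sum_{x} e_{P^e}(ax+b\ov{x}) = |P|^{s}\,K(a',b';P^{e-s})$ when $P^{s}$ exactly divides $(a,b)$ are both sound: the twisting parameters are units modulo each factor, so the local gcd data is preserved, and the total overhead is $q^{o(r)}$ by Lemma \ref{lem:divisors}.

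There is, however, a genuine gap at odd exponents $e \geq 3$. With $\ell = \lfloor e/2\rfloor$ you have $e-\ell = \ell+1$, so $y$ runs over units modulo $P^{\ell+1}$ while the vanishing condition from the $z$-sum, $ay^{2}\equiv b \pmod{P^{\ell}}$, constrains $y$ only modulo $P^{\ell}$. Hence there are $O(|P|)$ admissible $y$, not $O(1)$, each contributing $|P|^{\ell}$ from the $z$-sum, for a total of $O(|P|^{(e+1)/2})$ rather than the claimed $O(|P|^{e/2})$. This half-power loss per odd-exponent irreducible is not absorbable into $q^{\epsilon r}$: for $F=\prod_{i=1}^{k}P_i^{3}$ with distinct linear $P_i$ your argument yields only $q^{2r/3+o(r)}$ against the target $q^{r/2+\epsilon r}$. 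You also cannot dodge this by taking $\ell=\lceil e/2\rceil$, since then $2(e-\ell)<e$ and the expansion $\ov{x}\equiv \ov{y}-P^{e-\ell}z\ov{y}^{2}\pmod{P^{e}}$ is no longer valid. The standard repair (Sali\'e's argument) is to retain the quadratic term: writing the admissible $y$ as $y_{0}+P^{\ell}w$ with $w$ modulo $P$, the phase becomes a nondegenerate quadratic in $w$ over the residue field (its leading coefficient is a unit multiple of $b\ov{y_0}^{3}$, and $(a,b,P)=1$ forces $P\nmid b$ at any stationary point), and the resulting Gauss sum has modulus $|P|^{1/2}$, recovering $O(|P|^{e/2})$. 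With that insertion the rest of your argument goes through.
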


Next, Lemma \ref{lem:orthogonality} and Lemma \ref{lem:weil} implies the following. 
\begin{lem}\label{lem:weil_incomplete}
   For any $b \in \F_q[T]$ and positive integer $n \leq r$, 
    $$\Bigg{|}\sum_{\substack{\deg x < n \\ (x,F) = 1}}e_F(b\ov{x} )\Bigg{|} \ll_{\epsilon} q^{r/2 + \deg(b,F)/2 + \epsilon r}.$$
\end{lem}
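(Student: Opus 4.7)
The plan is to use the standard completion technique, reducing the incomplete sum to the complete Kloosterman sum already controlled by Lemma \ref{lem:weil}. The key observation is that for $n \leq r$, the set $\{x \in \F_q[T] : \deg x < n\}$ injects into $\F_q[T]/F$ as an additive subgroup whose annihilator under the pairing $(a,x) \mapsto e_F(ax)$ is $\{a : \deg a < r-n\}$; this is exactly what Lemma \ref{lem:orthogonality} computes. Therefore Fourier inversion on the finite abelian group $\F_q[T]/F$ yields, for any $x$ of degree $< r$,
\[
\mathbf{1}_{\deg x < n} \;=\; \frac{1}{q^{r-n}}\sum_{\deg a < r-n} e_F(-ax).
\]

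First, I would insert this identity into the sum, extend the range of $x$ to $\deg x < r$, and swap the order of summation to obtain
\[
\sum_{\substack{\deg x < n \\ (x,F) = 1}}e_F(b\overline{x})
\;=\; \frac{1}{q^{r-n}}\sum_{\deg a < r-n}\,\sum_{\substack{\deg x < r \\ (x,F)=1}} e_F(-ax + b\overline{x}).
\]
Then I would apply Lemma \ref{lem:weil} to the inner sum, getting a bound of $q^{r/2 + \deg(a,b,F)/2 + \epsilon r}$, uniformly in $a$.

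The remaining task is to bound $\sum_{\deg a < r-n} q^{\deg(a,b,F)/2}$. I would split this according to the value of $d = \gcd(a,b,F)$, which is necessarily a divisor of $\gcd(b,F)$. For $a \neq 0$ we have $d \mid a$ with $\deg d < r-n$, and the number of such $a$ is at most $q^{r-n-\deg d}$, so the sum over $a \neq 0$ is bounded by $q^{r-n}\sum_{d \mid \gcd(b,F)} q^{-\deg d/2} \ll_\epsilon q^{r-n+\epsilon r}$ using Lemma \ref{lem:divisors}. The single term $a = 0$ contributes $q^{\deg\gcd(b,F)/2}$ separately. Combining and dividing by $q^{r-n}$ gives two terms: $q^{r/2+\epsilon r}$, which is absorbed into the target bound, and $q^{n - r/2 + \deg\gcd(b,F)/2 + \epsilon r}$, which is dominated by $q^{r/2 + \deg\gcd(b,F)/2 + \epsilon r}$ precisely because $n \leq r$.

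There is no serious obstacle here; the proof is essentially mechanical once completion is set up correctly. The only point requiring mild care is the gcd bookkeeping, and in particular isolating the $a=0$ contribution, which is what forces the hypothesis $n \leq r$ to enter exactly at the end of the estimate.
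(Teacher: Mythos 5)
Your proposal is correct and follows exactly the paper's argument: complete the sum via Lemma \ref{lem:orthogonality}, apply Lemma \ref{lem:weil} to the complete sums, and then perform the gcd bookkeeping over $a$ (which the paper leaves implicit but you carry out correctly, including the isolation of the $a=0$ term where $n\leq r$ is used).
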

\begin{proof}
By applying Lemma \ref{lem:orthogonality} and then rearranging and applying Lemma \ref{lem:weil}, 
    \begin{align*}
        \Bigg{|}\sum_{\substack{\deg x < n \\ (x,F) = 1}}e_F(b\ov{x} )\Bigg{|}
        &= q^{n-r}\Bigg{|}\sum_{\substack{\deg x < r \\ (x,F) = 1}}e_F(b\ov{x} )\sum_{\deg a < r-n}e_F(ax)\Bigg{|}\\
        &\ll_{\epsilon} q^{n-r}\sum_{\deg a < r-n}q^{r/2 + \deg(a,b,F)/2 + \epsilon r}\\
        &\ll_\epsilon q^{r/2 + \deg(b,F)/2 + \epsilon r}.
    \end{align*}
\end{proof}

We will also make use of the following. 
\begin{lem}\label{lem:weil_different_modulus}
    Let $b,u \in \F_q[T]$ and suppose $\deg u = O(r)$. Then  
    $$\Bigg{|}\sum_{\substack{\deg x < r \\ (x,uF) = 1}}e_F(b\ov{x} )\Bigg{|} \ll_\epsilon q^{r/2 + \deg(b,F)/2 + \epsilon r}.$$
\end{lem}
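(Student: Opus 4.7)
The plan is to reduce to Lemma \ref{lem:weil_incomplete} by using Möbius inversion to detect the additional coprimality condition $(x,u)=1$. Writing
$$\mathbf{1}_{(x,u)=1} = \sum_{d \mid \gcd(x,u)} \mu(d),$$
the sum in question becomes
$$\sum_{d \mid u} \mu(d) \sum_{\substack{\deg x < r \\ (x,F)=1 \\ d \mid x}} e_F(b\ov{x}).$$

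Next I would substitute $x = dy$. Only divisors $d$ with $(d,F)=1$ contribute, since otherwise the condition $(x,F)=1$ kills the inner sum; and we can assume $\deg d < r$ for the range of $y$ to be nonempty. For such $d$, the element $\bar d$ is a unit modulo $F$, so $\ov{dy} \equiv \bar d\,\bar y \pmod F$, and the sum becomes
$$\sum_{\substack{d \mid u \\ (d,F)=1,\ \deg d < r}} \mu(d) \sum_{\substack{\deg y < r-\deg d \\ (y,F)=1}} e_F(b\bar d\,\bar y).$$

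Now I would apply Lemma \ref{lem:weil_incomplete} to each inner sum, with $b$ replaced by $b\bar d \bmod F$. The key observation is that since $\bar d$ is a unit modulo $F$, we have $\deg\gcd(b\bar d \bmod F,\, F) = \deg\gcd(b,F)$, so each inner sum is bounded by $O_\epsilon\!\left(q^{r/2 + \deg(b,F)/2 + \epsilon r}\right)$, \emph{independently of $d$}. Summing over $d \mid u$ and applying Lemma \ref{lem:divisors} together with the hypothesis $\deg u = O(r)$ bounds the number of divisors by $O_\epsilon(q^{\epsilon r})$. Absorbing this factor into a slightly larger $\epsilon$ yields the claimed bound.

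There is no real obstacle here: the entire argument is a bookkeeping exercise once Möbius inversion is used. The only things to be careful about are (i) noting that $d$'s sharing a factor with $F$ contribute zero and may be discarded, (ii) that the substitution $x = dy$ preserves the bound in the inner sum because $\bar d$ is a unit, and (iii) that the divisor bound $q^{\epsilon r}$ may be absorbed by redefining $\epsilon$.
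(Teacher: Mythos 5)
Your proof is correct and follows essentially the same route as the paper: Möbius inversion over $d\mid\gcd(x,u)$, the substitution $x=dy$ with $\ov{dy}=\ov d\,\ov y$, an application of Lemma \ref{lem:weil_incomplete} uniform in $d$ (since $\ov d$ is a unit mod $F$, so $\deg\gcd(b\ov d,F)=\deg\gcd(b,F)$), and the divisor bound of Lemma \ref{lem:divisors} absorbed into $q^{\epsilon r}$ via $\deg u=O(r)$. The paper merely streamlines the bookkeeping by assuming $(u,F)=1$ at the outset, which your observation (i) handles explicitly.
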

\begin{proof}
Without loss of generality we may suppose that $(u,F) = 1$. We recall the identity
    $$\sum_{\substack{d|x \\ \text{ d monic}}}\mu(d) = \begin{cases}
        1, &\deg x = 0, \\
        0, &\text{ otherwise}.
    \end{cases}$$
    Thus a typical application of inclusion-exclusion implies 
    \begin{align*}
        \sum_{\substack{\deg x < r \\ (x,uF) = 1}}e_F(b\ov{x} )
        &= \sum_{\substack{\deg x < r \\ (x,F) = 1}}e_F(b\ov{x} )\sum_{\substack{d|(u,x)\\ d\text{ monic}}}\mu(d)\\
        &= \sum_{\substack{d|u \\ d \text{ monic}}}\mu(d)\sum_{\substack{\deg x < r \\ (x,F) = 1 \\ d|x}}e_F(b\ov{x})\\
        &= \sum_{\substack{d|u \\ d \text{ monic}}}\mu(d)\sum_{\substack{\deg x < r - \deg d \\ (x,F) = 1}}e_F(b\ov{dx}).
    \end{align*}
    Now applying the triangle inequality and Lemmas \ref{lem:divisors} and \ref{lem:weil_incomplete} concludes the proof. 
\end{proof}
\subsection{Additive energy of modular inversions}
We will repeatedly make use of bounds regarding the number of solutions to certain equations with modular inverses. For positive integers $n$ and $k$ we define $I_{F,a,k}(n)$ to count the number of solutions to 
\begin{align}\label{eq:sum_of_inv}
  \ov{x_1} +\cdots+ \ov{x_k} \equiv a \Mod{F}, ~\deg x_i < n, 
\end{align}
and 
\begin{align*}
    E_{F,k}^\inv(n) = \sum_{a \Mod{F}}I_{F,a,k}(n)^2.
\end{align*}
This can be considered a measure of the additive energy of the set 
$$\{\ov{x} \Mod{F} : \deg x < n\}.$$
First, we will make use of the following from \cite{BagshawKerr2023}. 

\begin{lem}\label{lem:inverse_energy}
   Let $k$ be a fixed positive integer. Then for any positive integer $n \leq r$,
    $$E_{F,k}^\inv(n) \ll_{\epsilon, k} q^{kn + \epsilon n} + q^{n(3k-1)-r+\epsilon n}.$$
    In particular, this implies
    \begin{align*}
    E_{F,k}^{\inv}(n) \ll_{\epsilon, k} 
    \begin{cases}
        q^{kn + \epsilon n}, &n < r/(2k-1),\\
        q^{n(3k-1)-r + \epsilon n}, &r/(2k-1) \leq n \leq r/k,\\
        q^{n(2k-1) + \max\{0, n-r\}}, & r/k < n
    \end{cases}
\end{align*}
by using the trivial bound when $r/k < n$. 
\end{lem}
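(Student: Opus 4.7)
The plan is to argue Fourier-analytically. Using orthogonality (Lemma~\ref{lem:orthogonality}) one has
$$E_{F,k}^\inv(n) = \frac{1}{q^r}\sum_{\deg b < r}|S(b)|^{2k}, \qquad S(b) = \sum_{\substack{\deg x < n \\ (x,F)=1}}e_F(b\overline x).$$
The contribution of $b = 0$ is $|S(0)|^{2k}/q^r \le q^{2kn-r}$, which is always dominated by the right-hand side of the claim. For $b \neq 0$, the Weil bound of Lemma~\ref{lem:weil_incomplete} gives $|S(b)| \ll q^{r/2 + \deg\gcd(b,F)/2 + \epsilon r}$, and this can be combined with the trivial bound $|S(b)| \le q^n$ and the Parseval-type identity $\sum_b |S(b)|^2 \ll q^{r+n}$ after stratifying $b$ by $\deg \gcd(b,F)$ (with the number of divisors of $F$ controlled by Lemma~\ref{lem:divisors}).

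A naive interpolation between these pointwise and $L^2$ estimates only recovers the trivial bound $q^{(2k-1)n}$, so the crucial step requires a more direct combinatorial treatment in the base case $k=2$. The idea is to clear denominators to rewrite the energy condition $\overline{x_1}+\overline{x_2} \equiv \overline{y_1}+\overline{y_2} \Mod{F}$ as the polynomial congruence $(x_1+x_2)y_1 y_2 \equiv (y_1+y_2)x_1 x_2 \Mod{F}$, and then count solutions by fixing a suitable subset of the variables and using coprimality with $F$ to bound the residue count of the remaining variable, carefully separating the diagonal where $\{x_1,x_2\}=\{y_1,y_2\}$ as multisets. This should yield $E_{F,2}^\inv(n) \ll q^{2n+\epsilon n} + q^{5n-r+\epsilon n}$. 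The generalisation to $k \ge 3$ then follows from a H\"older-type bootstrap that converts the $k=2$ bound into the two-term bound for all $k$, using the normalisation $\sum_a I_{F,a,k}(n) = q^{kn}$ together with the convolution identity $I_{F,a,k}(n) = \sum_{a_1+a_2 = a} I_{F,a_1,\lceil k/2\rceil}(n)\,I_{F,a_2,\lfloor k/2\rfloor}(n)$.

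The piecewise form follows by elementary case analysis, comparing the two terms in the three regimes $n < r/(2k-1)$, $r/(2k-1) \le n \le r/k$, and $n > r/k$. In the last regime, one simply invokes the trivial bound $E_{F,k}^\inv(n) \le \max_a I_{F,a,k}(n) \cdot \sum_a I_{F,a,k}(n) \le q^{(2k-1)n + \max\{0,n-r\}}$. The main technical difficulty lies in establishing the $k=2$ case cleanly: the polynomial congruence count must be performed carefully enough to isolate the diagonal contribution $q^{2n}$ from the generic Weil-controlled contribution $q^{5n-r}$, without incurring an extra factor of $q^n$ that the naive counting argument produces.
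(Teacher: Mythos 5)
First, note that the paper does not actually prove this lemma: it is imported wholesale from \cite{BagshawKerr2023}, the function-field analogue of Bourgain and Garaev's bound on the additive energy of modular inversions, so what you are attempting is a from-scratch reconstruction of that external theorem. Your opening (orthogonality, separating $b=0$, Parseval), the piecewise case analysis, and the trivial bound $q^{(2k-1)n+\max\{0,n-r\}}$ are all fine, but the argument has a fatal gap at exactly the step you pass over most quickly: the passage from $k=2$ to general $k$. Any H\"older-type bootstrap built on the convolution identity $I_{F,a,k}(n)=\sum_{a_1+a_2=a}I_{F,a_1,k_1}(n)I_{F,a_2,k_2}(n)$ yields an inequality of the shape $E_{F,k}^{\inv}(n)\le q^{2k_1n}E_{F,k_2}^{\inv}(n)$ (Young) or $E_{F,k}^{\inv}(n)\le q^{kn}\bigl(E_{F,k_1}^{\inv}(n)E_{F,k_2}^{\inv}(n)\bigr)^{1/2}$ (Cauchy--Schwarz). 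Already for $k=3$ these give a first term of size at least $q^{4n}$, whereas the lemma asserts $q^{3n+\epsilon n}$ for $n<r/5$; the discrepancy worsens as $k$ grows. The underlying reason is that the diagonal term $q^{kn}$ is essentially sharp for every $k$, so the assertion that it dominates for all $n<r/(2k-1)$ is a strictly \emph{stronger} statement for larger $k$, and no convolution inequality --- each application of which loses a positive power of $q^{n}$ --- can recover it from the $k=2$ case. This is precisely why the general-$k$ bound is a theorem in its own right rather than a corollary.

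The $k=2$ base case as sketched is also incomplete. After clearing denominators, fixing three of the four variables and solving for the last one modulo $F$ gives only $O(q^{3n})$ off-diagonal solutions, and $q^{3n}>q^{5n-r}$ throughout $r/3\le n<r/2$, which is exactly the range where the second term is supposed to take over. To reach $q^{5n-r}$ (or the sharper $q^{7n/2-r/2}$ of Lemma \ref{lem:inverse_energy_k=2}, whose proof the paper does give) one needs the Weil-derived pointwise estimate $I_{F,a,2}(n)\ll_\epsilon q^{\epsilon n}(1+q^{3n/2-r/2}+q^{2n+\deg(a,F)-r})$, summed against $\sum_aI_{F,a,2}(n)\ll q^{2n}$, together with a separate treatment of the residues $a$ with large $\gcd(a,F)$. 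Neither load-bearing step of your argument is in place.
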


This can be improved upon when $k=2$, and the following is a generalisation of \cite[Theorem 2.5]{BS2022} to arbitrary modulus. 
\begin{lem}\label{lem:inverse_energy_k=2}
For any positive integer $n \leq r$, 
$$E_{F,2}^{\inv}(n) \ll_{\epsilon} q^{2n+\epsilon n}+q^{7n/2-r/2+\epsilon n}. $$
In particular this implies
\begin{align*}
    E_{F,2}^{\inv}(n) \ll_{\epsilon} 
    \begin{cases}
        q^{2n + \epsilon n}, &n < r/3,\\
        q^{7n/2 - r/2 + \epsilon n}, &r/3 \leq n \leq r,\\
        q^{4n - r}, & r < n
    \end{cases}
\end{align*}
by using the trivial bound when $r < n$. 
\end{lem}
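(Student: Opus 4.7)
The plan is to adapt the proof of \cite[Theorem 2.5]{BS2022}, which establishes the same bound for square-free modulus, to arbitrary composite $F$. The main additional complication comes from divisors of $F$ arising when $F$ has prime-power factors, and is handled via careful use of Lemmas~\ref{lem:weil}--\ref{lem:weil_different_modulus}.

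Since $(x_i y_i, F) = 1$, clearing denominators in the defining congruence $\overline{x_1} + \overline{x_2} \equiv \overline{y_1} + \overline{y_2} \pmod{F}$ yields the equivalent equation
$$(x_1 + x_2)\, y_1 y_2 \equiv (y_1 + y_2)\, x_1 x_2 \pmod{F}.$$
The ``diagonal'' contribution (where $\{y_1, y_2\} = \{x_1, x_2\}$, together with the related trivial family where $x_1 + x_2 \equiv y_1 + y_2 \equiv 0 \pmod F$ and the relevant swaps) contributes $O(q^{2n + \epsilon n})$, accounting for the first term in the bound; here we use that $n \le r$ so a congruence mod $F$ uniquely determines a polynomial of bounded degree.

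For the off-diagonal contribution, I substitute $b = y_1 - x_1 \neq 0$ and $a = x_2 - y_2 \neq 0$ (each of degree less than $n$) to rewrite the equation as the conic
$$a\, x_1(x_1 + b) \equiv b\, x_2(x_2 - a) \pmod{F}$$
in $(x_1, x_2)$. Let $U_{a,b}$ denote the number of $(x_1, x_2)$ satisfying this subject to the required coprimality and degree constraints. Via Fourier inversion, $U_{a,b}$ becomes a weighted sum of incomplete quadratic exponential sums. For Fourier coefficients $c$ with $\gcd(c, F)$ small, we apply the incomplete Weil bound of Lemma~\ref{lem:weil_incomplete}; for $c$ with $\gcd(c, F) = d$ large, the character $e_F(c\cdot)$ factors through the smaller modulus $F/d$, and we invoke Lemma~\ref{lem:weil_different_modulus} to handle the fact that the coprimality constraint remains to $F$ rather than $F/d$.

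The final step sums $U_{a,b}$ over valid $(a, b)$ using Cauchy--Schwarz:
$$\sum_{a, b} U_{a, b} \le q^n \cdot \left(\sum_{a, b} U_{a, b}^2\right)^{1/2}.$$
The second moment counts sextuples $(a, b, x_1, x_2, x_1', x_2')$ for which both $(x_1, x_2)$ and $(x_1', x_2')$ lie on the same conic; subtracting the two conic equations yields an auxiliary relation that constrains $(a, b)$ for each choice of 4-tuple $(x_1, x_2, x_1', x_2')$, and a further Weil-bound analysis (in the spirit of \cite{BS2022}) delivers $\sum_{a, b} U_{a, b}^2 \ll q^{5n - r + \epsilon n}$, producing the second term $q^{7n/2 - r/2 + \epsilon n}$. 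The main obstacle lies in carrying out this second-moment estimate uniformly for composite $F$: in the square-free case one exploits multiplicativity of characters across the prime factors of $F$, whereas for arbitrary $F$ one must treat the ``singular loci'' where various gcd's with $F$ are nontrivial, via inclusion-exclusion on divisors of $F$ with the combinatorial factor controlled by Lemma~\ref{lem:divisors}.
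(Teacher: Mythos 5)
Your reduction to the conic $a\,x_1(x_1+b)\equiv b\,x_2(x_2-a)\Mod{F}$ is algebraically correct and the diagonal accounting is fine, but the argument has a genuine gap at its core: the entire difficulty of the lemma is concentrated in the second-moment estimate $\sum_{a,b}U_{a,b}^2\ll q^{5n-r+\epsilon n}$, which you assert rather than prove. This is not a routine Weil-bound computation. The diagonal $(x_1',x_2')=(x_1,x_2)$ of that second moment already equals $\sum_{a,b}U_{a,b}$, i.e.\ the very quantity being bounded, so at best one gets a bootstrap; and for the off-diagonal terms, subtracting the two conic equations gives a relation of the shape $aP+bQ+abR\equiv 0\Mod{F}$ which, for a fixed generic $4$-tuple $(x_1,x_2,x_1',x_2')$, still admits about $q^{n}$ pairs $(a,b)$ --- giving only $q^{5n}$, a full factor $q^{r}$ short of the target. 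Recovering that factor requires re-using the original conic equation in a nontrivial way, and doing so uniformly for composite $F$, where the relevant complete sums are quadratic Gauss-type sums to prime-power moduli rather than the Kloosterman sums covered by Lemmas~\ref{lem:weil_incomplete} and~\ref{lem:weil_different_modulus} (which you invoke outside their scope). None of this is carried out.

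The paper's proof is both different and much shorter: it never clears denominators. After removing the $O(q^{2n})$ trivial solutions it writes the remaining count as $\sum_a I_{F,a,2}(n)^2$, inserts the pointwise bound $I_{F,a,2}(n)\ll_\epsilon q^{\epsilon n}(1+q^{3n/2-r/2}+q^{2n+\deg(a,F)-r})$ from \cite[Lemma~5.3]{Bagshaw2023} into one factor, and sums the other factor trivially; this immediately yields $q^{2n}+q^{7n/2-r/2}$ plus a gcd-dependent term. The only new input needed for composite $F$ is the treatment of that last term: if $(a,F)=d$ then $\ov{x_1}+\ov{x_2}\equiv a\Mod{F}$ forces $x_1+x_2\equiv 0\Mod{d}$, so such $a$ contribute at most $q^{2n-\deg d}$ pairs, and summing $q^{\deg d}\cdot q^{2n-\deg d}$ over monic divisors $d\mid F$ costs only $q^{\epsilon n}$ by Lemma~\ref{lem:divisors}. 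This divisor observation is the actual new ingredient for arbitrary modulus, and it does not appear in your proposal; to salvage your route you would instead need to supply the second-moment estimate in full.
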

\begin{proof}
Recall that we are counting the number of solutions to 
\begin{align}\label{eq:energy_k=2}
    \ov{x_1} + \ov{x_2} \equiv \ov{x_3} + \ov{x_4} \Mod{F}, ~~\deg x_i < n.
\end{align}
This is trivially satisfied if $x_1 \equiv -x_2 \Mod{F}$ and $x_3 \equiv -x_4 \Mod{F}$. Thus we can write 
$$E_F^{\inv}(n) = E_F^{\inv*}(n) + O(q^{2n})  $$
where $E_F^{\inv*}(n)$ counts the number of solutions to \eqref{eq:energy_k=2} where each side is non-zero. Next, we observe
$$E_F^{\inv*}(n) = \sum_{0 \leq \deg a < r}I_{F,a,2}(n)^2.$$
Using \cite[Lemma 5.3]{Bagshaw2023}, we have 
$$I_{F,a}(n) \ll_{\epsilon} q^{\epsilon n}(1 + q^{3n/2-r/2} + q^{2n+\deg(a,F)-r})$$
which implies 
\begin{align}\label{eq:E_inv_split_gcd}
    E_F^{\inv*}(n) 
    &\ll_{\epsilon} q^{\epsilon n}\sum_{0 \leq \deg a < r}I_{F,a}(n)(1 + q^{3n/2-r/2} + q^{2n+\deg(a,F)-r}) \nonumber\\
    &\ll_\epsilon q^{\epsilon n}(q^{2n}+ q^{7n/2-r/2}) + q^{2n-r + \epsilon n}\sum_{0 \leq \deg a < r}q^{\deg(a,F)}I_{F,a}(n).
\end{align}
To deal with the sum in this expression, we write
\begin{align}\label{eq:E_inv_split_gcd_2}
    \sum_{0 \leq \deg a < r}q^{(a,F)}I_{F,a}(n)
    &= \sum_{\substack{d|F \\ d\text{ monic}}}q^{\deg d}\sum_{\substack{0 \leq \deg a < r \\ (a,F) = d}}I_{F,a}(n).
\end{align}
First, if $\ov{x_1} + \ov{x_2} \equiv a \Mod{F}$ for some $(x_1, x_2)$ then of course $a$ is uniquely determined. Next, given some $a$ in the inner sum on the right of \eqref{eq:E_inv_split_gcd_2}, write $a = a_0d$ and $F = F_0d$ where $\gcd(a,F) = d$. Thus if 
$$\ov{x_1} + \ov{x_2} \equiv a_0d \Mod{F_0d}.$$
then
\begin{align}\label{eq:modd}
    x_1 + x_2 \equiv 0 \Mod{d}
\end{align}
implying
\begin{align*}
    &\sum_{0 \leq \deg a < r}q^{(a,F)}I_{F,a}(n)\\
    &\qquad\qquad\leq \sum_{\substack{d|F \\ d\text{ monic}}}q^{\deg d}\#\{(x_1, x_2) : \deg x_i < n, ~ x_1 + x_2 \equiv 0 \Mod{d}\}.
\end{align*}
If $\deg d \geq n $ then there are no solutions to \eqref{eq:modd} with $\deg x_i < n$ unless $x_1 = -x_2$, but we have already eliminated this case. If $\deg d < n$, then for any choice of $x_1$ there are at most $q^{n-\deg d}$ possibilities for $x_2$. Thus by equations \eqref{eq:E_inv_split_gcd} and \eqref{eq:E_inv_split_gcd_2} and Lemma \ref{lem:divisors} we can conclude 
\begin{align*}
    E_F^{\inv*}(n) 
    &\ll_\epsilon
q^{\epsilon n}(q^{2n}+ q^{7n/2-r/2}) + q^{2n-r}\sum_{\substack{d|F \\ d\text{ monic} \\ \deg d < n}}q^{\deg d}(q^{2n-\deg d})\\ 
&\ll_\epsilon q^{\epsilon n}(q^{2n}+ q^{7n/2-r/2} + q^{4n-r}).
\end{align*}
as desired. 
\end{proof}

Also, ideas from \cite{fouvryshparlinski2011} show that these can be improved when averaging over the modulus. 

\begin{lem}\label{lem:add_energy_average}
    Let $n,r$ and $k$ be positive integers. Then  
    $$\sum_{\deg F = r}E_{F,k}^{\inv}(n) \ll_{\epsilon, k} q^{r + n(k + \epsilon)} + q^{n(2k + \epsilon)}.$$
\end{lem}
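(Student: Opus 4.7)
My plan is to expand and swap the order of summation, reducing the question to two ingredients already available: the divisor bound Lemma \ref{lem:divisors} and the single-modulus energy estimate Lemma \ref{lem:inverse_energy}. The congruence $\sum_{i \le k} \ov{x_i} \equiv \sum_{i > k} \ov{x_i} \Mod{F}$ is, after multiplying through by $\prod_{j} x_j$ (a unit mod $F$), equivalent to $F \mid G(x_1,\ldots,x_{2k})$, where
$$G(x_1,\ldots,x_{2k}) = \sum_{i=1}^{k} \prod_{j \ne i} x_j - \sum_{i=k+1}^{2k} \prod_{j \ne i} x_j$$
is a polynomial of degree less than $(2k-1)n$. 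So
$$\sum_{\deg F = r} E_{F,k}^{\inv}(n) = \sum_{(x_1,\ldots,x_{2k})} \#\{F : \deg F = r,\ (x_i,F)=1 \text{ for all } i,\ F \mid G\},$$
the outer sum being over $2k$-tuples of nonzero polynomials of degree less than $n$.

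Now split according to whether $G = 0$. When $G \ne 0$, the number of degree-$r$ polynomials dividing $G$ is, by Lemma \ref{lem:divisors}, at most $O_{k,\epsilon}(q^{\epsilon n})$, so the contribution from these tuples is at most $q^{2kn} \cdot q^{\epsilon n} \ll q^{n(2k+\epsilon)}$, producing the second term of the bound. When $G = 0$, the count of eligible $F$ is at most $q^{r+1}$, hence this case contributes at most $q^{r+O(1)} N$ where
$$N = \#\Big\{(x_1,\ldots,x_{2k}) : \deg x_i < n,\ x_i \ne 0,\ \sum_{i=1}^{k} \tfrac{1}{x_i} = \sum_{i=k+1}^{2k} \tfrac{1}{x_i} \text{ in } \F_q(T)\Big\}.$$

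The crux is to show $N \ll_{k,\epsilon} q^{n(k+\epsilon)}$, and here is the trick: pick any monic irreducible $F_0 \in \F_q[T]$ of degree $r_0 = (2k-1)n$, which exists by the function-field prime number theorem (small or boundary values of $n$ being absorbable into the implied constants). Since $F_0$ is irreducible of degree $r_0 > \deg x_i$, automatically $(x_i,F_0)=1$; and since the identity $\sum 1/x_i = \sum 1/x_{k+i}$ holds in $\F_q(T)$, it also holds modulo $F_0$. Hence every tuple contributing to $N$ is counted by $E_{F_0,k}^{\inv}(n)$, and Lemma \ref{lem:inverse_energy} applied with $r=r_0=(2k-1)n$ gives
$$N \le E_{F_0,k}^{\inv}(n) \ll_{k,\epsilon} q^{kn+\epsilon n} + q^{n(3k-1)-(2k-1)n + \epsilon n} \ll q^{n(k+\epsilon)}.$$
Combining the two cases yields the claimed bound. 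The main obstacle I anticipate is precisely this bound on $N$: a direct diophantine approach, separating ``permutation'' solutions from exotic identities among sums of reciprocals in $\F_q(T)$, would be combinatorially delicate; the reduction to a single-modulus energy at a carefully chosen large irreducible modulus sidesteps this entirely and lets the existing lemma do all the work.
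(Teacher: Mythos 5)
Your argument is correct, and its skeleton --- clear denominators to reduce to counting $F\mid G(x_1,\dots,x_{2k})$ with $G=\sum_{i\le k}\prod_{j\ne i}x_j-\sum_{i>k}\prod_{j\ne i}x_j$, then split on whether $G$ vanishes identically, handling the nonvanishing case with the divisor bound --- is exactly the paper's. The one genuine difference is how the diagonal case $G=0$ is counted: the paper simply cites an external lemma (from the reference \cite{SZ2018}) asserting that the number of $2k$-tuples with $\sum_{i\le k}1/x_i=\sum_{i>k}1/x_i$ in $\F_q(T)$ is $O_{\epsilon,k}(q^{kn+\epsilon n})$, whereas you re-derive that count internally by reducing the rational-function identity modulo an auxiliary monic irreducible $F_0$ of degree $(2k-1)n$ (so that coprimality is automatic and the identity survives reduction) and then applying Lemma \ref{lem:inverse_energy} with $r=(2k-1)n$, at which point its two terms coincide at $q^{kn+\epsilon n}$. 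That specialization is valid and makes the proof self-contained relative to the lemmas already stated in the paper, at the cost of leaning on the nontrivial single-modulus energy bound where a direct combinatorial count would also suffice; what it buys is the avoidance of an extra citation and of any delicate analysis of ``exotic'' reciprocal identities. The minor bookkeeping you gloss over (the count of degree-$r$ polynomials is $(q-1)q^r$ rather than $q^r$, and the divisor bound counts monic divisors, so a factor bounded by $q$ must be absorbed into $q^{\epsilon n}$) is glossed over identically in the paper's own proof, so it is not a point of concern.
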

\begin{proof}
    By clearing denominators, it suffices to count solutions to 
    $$\sum_{i=1}^k\prod_{\substack{j=1 \\ j \neq i}}^{2k}x_i  - \sum_{i=k+1}^{2k}\prod_{\substack{j=1 \\ j \neq i}}^{2k}x_i\equiv 0 \Mod{F}, ~\deg x_i < n, ~\deg F = r. $$
    If the left-hand side of the expression is equal to $0$, then \cite[Lemma 2.6]{SZ2018} implies there are at most $O_{\epsilon, k}(q^{r + nk + n\epsilon})$ solutions. Otherwise, we must have that $F$ divides the left-hand side, yielding at most $O_{\epsilon}(q^{n\epsilon})$ choices for $F$, implying at most $O_{\epsilon}(q^{2kn + n\epsilon})$ solutions in total. 
\end{proof}

\section{Bilinear Kloosterman sums}\label{sec:bilinearsums}

We can now present our results regarding bilinear Kloosterman sums. Before proving Theorem \ref{thm:bilinear_savings} we will present a few more general results. The following can give a power-savings over the trivial bound when used in conjunction with Lemma's \ref{lem:inverse_energy} and \ref{lem:inverse_energy_k=2}, although for flexibility we do not substitute these bounds yet. We note that the case $k_1=k_2=2$ recovers \cite[Theorem 2.5]{BS2022} when Lemma \ref{lem:inverse_energy_k=2} is applied, although this generalises it to composite modulus. 
\begin{lem}\label{lem:bilinear}
Let $\epsilon > 0$. Let $k_1$ and $k_2$ denote positive integers and $a,F \in \F_q[T]$ with $\gcd(a,F) = 1$ and $\deg F = r$. Then for any positive integers $n$ and $m$ and weights as in \eqref{eq:weights}, we have 
\begin{align*}
     &\cW_{F,a}(m,n;\balpha, \bbeta) \ll_{\epsilon} q^{m+n + \epsilon r}\bigg{(}E_{F,k_1}(m)E_{F,k_2}(n)q^{r-2nk_2-2mk_1}\bigg{)}^{\frac{1}{2k_1k_2}}.
 \end{align*}
\end{lem}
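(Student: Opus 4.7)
The plan is to adapt the classical double-H\"older scheme of Bourgain--Garaev \cite{BourgainGaraev2013} to this setting. Write $T(x_2) = \sum_{\deg x_1<m,\,(x_1,F)=1}\alpha_{x_1}\, e_F(a\ov{x_1}\ov{x_2})$ and $W(c) = \sum_{\deg x_2<n,\,(x_2,F)=1} e_F(ac\ov{x_2})$, so that $S := \cW_{F,a}(m,n;\balpha,\bbeta) = \sum_{x_2}\beta_{x_2} T(x_2)$. First I apply H\"older's inequality in $x_2$ with exponent $k_1$, using $\|\bbeta\|_\infty\leq q^{o(r)}$, giving $|S|^{k_1}\ll q^{n(k_1-1)+\epsilon r}\sum_{x_2}|T(x_2)|^{k_1}$. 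For $k_1$ even, I expand $|T(x_2)|^{k_1}=T(x_2)^{k_1/2}\overline{T(x_2)}^{k_1/2}$ as a $k_1$-fold sum over $y=(y_1,\ldots,y_{k_1})$ with $\deg y_i<m$, producing a phase $e_F(a\ov{x_2}\Delta_y)$ where $\Delta_y=\ov{y_1}+\cdots+\ov{y_{k_1/2}}-\ov{y_{k_1/2+1}}-\cdots-\ov{y_{k_1}}$. Interchanging summations collapses the inner $x_2$-sum into $W(\Delta_y)$.

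Next I raise to the $k_2$-th power and apply H\"older in the $y$-variable, yielding
\[
|S|^{k_1 k_2}\ll q^{k_2n(k_1-1)+k_1m(k_2-1)+\epsilon r}\sum_y|W(\Delta_y)|^{k_2}.
\]
Grouping the $y$-sum by $\Delta_y\equiv c\pmod F$ and using the bijection $y_i\mapsto -y_i$ for $i>k_1/2$ (which preserves degrees and coprimality to $F$ and turns $\Delta_y$ into $\sum_i\ov{y_i}$), the number of tuples with $\Delta_y\equiv c$ is exactly $I_{F,c,k_1}(m)$, so the $y$-sum becomes $\sum_c I_{F,c,k_1}(m)|W(c)|^{k_2}$. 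Now I apply Cauchy--Schwarz in $c$ to obtain $\sum_c I_{F,c,k_1}(m)|W(c)|^{k_2}\leq E_{F,k_1}^{\inv}(m)^{1/2}\bigl(\sum_c|W(c)|^{2k_2}\bigr)^{1/2}$. Expanding $|W(c)|^{2k_2}=W(c)^{k_2}\overline{W(c)}^{k_2}$ as a sum over $z=(z_1,\ldots,z_{2k_2})$ with $\deg z_i<n$, and applying the orthogonality relation (Lemma~\ref{lem:orthogonality}) to the resulting $c$-sum, I use $\gcd(a,F)=1$ to force $\sum_{i\leq k_2}\ov{z_i}\equiv \sum_{i>k_2}\ov{z_i}\pmod F$, giving $\sum_c|W(c)|^{2k_2}=q^r E_{F,k_2}^{\inv}(n)$. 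Squaring the resulting inequality and simplifying the exponents $2k_2n(k_1-1)+2k_1m(k_2-1)+r=2k_1k_2(m+n)-2k_2n-2k_1m+r$ recovers the claimed bound.

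The hard part is combinatorial bookkeeping: the two H\"older exponents and the sign-flipping substitution must line up so that the final count is $I_{F,c,k_1}(m)$ rather than the higher-index $I_{F,c,2k_1}(m)$ (which would appear if one instead applied H\"older at exponent $2k_1$ in $x_2$), and so the subsequent orthogonality step produces $E_{F,k_2}^{\inv}(n)$ rather than $E_{F,2k_2}^{\inv}(n)$. For odd $k_1$ or $k_2$ the direct even-split $|T|^{k_i}=T^{k_i/2}\overline{T}^{k_i/2}$ is unavailable, so a small supplementary Cauchy--Schwarz interpolation between the adjacent even exponents $k_i\pm 1$ (or an application of Young's convolution inequality reducing $E_{F,2k_i}$ back to $E_{F,k_i}$) is needed to preserve the target exponent structure up to the $q^{\epsilon r}$ slack.
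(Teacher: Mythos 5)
Your argument is essentially the paper's proof: the same two applications of H\"older (in the opposite order, which by the symmetry of the final bound yields the identical exponent $q^{k_2n(k_1-1)+k_1m(k_2-1)}$), followed by grouping residues, Cauchy--Schwarz against $E_{F,k_1}^{\inv}(m)$, and orthogonality producing $q^rE_{F,k_2}^{\inv}(n)$; your exponent bookkeeping matches exactly. The only divergence is your handling of odd $k_1$: rather than interpolating between adjacent even exponents (which is delicate and not obviously gives the right energy at the odd exponent), the paper simply writes $|T(x_2)|^{k_1}=\gamma_{x_2}\,T(x_2)^{k_1}$ with $|\gamma_{x_2}|\le 1$ and expands $T(x_2)^{k_1}$ directly, so every sign in the phase is positive, the residue count is $I_{F,c,k_1}(m)$ verbatim for all positive integers $k_1$, and the concluding orthogonality step tolerates the bounded weights because the inner complete sum is $q^r$ or $0$ termwise. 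With that substitution in place of your parity split, your proof is the paper's.
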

\begin{proof}
      Let $S = |\cW_F(m,n;\balpha, \bbeta)|$. Applying H{\"o}lders inequality yields 
\begin{align*}
    S^{k_2} 
    &\ll_{\epsilon} q^{\epsilon r k_2/2}~q^{m(k_2-1)}\sum_{\substack{\deg x_1 < m \\ (x_1, F) = 1} }\Bigg{|} \sum_{\substack{\deg x_2 < n \\ (x_2, F) = 1}}\beta_{x_2}e_F(a\ov{x}_1\ov{x}_2) \Bigg{|}^{k_2}.
\end{align*}
Expanding the inner sum and rearranging then yields
\begin{align*}
    S^{k_2} 
    &\ll_\epsilon q^{\epsilon r k_2/2}~q^{m(k_2-1)}\sum_{\substack{\deg x_1 < m  \\ (x_1, F) = 1}}\Bigg{|} \sum_{\substack{y_1,...,y_{k_2} \\ \deg y_i < n \\ (y_i,F)=1}}\beta_{y_1}...\beta_{y_{k_2}}e_F(a\ov{x}_1(\ov{y}_1 + ...+\ov{y}_{k_2}) )\Bigg{|}\\
    &= q^{\epsilon r k_2/2}~q^{m(k_2-1)}\sum_{\substack{\deg x_1 < m \\(x_1, F) = 1}}\gamma_{x_1}\sum_{\substack{y_1,...,y_{k_2} \\ \deg y_i < n \\ (y_i, F) = 1}}\beta_{y_1}...\beta_{y_{k_2}}e_F(a\ov{x}_1(\ov{y}_1 + ...+\ov{y}_{k_2}))\\
    &\ll q^{\epsilon r k_2}~q^{m(k_2-1)}\sum_{\substack{y_1,...,y_{k_2} \\ \deg y_i < n \\ (y_i, F) = 1}}\Bigg{|}\sum_{\substack{\deg x_1 < m \\ (x_1, F) = 1}}\gamma_{x_1}e_F(a\ov{x}_1(\ov{y}_1 + ...+\ov{y}_{k_2}) )\Bigg{|}
\end{align*}
for some $|\gamma_{x_1}| \leq 1$. By Applying H{\"o}lder's inequality again, we have 
\begin{align*}
    S^{k_1k_2} \ll_{\epsilon} q^{\epsilon r k_1k_2}~&q^{mk_1(k_2-1)+ nk_2(k_1-1)}\\
    &\times\sum_{\substack{y_1,...,y_{k_2} \\ \deg y_i < n \\ (y_i, F) = 1}}\Bigg{|} \sum_{\substack{\deg x_1 < m \\(x_1, F) = 1}}\gamma_{x_1}e_F(a\ov{x}_1(\ov{y}_1 + ...+\ov{y}_{k_2}) )\Bigg{|}^{k_1}. 
\end{align*}
This can be rewritten as 
\begin{align*}
    S^{k_1k_2} \ll_{\epsilon} ~&q^{\epsilon r k_1k_2 + mk_1(k_2-1)+ nk_2(k_1-1)} \\
    &\qquad\sum_{\substack{\deg \lambda < r}}I_{F, \lambda, k_2}(n)\Bigg{|}\sum_{\substack{\deg x_1 < m \\ (x_1, F) = 1} }\gamma_{x_1}e_F(a\ov{x}_1\lambda )\Bigg{|}^{k_1}. 
\end{align*}
Applying the Cauchy-Schwarz inequality now yields
\begin{align*}
   S^{2k_1k_2} 
   &\ll_{\epsilon} q^{2\epsilon r k_1k_2}~q^{2mk_1(k_2-1)+ 2nk_2(k_1-1)} \\
   &\hspace{5em}\times \sum_{\deg \lambda < r}I_{F, \lambda, k_2}(n)^2 \times \sum_{\deg \lambda < r}\Bigg{|} \sum_{\substack{\deg x_1 < m \\ (x_1, F) = 1}} \gamma_{x_1}e_F(a\ov{x}_1\lambda)\Bigg{|}^{2k_1}\\
   &\ll q^{2\epsilon r k_1k_2}~q^{2mk_1(k_2-1)+ 2nk_2(k_1-1)} ~ E^{\inv}_{F, k_2}(n)~q^rE_{F, k_1}^{\inv}(m)
\end{align*}
and rearranging gives the desired result. 
\end{proof}
Another useful way to state Lemma \ref{lem:bilinear} is 
\begin{equation}\label{eq:bilinear_rearranged}
    \begin{aligned}
         \cW_{F,a}(m,n;\balpha, &\bbeta)~\ll_{\epsilon} ~q^{m+n + \epsilon r}\\
    &\times\bigg{(}E_{F,k_1}(m)q^{r/2-2mk_1}\bigg{)}^{\frac{1}{2k_1k_2}}\bigg{(}E_{F,k_2}(n)q^{r/2-2nk_2}\bigg{)}^{\frac{1}{2k_1k_2}}. 
    \end{aligned}
\end{equation}
  
A simpler result is the following, which is obtained using the argument of \cite[Lemma 2.4]{garaev2010}
\begin{lem}\label{lem:garaev_general}
   Let $k$ denote a positive integer and take other notation as in Lemma \ref{lem:bilinear}. Then 
\begin{align*}
      \cW_{F,a}(m,n;\balpha, \bbeta)
      &\ll_\epsilon q^{(m(2k-1)+\max\{r, m\})/2k + \epsilon r}E_{F,k}^\inv(n)^{1/2k}.
\end{align*} 
\end{lem}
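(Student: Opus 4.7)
The plan is to adapt Garaev's argument from \cite[Lemma 2.4]{garaev2010}. Its distinguishing feature, in contrast to Lemma \ref{lem:bilinear}, is that Hölder's inequality is applied only in the $x_1$-variable while the remaining dependence on $x_1$ is resolved through the orthogonality of additive characters rather than a second Hölder step; this is what produces a single additive-energy factor together with the $\max\{r,m\}$ term in the final exponent. Write $S = |\cW_{F,a}(m,n;\balpha,\bbeta)|$ and $T_{x_1} = \sum_{\deg x_2 < n,\,(x_2,F)=1}\beta_{x_2}e_F(a\ov{x_1}\ov{x_2})$. The first step is to apply Hölder in $x_1$ with exponent $2k$, absorbing the $\balpha$-weights via $\|\balpha\|_\infty \leq q^{o(r)}$, to obtain
\[
S^{2k} \ll q^{(2k-1)m + \epsilon r}\sum_{\substack{\deg x_1 < m\\(x_1,F)=1}}|T_{x_1}|^{2k}.
\]

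Next I would write $|T_{x_1}|^{2k} = T_{x_1}^k\,\overline{T_{x_1}}^k$ and expand it into a sum over $2k$-tuples $(y_1,\ldots,y_{2k})$; after interchanging summations, the $x_1$-exponential depends on the tuple only through the quantity $\lambda \equiv \ov{y_1}+\cdots+\ov{y_k}-\ov{y_{k+1}}-\cdots-\ov{y_{2k}} \pmod F$. Grouping tuples by $\lambda$ attaches a multiplicity $M(\lambda)$ satisfying $M(0) = E_{F,k}^{\inv}(n)$ by definition and $M(\lambda) \leq M(0)$ by a standard Cauchy--Schwarz shift, while the inner $x_1$-sum becomes the short exponential sum $U(\lambda) = \sum_{x_1}e_F(a\lambda\ov{x_1})$. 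This leaves an expression of the form $\sum_{x_1}|T_{x_1}|^{2k}\ll q^{\epsilon r}\sum_\lambda M(\lambda)|U(\lambda)|$ to estimate.

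For the concluding step I would use $M(\lambda) \leq E_{F,k}^{\inv}(n)$ together with the Cauchy--Schwarz inequality in $\lambda$ to reduce to evaluating the second moment $\sum_\lambda|U(\lambda)|^2$. Opening this square and invoking Lemma \ref{lem:orthogonality} reduces it to $q^r$ times the count of pairs $(x_1,x_1')$ of degree $<m$, coprime to $F$, with $x_1 \equiv x_1' \pmod F$. A short case analysis shows this count is $\asymp q^m$ when $m \leq r$ (since the congruence forces $x_1 = x_1'$) and $\asymp q^{2m-r}$ when $m > r$ (since each residue class contains $\asymp q^{m-r}$ elements), yielding $\sum_\lambda|U(\lambda)|^2 \asymp q^{m + \max\{r,m\}}$. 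Substituting everything back and extracting the $2k$-th root gives the claim. The only subtle point is this case split in the orthogonality computation, which is precisely what produces the $\max\{r,m\}$ rather than simply $r$ in the final exponent.
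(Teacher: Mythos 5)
Your opening H\"older step coincides with the paper's, and your identifications $M(0)=E_{F,k}^{\inv}(n)$ and $M(\lambda)\le M(0)$ are correct, but the concluding step does not deliver the stated exponent. After expanding $|T_{x_1}|^{2k}$ you keep the \emph{incomplete} sum $U(\lambda)=\sum_{\deg x_1<m,\ (x_1,F)=1}e_F(a\lambda\ov{x_1})$ and propose to control $\sum_\lambda M(\lambda)|U(\lambda)|$ via $M(\lambda)\le E_{F,k}^{\inv}(n)$ and the second moment $\sum_\lambda|U(\lambda)|^2\asymp q^{m+\max\{r,m\}}$. To match the lemma you would need $\sum_\lambda M(\lambda)|U(\lambda)|\ll q^{\max\{r,m\}}E_{F,k}^{\inv}(n)$, and no arrangement of Cauchy--Schwarz gets you there: for instance $\sum_\lambda M(\lambda)|U(\lambda)|\le E_{F,k}^{\inv}(n)\sum_\lambda|U(\lambda)|\le E_{F,k}^{\inv}(n)\,q^{r/2}\bigl(\sum_\lambda|U(\lambda)|^2\bigr)^{1/2}$ overshoots by a factor $q^{\min\{r,m\}/2}$, while the variant $\bigl(\sum_\lambda M(\lambda)^2\bigr)^{1/2}\bigl(\sum_\lambda|U(\lambda)|^2\bigr)^{1/2}$ produces the wrong power $E_{F,k}^{\inv}(n)^{1/2}$ together with a spurious $q^{kn}$. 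The bound in the lemma requires the $\lambda\not\equiv 0$ terms to contribute nothing at all, and neither an averaged nor a pointwise (Weil) treatment of the incomplete sums $U(\lambda)$ achieves that.

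The missing idea is a completion step, which is precisely where $\max\{r,m\}$ enters. Since $|T_{x_1}|^{2k}\ge 0$, first substitute $x_1\mapsto\ov{x_1}$ (a bijection on the units modulo $F$) so the phase becomes $e_F(ax_1\ov{x_2})$, linear in $x_1$, and then extend the sum from $\deg x_1<m$, $(x_1,F)=1$ to all residues $\deg x_1<r$ at the cost of a factor $q^{\max\{0,m-r\}}$. Now Lemma \ref{lem:orthogonality} applies exactly: the complete $x_1$-sum equals $q^r$ when $\lambda\equiv 0\pmod F$ (using $(a,F)=1$) and vanishes otherwise, so only the $M(0)=E_{F,k}^{\inv}(n)$ tuples survive and $S^{2k}\ll_\epsilon q^{m(2k-1)+\max\{0,m-r\}+r+\epsilon r}E_{F,k}^{\inv}(n)$, which is the claim. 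Your second-moment evaluation of $\sum_\lambda|U(\lambda)|^2$ is essentially this same orthogonality computation, but deployed one step too late to be lossless.
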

\begin{proof}
Again let $S = |\cW_{F,a}(m,n; \balpha, \bbeta)|$. Applying H{\"o}lders inequality and rearranging yields 
\begin{align*}
    S^{2k} 
    &\ll_\epsilon q^{k\epsilon r}q^{m(2k-1)}\sum_{\substack{\deg x_1 < m \\ (x_1, F) = 1}}\Bigg{|} \sum_{\substack{\deg x_2 < n \\ (x_2, F) = 1}}\beta_{x_2}e_F(a\ov{x_1}\ov{x_2})\Bigg{|}\\
    &\ll_\epsilon q^{k\epsilon r}q^{m(2k-1) + \max\{0, m-r\}}\sum_{\substack{\deg x_1 < r}}\Bigg{|} \sum_{\substack{\deg x_2 < n \\ (x_2, F) = 1}}\beta_{x_2}e_F(ax_1\ov{x_2})\Bigg{|}^{2k}. 
\end{align*}
Expanding and using orthogonality then implies 
\begin{align}\label{eq:garaev_additive_halfway}
    S^{2k} 
    &\ll_\epsilon q^{2k\epsilon r}q^{m(2k-1)+ \max\{0, m-r\} + r}E_{F, k}^\inv(n)
\end{align}
as desired. 

\end{proof}

In the case of $k=2$, this becomes 
\begin{align}\label{eq:garaev}
      \cW_{F,a}(m,n;\balpha, \bbeta)
      &\ll_\epsilon q^{(3m+\max\{r, m\})/4 + \epsilon r}E_{F,2}^\inv(n)^{1/4}. 
\end{align}
which will be used most often. 

We can again improve upon this by averaging over the modulus, using the exact same ideas as in the proof of Lemma \ref{lem:garaev_general} above. 
\begin{lem}\label{lem:garaev_average}
    With notation as in Lemma \ref{lem:garaev_general},
    \begin{align*}
        &\sum_{\deg F = r}\max_{(a,F) = 1} |\cW_{F,a}(m,n;\balpha, \bbeta)| \\
        &\qquad\qquad\qquad\ll_\epsilon q^{(m+r)\frac{2k-1}{2k} + \max\{m, r\}\frac{1}{2k} + \epsilon r}\left(\sum_{\deg F = r} E_{F,k}^\inv(n)\right)^{1/2k} 
    \end{align*}
\end{lem}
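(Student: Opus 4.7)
The plan is to follow the proof of Lemma~\ref{lem:garaev_general} almost verbatim, but to defer the extraction of the $2k$-th root until after summing over $F$, so that the averaging over $F$ is absorbed via a final application of H\"older's inequality.

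For each monic $F$ with $\deg F = r$, let $a_F$ be a choice of $a$ coprime to $F$ realising $\max_{(a,F)=1}|\cW_{F,a}(m,n;\balpha,\bbeta)|$, and set $S_F := |\cW_{F,a_F}(m,n;\balpha,\bbeta)|$. Running the argument of Lemma~\ref{lem:garaev_general} up through \eqref{eq:garaev_additive_halfway} (apply H\"older to the outer sum over $x_1$, complete the range of $x_1$ from $\deg x_1 < m$ to $\deg x_1 < r$ at a cost of $q^{\max\{0,m-r\}}$, expand the $2k$-th power, and use orthogonality over $x_1$ to produce the additive energy) yields a bound that is uniform in $F$:
\begin{equation*}
S_F^{2k} \ll_\epsilon q^{2k\epsilon r}\,q^{m(2k-1)+\max\{0,m-r\}+r}\,E_{F,k}^{\inv}(n).
\end{equation*}
The uniformity in $F$ is automatic because the hypothesis \eqref{eq:weights} enforces $\|\balpha\|_\infty,\|\bbeta\|_\infty < q^{o(r)}$ independently of $F$.

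Summing this bound over all monic $F$ of degree $r$ gives
\begin{equation*}
\sum_{\deg F = r} S_F^{2k} \ll_\epsilon q^{2k\epsilon r + m(2k-1) + \max\{0,m-r\} + r}\sum_{\deg F = r} E_{F,k}^{\inv}(n).
\end{equation*}
Since there are $O(q^r)$ polynomials of degree $r$, H\"older's inequality on the outer index $F$ now yields
\begin{equation*}
\sum_{\deg F = r} S_F \leq \Big(\sum_{\deg F = r} 1\Big)^{(2k-1)/(2k)}\Big(\sum_{\deg F = r} S_F^{2k}\Big)^{1/(2k)}.
\end{equation*}
Combining these displays and using $\max\{0,m-r\}+r = \max\{m,r\}$, the resulting exponent of $q$ is
\begin{equation*}
\frac{r(2k-1)}{2k} + \frac{m(2k-1) + \max\{0,m-r\} + r}{2k} + \epsilon r = (m+r)\frac{2k-1}{2k} + \frac{\max\{m,r\}}{2k} + \epsilon r,
\end{equation*}
which is exactly the exponent in the statement.

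There is no genuine obstacle here: every mechanism is already present in Lemma~\ref{lem:garaev_general}, and the proof is essentially a reordering of operations. The only points to watch are keeping the $q^{\epsilon r}$ factor uniform in $F$ (immediate from \eqref{eq:weights}) and the simple exponent bookkeeping above; the averaged energy $\sum_{\deg F = r} E_{F,k}^{\inv}(n)$ is then left in that form, to be estimated in applications via Lemma~\ref{lem:add_energy_average}.
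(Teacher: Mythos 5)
Your proof is correct and is essentially the paper's own argument: the paper likewise starts from \eqref{eq:garaev_additive_halfway}, extracts the $2k$-th root to get $S_F \ll_{\epsilon} q^{\epsilon r}q^{m\frac{2k-1}{2k}+\max\{m,r\}\frac{1}{2k}}E_{F,k}^{\inv}(n)^{1/2k}$ uniformly in $F$, and then applies H\"older over the $O(q^r)$ moduli of degree $r$ --- which is the same inequality as your ``sum the $2k$-th powers first, then apply H\"older to $\sum_F S_F$'' formulation. The exponent bookkeeping, including $\max\{0,m-r\}+r=\max\{m,r\}$, matches the statement exactly.
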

\begin{proof}
We can use equation \eqref{eq:garaev_additive_halfway} and then H{\"o}lders inequality to see
\begin{align*}
&\sum_{\deg F = r}\max_{(a,F) = 1} |\cW_{F,a}(m,n;\balpha, \bbeta)| \\
&\qquad\qquad\qquad\ll_{\epsilon} q^{\epsilon r}q^{m\frac{2k-1}{2k} + \max\{m, r\}\frac{1}{2k}}\sum_{\deg F = r}E_{F,k}^\inv(n)^{\frac{1}{2k}}\\
&\qquad\qquad\qquad \ll q^{\epsilon r}q^{m\frac{2k-1}{2k} + \max\{m, r\}\frac{1}{2k}}\left(\sum_{\deg F = r}E_{F,k}^\inv(n)\right)^{\frac{1}{2k}}q^{r\frac{2k-1}{2k}}
\end{align*}
and rearranging gives the desired result. 
\end{proof}

\subsection{Proof of Theorem \ref{thm:bilinear_savings}}
As before we let $S = |\cW_{F,a}(m,n; \balpha, \bbeta)|$, and split the discussion into a few cases. Without loss of generality, we may suppose that $n \leq m$.

First, we assume that $n \leq r/3$, and let $k \geq 2$ denote the largest integer such that $n(k-1) \leq r/2$. Note that $k$ is bounded above in terms of $\epsilon$ since $n$ is from below, and $nk > r/2$. Thus applying \eqref{eq:bilinear_rearranged} with $k_1 = 2$ and $k_2 = k$, together with Lemma \ref{lem:inverse_energy} gives
\begin{align*}
    S 
    &\ll_{\epsilon'} q^{m+n+\epsilon' r}\bigg{(}E_{F,2}(m)q^{r/2-4m}\bigg{)}^{\frac{1}{4k}}\bigg{(}q^{r/2- nk} + q^{n(k-1)-r/2} \bigg{)}^{\frac{1}{4k}}\\
    &\ll q^{m+n+\epsilon' r}\bigg{(}E_{F,2}(m)q^{r/2-4m}\bigg{)}^{\frac{1}{4k}}.
\end{align*}
for some sufficiently small $\epsilon '$. Since $k$ is bounded from above, it now suffices to show that for any $m > r(1/4 + \epsilon)$, 
$$E_{F,2}(m)q^{r/2-4m} < q^{-\delta_1r}$$
for some $\delta_1 > 0$. If $ r(1/4 + \epsilon) < m < r/3$, then Lemma \ref{lem:inverse_energy_k=2} yields 
$$E_{F,2}(m)q^{r/2-4m} \ll_{\epsilon} q^{r/2-2m + \epsilon m} < q^{-r\epsilon}$$
as desired. Similarly, applying Lemma \ref{lem:inverse_energy_k=2} in the case $r/3 \leq m \leq r$ and the case $r \leq m$ gives the desired result when $n \leq r/3$.

Next, we may assume $m,n \geq r/3$. By Lemma \ref{lem:bilinear} with $k_1=k_2=2$, it suffices to show 
\begin{align*}
    E_{F,k_1}(m)E_{F,k_2}(n)q^{r-4n-4m} < q^{-\delta_2r}
\end{align*}
for some $\delta_2 > 0$. If $r/3 \leq n \leq m \leq r$, then Lemma \ref{lem:inverse_energy_k=2} gives 
\begin{align*}
    E_{F,k_1}(m)E_{F,k_2}(n)q^{r-4n-4m} \ll_{\epsilon} q^{-m/2-n/2 + \epsilon m}
\end{align*}
which is sufficient. Similarly, applying Lemma \ref{lem:inverse_energy_k=2} in the cases 
$r/2 \leq n \leq r$ and $r \leq m$, as well as $r \leq n \leq m$, both yield the desired result.

\section{Applications}
Before proceeding we will make a few reductions common to each of Theorems \ref{thm:mobius_r/2}, \ref{thm:mobius_3/4} and \ref{thm:irving}. For $a,F \in \F_q[T]$ with $\deg F = r$ we set
\begin{equation}
    \begin{aligned}\label{eq:F_0_a_0_def}
        d = \deg(a,F), ~F_0 = F/(a,F), ~r_0 = \deg F_0, ~a_0 = a/(a,F).
    \end{aligned}
\end{equation}

\begin{lem}\label{lem:mobius_bound_smallr0}
\begin{align*}
    \sum_{\substack{\deg x < n \\ (x,F) = 1}}\mu(x)e_F(a\ov{x})
    &\ll_{\epsilon} q^{r_0+ n(1/2 +\epsilon)}. 
\end{align*}
\end{lem}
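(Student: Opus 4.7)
The plan is to exploit the identity $e_F(a\overline{x}) = e_{F_0}(a_0 \overline{x}_{F_0})$, where $\overline{x}_{F_0}$ denotes the inverse of $x$ modulo $F_0$. This follows from $a/F = a_0/F_0$ as rational functions and $\overline{x}_F \equiv \overline{x}_{F_0}\pmod{F_0}$, combined with the fact that $e$ is trivial on $\F_q[T]$. In particular, the exponential depends only on the residue class of $x$ modulo $F_0$, and $(a_0, F_0) = 1$ by construction. I would therefore split the sum according to the residue $b$ of $x$ modulo $F_0$, writing
\begin{equation*}
S = \sum_{\substack{b \bmod F_0 \\ (b, F_0) = 1}} e_{F_0}(a_0 \overline{b}_{F_0})\, N(b), \qquad N(b) := \sum_{\substack{\deg x < n \\ x \equiv b \,(F_0) \\ (x, F) = 1}} \mu(x).
\end{equation*}
Since the outer sum has at most $\phi(F_0) \le q^{r_0}$ terms, the task reduces to showing $N(b) \ll_\epsilon q^{n(1/2+\epsilon)}$ uniformly in $b$.

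To bound $N(b)$, I would write $F = F_0 D$ with $D = (a,F)$, and let $G$ denote the part of $D$ supported on primes that do not divide $F_0$. Because $(b, F_0) = 1$, the condition $(x, F) = 1$ is equivalent to $(x, G) = 1$, which I would remove by Möbius inversion over $h \mid G$. For each squarefree $h$, the substitution $x = hy$ (valid because $(h, F_0) = 1$, so the Chinese Remainder Theorem applies cleanly) converts the inner sum into one over $y$ in a single residue class modulo $F_0$ that is still coprime to $F_0$, together with an auxiliary coprimality constraint $(y, h) = 1$. A second Möbius inversion removes that constraint, leaving pure Möbius sums of the form $\sum_{z \equiv c \,(F_0)} \mu(z)$ with $(c, F_0) = 1$, which Corollary \ref{cor:mobius_arith} bounds by $q^{n(1/2+\epsilon)}$ (applied with modulus $F_0$ of degree $r_0$). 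Summing over the $\ll_\epsilon \tau(G)^2 \le q^{o(r)}$ auxiliary divisors then yields the stated bound for $N(b)$.

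The main subtlety is ensuring that after the substitutions the inner Möbius sums genuinely live modulo $F_0$ (rather than modulo the larger $F_0 h$, which would force us to apply Corollary \ref{cor:mobius_arith} at degree $r_0 + \deg h$ and waste the small-modulus saving). This succeeds precisely because $h$ is kept coprime to $F_0$ throughout, so the Chinese Remainder Theorem splits the two coprimality conditions without enlarging the modulus. The divisor factor $q^{o(r)}$ is absorbed into $q^{\epsilon n}$ in the natural regime $r = O(n)$; outside this regime the bound is already weaker than the trivial $q^n$ and nothing needs to be shown. An alternative route that also works is to expand $e_{F_0}(a_0 \overline{x}_{F_0})$ in multiplicative characters $\chi$ modulo $F_0$, bound the resulting Gauss sums by $q^{r_0/2}$ using the Weil bound (using $(a_0, F_0) = 1$), and invoke Corollary \ref{cor:Han+BLL} for each non-trivial character twist; this yields the slightly sharper bound $q^{r_0/2 + n(1/2+\epsilon)}$ and can serve as a cross-check.
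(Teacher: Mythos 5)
Your argument is, in outline, exactly the paper's: rewrite $e_F(a\ov{x})$ as $e_{F_0}(a_0\ov{x})$, group the $x$ into the $\phi(F_0)\le q^{r_0}$ residue classes modulo $F_0$, and apply Corollary~\ref{cor:mobius_arith} to each class. The one place you go beyond the paper is in noticing that the residue of $x$ modulo $F_0$ does not record coprimality of $x$ to the primes of $F$ that do not divide $F_0$, so the inner sums are not literally of the form covered by Corollary~\ref{cor:mobius_arith}; the paper's two-line proof silently drops this condition, so your instinct to treat it is sound.

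However, the mechanism you propose for removing it does not close. After the first M\"{o}bius inversion over $h\mid G$ and the substitution $x=hy$, you are left with a sum of $\mu(y)$ over a class modulo $F_0$ carrying the side condition $(y,h)=1$ (this condition is forced, since $\mu(hy)=0$ unless $(h,y)=1$). Your ``second M\"{o}bius inversion'' over $g\mid(y,h)$ must be followed by the substitution $y=gz$ in order to keep the modulus at $F_0$ with an invertible residue, and that substitution reintroduces the condition $(z,g)=1$: the procedure recurses rather than terminating in two steps, and unrolling it naively multiplies the number of terms by $\tau(h)$ at each level, which is not affordable. A clean one-step repair is the convolution identity $\mu(x)\,\mathbf{1}_{(x,G)=1}=\sum_{de=x,\ \mathrm{rad}(d)\mid \mathrm{rad}(G)}\mu(e)$, which reduces $N(b)$ to genuinely unconstrained sums $\sum_{e\equiv c\ (F_0)}\mu(e)$ with $(c,F_0)=1$; the resulting outer sum over $G$-smooth $d$ of degree less than $n$ is controlled by $q^{n(1/2+\epsilon)}\prod_{P\mid G}\bigl(1-|P|^{-1/2}\bigr)^{-1}$, and the Euler product is $q^{o(r)}$, absorbable in the regime $r=O(n)$ in which the lemma is actually applied. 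Note also that your character-sum ``cross-check'' does not sidestep the issue, since Corollary~\ref{cor:Han+BLL} likewise carries no coprimality condition, so the same repair is needed there.
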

\begin{proof}
    This is a direct application of Corollary \ref{cor:mobius_arith} as 
    \begin{align*}
         \sum_{\substack{\deg x < n \\ (x,F) = 1}}\mu(x)e_F(a\ov{x})
    &= \sum_{\substack{\deg x < n \\ (x,F) = 1}}\mu(x)e_{F_0}(a_0\ov{x}) \\
    &= \sum_{\substack{\deg y < \deg F_0 \\ (y,F) = 1}}e_{F_0}(a\ov{y})\sum_{\substack{\deg x < n \\ x \equiv y \Mod{F_0}}}\mu(x) \ll_\epsilon q^{r_0+ n(1/2 +\epsilon)}. 
    \end{align*}
\end{proof}

\begin{lem}\label{lem:mobius_bound_bigr0}
For any positive integer $U$ satisfying $2U < n$, 
\begin{align*}
     \Bigg{|}\sum_{\substack{\deg x < n \\ (x,F) = 1}}\mu(x)e_F(a\ov{x})\Bigg{|}
    \ll_\epsilon S_1 + S_2
    \end{align*}
    where 
   \begin{align*} 
    S_1 = q^{n\epsilon}\sum_{\substack{\deg x \leq u \\ (x,F) = 1}}\bigg{|}\sum_{\substack{\deg y < n-u \\ (y,F) = 1}}e_{F_0}(a_0\ov{xy})\bigg{|}, ~S_2 = q^{n\epsilon}\sum_{\substack{\deg x \leq v \\ (x,F) = 1}}\bigg{|} \sum_{\substack{\deg y < n-v \\ (y,F) = 1}}\beta_ye_{F_0}(a_0\ov{xy}) \bigg{|}.
\end{align*}
for some integers $u \leq 2U$ and $U < v \leq n-U$, and $|\beta_y| \ll_\epsilon q^{n\epsilon}$.
\end{lem}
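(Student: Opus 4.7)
The plan is to apply a function-field version of Vaughan's identity for $\mu$, modelled on \cite[Propositions 13.4 and 13.5]{IwaniecKowalski2004}. First, because $a/F = a_0/F_0$ as rational functions over $\F_q$ and the character $e$ is insensitive to the polynomial part of its argument, for every $x$ coprime to $F$ one has $e_F(a\ov{x}) = e_{F_0}(a_0\ov{x})$; this rewrites the sum with modulus $F_0$ and numerator $a_0$ throughout.

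The main step is then to decompose $\mu(x)$ using Vaughan's combinatorial identity with parameter $U$, whose function-field version is a term-for-term translation of the integer argument. Substituting into the exponential sum produces \emph{Type I} contributions, in which one divisor has degree at most $2U$ and the complementary factor carries no weight beyond the exponential $e_{F_0}(a_0\ov{\cdot})$, and \emph{Type II} contributions, which are genuine bilinear sums in two variables each of degree strictly greater than $U$. In both cases the weights arising from nested $\mu$-factors are bounded by the divisor function, hence by $q^{n\epsilon}$ via Lemma \ref{lem:divisors}. A layer decomposition on the outer variable (contributing a harmless $O(n) \ll q^{n\epsilon}$ factor) converts each Type I piece, after absorbing its weight into the absolute value, into the shape $S_1$ for some $u \leq 2U$. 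Layering one of the two variables in each Type II piece yields $S_2$; the constraints that both Type II variables have degree strictly greater than $U$ while their total degree is less than $n$ force the layer parameter into the interval $(U, n-U]$, matching exactly the required condition on $v$.

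The main obstacle is arranging the combinatorial identity so that the Type I outer degree is controlled by $2U$ rather than by some larger multiple of $U$; combining two small $\mu$-factors from a $\mu \ast \mathbf{1} \ast \mu$-style expansion achieves this bound. The coprimality conditions $(x, F) = 1$ and $(y, F) = 1$ propagate automatically, since any divisor of an $x$ coprime to $F$ is itself coprime to $F$, and all remaining bookkeeping is routine.
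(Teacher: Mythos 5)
Your proposal is correct and follows essentially the same route the paper takes (the paper defers the details to the references and to the appendix of \cite{sawinshusterman}, but the intended argument is exactly this): reduce to modulus $F_0$ via $e_F(a\ov{x}) = e_{F_0}(a_0\ov{x})$, apply the function-field Vaughan identity with parameter $U$, bound the convolution weights by the divisor function using Lemma \ref{lem:divisors}, and pigeonhole on the degree of the outer variable to fix $u \leq 2U$ and $U < v \leq n-U$. No gaps.
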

\begin{proof}
This follows from a standard manipulation of Vaughan's identity as in \cite{garaev2010, fouvryshparlinski2011, BourgainGaraev2013, irving2014}, so we will not include the details here. For this setting see the appendix in \cite{sawinshusterman}.

\end{proof}

\subsection{Proof of Theorem \ref{thm:mobius_r/2}}
Recall that we fix $\epsilon > 0$ and suppose that $r(1/2 + \epsilon)<  n  < r$. Additionally, recall $r_0, F_0$ and $a_0$ from \eqref{eq:F_0_a_0_def}. 

If $r_0 \leq n(1/2 - 2\epsilon)$ then Lemma \ref{lem:mobius_bound_smallr0} implies 
$$\sum_{\substack{\deg x < n \\ (x,F) = 1}}\mu(x)e_F(a\ov{x}) \ll_{\epsilon} q^{n(1-\epsilon)} $$
as desired. So we may now assume $r_0 > n(1/2 - 2\epsilon)$. 

By letting 
$$U = (n-r/2-r\epsilon/2)/2,$$
it suffices to bound the sums $S_1$ and $S_2$ in Lemma \ref{lem:mobius_bound_bigr0}. 

First we bound $S_1$ by applying Lemma \ref{lem:weil_incomplete} and Lemma \ref{lem:weil_different_modulus}. If $n-u \geq r_0$ then Lemma \ref{lem:weil_different_modulus} implies
$$S_1 \ll_{\epsilon} q^{u + n-u - r_0 + r_0/2 + \epsilon r/2} =q^{n-r_0/2 + \epsilon r/2}.$$
If $n-u \leq r_0$ then by $u \leq 2U = n-r/2-r\epsilon/2$ and Lemma \ref{lem:weil_incomplete},
$$S_1 \ll_{\epsilon} q^{u + r_0/2 + r\epsilon/3} \leq q^{n-r\epsilon/2 + n\epsilon/3}. $$
Either way, these provide sufficient power savings. 

Finally to bound $S_2$, we can directly apply Theorem \ref{thm:bilinear_savings} which completes the proof.

\subsection{Proof of Theorem \ref{thm:mobius_3/4}}
This proof is quite similar to the proof of Theorem \ref{thm:mobius_r/2}, and just requires slightly more attention to detail to obtain more explicit bounds. This expands upon some ideas from \cite{garaev2010, BanksHarcharrasShparlinski2003}. Again, recall the notation $a_0, r_0$ and $F_0$ from \eqref{eq:F_0_a_0_def}. We may assume $n > 3r/4$ since otherwise, the result is trivial. 

First, suppose that $r_0 < 7n/16$. Then Lemma \ref{lem:mobius_bound_smallr0} implies 
\begin{align*}
     \Bigg{|}\sum_{\substack{\deg x < n \\ (x,F) = 1}}\mu(x)e_F(a\ov{x})\Bigg{|}
    &\ll_\epsilon q^{ n(15n/16 +\epsilon)}
\end{align*}
as desired, and thus we may now assume $r_0 \geq 7n/16$. 

By letting $U =r_0/3$, we need to bound $S_1$ and $S_2$ as in Lemma \ref{lem:mobius_bound_bigr0}. First, we deal with $S_1$ and split the argument up into cases depending on the sizes of $u$ and $n-u$. 

\textit{Case 1:} $u \leq 2r_0/3$ and $r_0 \leq n-u$. Here we apply Lemma \ref{lem:weil_different_modulus} to the inner sum over $y$ to obtain 
\begin{align*}
    S_1
    &\ll_\epsilon q^{n-u-r_0 + \epsilon n/2}\sum_{\substack{\deg x \leq u \\ (x,F) = 1}}\bigg{|}\sum_{\substack{\deg y < r_0 \\ (y,F) = 1}}e_{F_0}(a_0\ov{xy}) \bigg{|}\\
    &\ll_{\epsilon} q^{n-r_0/2 + \epsilon n} \leq q^{25n/32 + \epsilon n}
\end{align*}
since $r_0 \geq 7n/16$. 

\textit{Case 2:} $u \leq r_0/3$ and $r_0/3 \leq n-u \leq r_0$. Using equation \eqref{eq:garaev} together with Lemma \ref{lem:inverse_energy_k=2} yields 
$$S_1 \ll_{\epsilon} q^{3(n-u)/4 + r_0/4 + u/2 + \epsilon n} = q^{3n/4 + r_0/4 - u/4 + \epsilon n }.$$
Also, separately applying Lemma \ref{lem:weil_different_modulus} to $S_1$ (to the inner sum over $y$) implies 
$$S_1 \ll_{\epsilon} q^{u + r_0/2 + \epsilon n}.$$
By combining these two estimates we have 
$$S_1 \ll_{\epsilon} q^{3n/5 + 3r_0/10 + \epsilon n} \leq q^{2n/3 + r/4 + \epsilon n}$$
since $n > 3r/4$. 

\textit{Case 3:} $r_0/3 \leq u \leq /3$ and $r_0/3 \leq n-u \leq r_0$. Here we use  Lemma \ref{lem:bilinear} with $k_1=k_2=2$ together with Lemma \ref{lem:inverse_energy_k=2}, giving 
$$S_1 \ll_{\epsilon} q^{n + \epsilon n + \frac{1}{8}(7u/2 - r_0/2 + 7(n-u)/2 - r_0/2 + r_0 - 4t_1 - 4(n-u))} = q^{15n/16 + \epsilon n}.$$

For the remaining cases bounding $S_1$, we may assume that $n-u \leq r_0/3$, which implies $u \geq n-r_0/3$. Note that this also implies $n \leq r_0$ since $u \leq 2r_0/3$. 

\textit{Case 4:} $n-r_0/3 \leq u \leq 2n/3$ and $n-u \leq r_0/3$. Here, by again applying equation \eqref{eq:garaev} with Lemma \ref{lem:inverse_energy_k=2} we have 
$$S_1 \ll_{\epsilon} q^{3u/4 + r_0/4 + (n-u)/2 + \epsilon n} \leq q^{2n/3 + r/4 + \epsilon n}.$$

\textit{Case 5:} $2n/3 \leq u \leq 2r_0/3$ and $n-u \leq r_0/3$. Applying the Cauchy-Schwarz inequality directly to $S_1$ shows
\begin{align*}
    S_1^2
    &\ll_{\epsilon} q^{u + \epsilon n}\sum_{\substack{y_1,y_2 \\ \deg y_i < n-u \\ (y_i,F) = 1}}\sum_{\substack{\deg x < u \\ (x,F) = 1}}e_F(a\ov{x}(\ov{y_1}+\ov{y_2})).
\end{align*}
Isolating the case $y_1 = -y_2$ and then applying Lemma \ref{lem:weil_incomplete} to the sum over $x$ implies 
\begin{align}\label{eq:3r/4_garaevcase}
    S_1^2
    &\ll_{\epsilon} q^{n + u+\epsilon n } + q^{u + r/2 + \epsilon n}T
\end{align}
where 
$$T = \sum_{\substack{y_1,y_2 \\ \deg y_i < n-u \\ (y_i,F) = 1}}q^{\deg(F,\ov{y_1} + \ov{y_2})/2}.$$
We can rearrange and write
\begin{align*}
  T =  \sum_{\substack{d|F \\ d\text{ monic}}}q^{\deg d/2}\sum_{\substack{0 \leq \deg a < r \\ (a,F) = d}}I_{F,a}(n-u)
\end{align*}
with $I_{F,a}(n-u)$ as in \eqref{eq:sum_of_inv}. Now mimicking the argument after equation \eqref{eq:E_inv_split_gcd_2} identically shows
$$T \leq \sum_{\substack{d|F \\ d\text{ monic}}}q^{\deg d/2}q^{2n-2u - \deg d} \ll_{\epsilon} q^{2n-2u + \epsilon n}. $$
Substituting back into \eqref{eq:3r/4_garaevcase} yields
$$S_1 \ll_{\epsilon} q^{n/2 + u/2 + \epsilon n } + q^{r/4 +n-u/2 +\epsilon n} \ll q^{2n/3 + r/4 + \epsilon n}$$
where we have again used $n > 3r/4$. 

Combining all $5$ cases above yields a suitable bound for $S_1$. We now focus on bounding $S_2$, and similarly consider a number of cases depending on the size of $v$ and $n-v$. Without loss of generality, we may assume that $v \leq n-v$. 

\textit{Case 1:} $r_0/3 \leq v \leq r_0$ and $r_0/3 \leq n-v \leq r_0$. Here we may apply bounds identically to \text{Case 3} above when bounding $S_1$. 

The last two cases both use equation \eqref{eq:garaev} with Lemma \ref{lem:inverse_energy_k=2}.

\textit{Case 2:} $r_0/3 \leq v \leq r_0$ and $r_0 \leq n-v $. Here
$$S_2 \ll_{\epsilon} q^{n-v + 7v/8 - r_0/8 + \epsilon n} \leq  q^{15n/16 + \epsilon n}$$
since $r_0 \geq 7n/16$. 

\textit{Case 3:} $r_0 \leq v$ and $r_0\leq n-v$. In this case
$$S_2 \ll_{\epsilon} q^{v + n-v - r_0 + \epsilon n} \leq q^{n - r_0/4 + \epsilon n} \leq q^{15n/16 + \epsilon n}$$
since $r_0 \geq 7n/16$. 

Combining these cases yields a suitable bound for $S_2$, which now completes the proof.

\subsection{Proof of Theorem \ref{thm:irving}}
Again, this proof is similar to the other proofs previously in this section. Recall that we are wanting to bound 
$$S = \sum_{\deg F = r}\max_{a \in \F_q[T]}\bigg{|}\sum_{\substack{\deg x < n \\ (x,F) = 1}}\mu(x)e_F(a\ov{x})\bigg{|}.$$
For each $F$, let $a_F$ denote the value of $a$ for which the maximum on the inner sum is achieved. Then we can say 
\begin{align}\label{eq:irving_reduction}
    S
    &= \sum_{\substack{\deg d \leq r \\ d \text{ monic}}}\sum_{\substack{\deg F = r \\ (a_F, F) = d}}\bigg{|}\sum_{\substack{\deg x < n \\ (x,F) = 1}}\mu(x)e_{F/d}((a_F/d)\ov{x})\bigg{|}\nonumber\\
    &\leq \sum_{\substack{\deg d \leq r \\ d \text{ monic}}}\sum_{\substack{\deg F = r-\deg d }}\max_{(a,F) = 1}\bigg{|}\sum_{\substack{\deg x < n \\ (x,F) = 1}}\mu(x)e_{F}(a\ov{x})\bigg{|} \nonumber\\
    &= \sum_{j=1}^{r}q^j\sum_{\deg F = r-j}\max_{(a,F) = 1}\bigg{|}\sum_{\substack{\deg x < n \\ (x,F) = 1}}\mu(x)e_{F}(a\ov{x})\bigg{|}\nonumber\\
    &\ll_{\epsilon} q^{\epsilon n + j}\sum_{\deg F = r-j}\max_{(a,F) = 1}\bigg{|}\sum_{\substack{\deg x < n \\ (x,F) = 1}}\mu(x)e_{F}(a\ov{x})\bigg{|}
\end{align}
for some integer $1 \leq j \leq r$. 

First, suppose $j > r- 2n/5$. Then applying Lemma \ref{lem:mobius_bound_smallr0} to the inner-sum implies 
\begin{align*}
    q^{\epsilon n + j}\sum_{\deg F = r-j}\max_{a \in \F_q[T]}\bigg{|}\sum_{\substack{\deg x < n \\ (x,F) = 1}}\mu(x)e_F(a\ov{x})\bigg{|} &\ll_{\epsilon} q^{j + r-j + r-j + n/2 + n\epsilon}\\
&\ll q^{r + 9n/10 + n\epsilon}
\end{align*}
so we may assume that $j\leq r-2n/5$. 

We let $U = \min\{n/3, 5n/8 - r/4 \}$. By Lemma \ref{lem:mobius_bound_bigr0} and equation \eqref{eq:irving_reduction} the problem reduce to bounding
$$T_1 = q^{\epsilon n + j}\sum_{\deg F = r-j}\max_{(a,F) = 1}S_1, ~~T_2 = q^{\epsilon n + j}\sum_{\deg F = r-j}\max_{(a,F) = 1}S_2$$
with $S_1$ and $S_2$ as in Lemma \ref{lem:mobius_bound_bigr0}. In Lemma 
\ref{lem:mobius_bound_bigr0}, the condition on $u$ is given as $u \leq 2U$. But since $2U \leq n-U$ here, the case of $U \leq u \leq 2U$ is covered when dealing with $S_2$ (since all of our methods for bounding $S_2$ also apply to $S_1$). So when bounding $S_1$ we may assume $u \leq U$. 

First, we deal with $T_1$. We may apply Lemma \ref{lem:weil_incomplete} to the inner sum over $y$. If $n-u \leq r-j$ then
$$T_1 \ll_{\epsilon} q^{n\epsilon + j + r-j + u + (r-j)/2} \ll q^{3r/2 + u + n\epsilon } \ll q^{5n/8 + 5r/4 + n\epsilon},$$
or if $n-u \geq r-j$ then 
$$T_1 \ll_{\epsilon} q^{n\epsilon + j + r-j + u +n-u-(r-j) + (r-j)/2} \ll q^{r + 4n/5 + n\epsilon } $$
where we have used $j \leq r-2n/5$. 

Next, we deal with $T_2$. By Lemma \ref{lem:garaev_average} and Lemma \ref{lem:add_energy_average} we have  that for any positive integer $k$, 
\begin{align}\label{eq:T_2}
    T_2
    &\ll_{\epsilon} q^{\epsilon n + j}q^{(v + r-j)\frac{2k-1}{2k} + \max\{v, r-j\}\frac{1}{2k}}\left(q^{(r-j)/2k + n/2 - v/2} + q^{n-v} \right).
\end{align}
We consider two cases depending on the size of $v$ and $n-v$. Since we have treated the inner sum $S_2$ in $T_2$ as a bilinear Kloosterman sum with arbitrary weights, and the ranges on $v$ and $n-v$ are equal, we may also interchange $v$ and $n-v$. Thus, considering the range $2n/5 \leq v \leq n-U$ is enough, since if $v \leq 2n/5$ then $n-v \geq 3n/5$, so we may swap $v$ and $n-v$ to get back into the range $2n/5 \leq v \leq n-U$. 

\textit{Case 1:} $2n/5 \leq v \leq n/2$ and $\max\{v, r-j\} = r-j$. Here we use \eqref{eq:T_2} with $k=2$,  
\begin{align*}
    T_2
    &\ll_{\epsilon} q^{r + n\epsilon}(q^{n-v/4} + q^{n/2+r/4-j/4+v/4})\ll_{\epsilon} q^{r + n\epsilon}(q^{9n/10} + q^{r/4 +5n/8}).
\end{align*}

\textit{Case 2:} $3n/5 \leq v \leq n-U$ and $\max\{v, r-j\} = r-j$. Here we use \eqref{eq:T_2} with $k=3$,  
\begin{align*}
    T_2
    &\ll_{\epsilon} q^{r + n\epsilon}(q^{r/6 -j/6 + n/2 + v/3} + q^{n-v/6 })\\
    &\ll q^{r + n\epsilon}(q^{r/6 + 5n/6 - U/3} + q^{9n/10})\\
    &\ll q^{r + n\epsilon}(q^{13n/18 + r/6} + q^{5n/8 + r/4} + q^{9n/10})
\end{align*}
where we have used $U = \min\{n/3, 5n/8-r/4\}$. 

\textit{Case 3:} $2n/5 \leq v \leq n - U$ and $\max\{v, r-j\} = v$. Here we use \eqref{eq:T_2} with $k=2$,  
\begin{align*}
    T_2
    &\ll_{\epsilon} q^{r + n\epsilon}(q^{n/2 + v/2} + q^{n-r/4 + j/4})\\ 
    &\ll q^{r + n\epsilon}(q^{5n/6}+ q^{11n/16 + r/8}  + q^{9n/10})
\end{align*}
where we have used $j \leq r-2n/5$ and $U = \min\{n/3, 5n/8-r/4\}$.

Combining all of our estimates for $T_1$ and $T_2$ yields the desired result. 

\subsection{Proof of Theorem \ref{cor:vonmangoldt}}
This result follows from substituting Theorem \ref{thm:mobius_3/4} instead of \cite[Theorem 1.8]{sawinshusterman} into the proof of \cite[Theorem 1.9]{sawinshusterman}, but we sketch the details here. We let $d = n -r$, and thus the condition that 
$n\omega \geq r$ for some $\omega < 1/2 + 1/62$ can be rewritten as
$$d \geq r\frac{1-\omega}{\omega} = r(1-\omega')$$
for some $\omega' < 1/16$. We let $\theta > 0$ (which will be taken to be sufficiently small as needed). Also, we let 
$$\epsilon = \frac{16}{15}\left(\frac{1}{16}-\omega'-2\theta\right).$$
By \cite[equation (5.9)]{sawinshusterman}, it suffices to bound 
\begin{align*}
    S 
    &= \sum_{k=1}^{d+r}k\sum_{\substack{x \in \cM_k \\ (x,F) = 1}}\mu(x)\sum_{\substack{y \in \cM_{r+d-k} \\ xy \equiv a \Mod{F}}}1. 
\end{align*}
As in \cite{sawinshusterman}, if $k \leq d $, we can apply Lemma \ref{lem:sawinshusterman_mainterm} to contribute the main term. 

We denote the remaining sum over $k > d$ by $S_0$, and note that 
$$S_0 \leq rk\Bigg{|}\sum_{\substack{x \in \cM_{k_0}\\ (x,F) = 1}}\mu(x)\sum_{\substack{y \in \cM_{r+d-k_0} \\ xy \equiv a \Mod{F}}}1\Bigg{|}$$
for some $k$ satisfying $d \leq k \leq d+r$. If $k \leq r(1+\epsilon)$, then using \cite[equation (5.10)]{sawinshusterman}, applying Theorem \ref{thm:mobius_3/4}  and using $k \leq r(1+\epsilon)$ yields 
\begin{align}\label{eq:BV_to_mobius}
    S_0
    &\leq rkq^{d-k}\sum_{\substack{\deg h < k-d}}\Bigg{|}\sum_{\substack{x \in \cM_{k} \\ (x,F) = 1}}\mu(x)e_F(ah\ov{x}) \Bigg{|}\\
    &\ll_\theta rkq^{\theta r}(q^{15k/16} + q^{2k/3 + r/4})\nonumber\\
    &\ll_{\theta} q^{15r/16 + 15r\epsilon/16 + r\theta}. \nonumber
\end{align}
We now use $\epsilon = (1/16 - \omega' - 2\theta)16/15$ and then $r \leq d +r\omega'$ to conclude
\begin{align*}
    S_0
    &\ll_{\theta} q^{r-r\omega - r\theta} \leq q^{d - r\theta}
\end{align*}
which is sufficient. So we may now assume that $k > r(1+\epsilon)$. We also let $\beta >0$. Rearranging $S_0$ we arrive at \cite[equation (5.13)]{sawinshusterman}, 
$$S_0 \leq rk\sum_{\substack{y \in \cM_{r+d-k_0}}}\bigg{|}\sum_{\substack{x \in \cM_{k} \\ xy \equiv a \Mod{F}}}\mu(x)  \bigg{|}.$$
Thus we may apply Lemma \ref{lem:sawinshusterman_mobius_dist} to yield 
$$S_0 \ll_{\theta, \beta} rkq^{r+d-k}q^{(k-r)(1-\beta/p)} \ll_{\beta} q^{d-r\beta/p\epsilon}$$
which again, is sufficient. This holds as long as 
$$q > \left(pe\frac{\epsilon + 2}{\epsilon} \right)^{\frac{2}{1-2\beta}} = \left(pe\left(1 + \frac{30}{1-16\delta'-32\theta}\right) \right)^{\frac{2}{1-2\beta}}.$$
But since we fix $p$ and $q$, we may choose $\theta$ and $\beta$ sufficiently small so that we only require 
\begin{align}\label{eq:q_bound_for_twin_primes}
    q > p^2e^2\left(1 + \frac{30}{1-16\omega'}\right)^{2}.
\end{align}
By substituting $\omega$ and rearranging we obtain the desired result.

\subsection{Proof of Theorem \ref{thm:bombierivinogradov}}
This proof uses essentially the same ideas as the proof of Theorem \ref{cor:vonmangoldt}, although it is slightly more technical. We may assume that $n = O(r)$, since for small $r$ this is implied by other results (for example, by Theorem \ref{cor:vonmangoldt}). We let $d = n-R$, and thus the condition that $R \leq n\omega$ for some $\omega < 1/2 + 1/38$ can be rewritten as $d \geq R(1-\omega')$ for some $\omega' < 1/10$. We let $\theta > 0$ (which will be taken to be sufficiently small as needed) 
Also, we let
$$\epsilon = \frac{10}{9}\left(\frac{1}{10}-w'-2\theta\right).$$

We rewrite the sum in question as
$$S = \sum_{r = 1}^{R-1}\sum_{\deg F = r}\max_{(a,F) = 1}\bigg|\sum_{\substack{x \in \cM_n \\ x \equiv a \Mod{F}}}\Lambda(x) ~-\frac{q^n}{\phi(F)} \bigg|.$$

For each $r$ in this sum, let $d_r = n-r$. Expanding this identically as in \cite[equation (5.9)]{sawinshusterman}, we can say $S \ll S_1 + S_2 + S_3$
where
\begin{align*}
    S_1
    &= \sum_{r=1}^{R-1}\sum_{\deg F = r}\bigg{|}-q^{d_r}\sum_{k=1}^{d_r}kq^{-k}\sum_{\substack{x \in \cM_k \\ (x,F) = 1}}\mu(x) -\frac{q^n}{\phi(F)}\bigg{|},\\
    S_2 &= \sum_{r=1}^{R-1}\sum_{\deg F = r}\max_{(a,F) = 1}\sum_{\substack{d_r < k < d_r + r \\ k \leq r(1+\epsilon)}}k\bigg{|}\sum_{\substack{x \in \cM_k \\ (x,F) = 1}}\mu(x)\sum_{\substack{y \in \cM_{r+d_r-k} \\ xy \equiv a \Mod{F}}}1\bigg{|},\\
    S_3 &= \sum_{r=1}^{R-1}\sum_{\deg F = r}\max_{(a,F) = 1}\sum_{\substack{d_r < k < d_r + r \\ k > r(1+\epsilon)}}k\bigg{|}\sum_{\substack{x \in \cM_k \\ (x,F) = 1}}\mu(x)\sum_{\substack{y \in \cM_{r+d_r-k} \\ xy \equiv a \Mod{F}}}1\bigg{|},
\end{align*}
and it suffices to show each $S_i \ll_{\omega} q^{n-R\delta}$ for some $\delta > 0$. 

To bound the contribution from $S_1$, we apply Lemma \ref{lem:sawinshusterman_mainterm} directly to see
\begin{align*}
    S_1
    &= \sum_{r=1}^{R-1}\sum_{\deg F = r}\left|q^{d_r}\left(-\frac{q^r}{\phi(F)} + q^{o(n)-d_r} \right) + \frac{q^n}{\phi(F)}\right|\\
    &\ll_{\theta} q^{R + \theta n} < q^{n-R\delta}
\end{align*}
for some $\delta >0$, since we can choose $\theta$ and $\delta$ sufficiently small and $R < n$.

Next, we consider $S_2$. Identically as in equation \eqref{eq:BV_to_mobius}, we can use Lemma \ref{lem:orthogonality} to say
\begin{align*}
    S_2 
    &\ll \sum_{r=1}^{R-1}\sum_{\substack{d_r < k < d_r + r \\ k \leq r(1+\epsilon)}}kq^{d_r-k}\sum_{\substack{\deg h < k-d_r}}\sum_{\deg F = r}\max_{(a,F) = 1}\Bigg{|}\sum_{\substack{x \in \cM_{k} \\ (x,F) = 1}}\mu(x)e_F(ah\ov{x}) \Bigg{|}.
\end{align*}
Then applying Theorem \ref{thm:irving} and using $k \leq r(1+\epsilon)$  and $r \leq R$ yields 
\begin{align*}
    S_2
    &\ll_{\theta} \sum_{r=1}^{R-1}\sum_{\substack{d_r < k < d_r + r \\ k \leq r(1+\epsilon)}}k\left(q^{5r/4 + 5k/8} + q^{r + 9k/10} + q^{7r/6 + 13k/18} \right)q^{\theta r/2}\\
    &\ll_{\theta} q^{R + 9R/10 + 9R\epsilon/10 + \theta R}
\end{align*}
where here we have used $kR \ll_{\theta} q^{R\theta/2}$. Using $\epsilon = (1/10-\omega'-2\theta)10/9$ and $R \leq d + R\omega' =  n-R + R\omega'$ means 
\begin{align*}
    S_2
    &\ll_{\theta} q^{n-R\theta}
\end{align*}
as desired. 

Finally, to bound $S_3$ we let $ \beta, \beta' > 0$ (which we will take to be sufficiently small as needed) and we can apply Lemma \ref{lem:sawinshusterman_mobius_dist}. Note that working identically to equation \eqref{eq:q_bound_for_twin_primes}, this will only hold for 
$$q > p^2e^2\left(1+\frac{18}{1-10\omega'} \right)^2.$$
Regardless, regarranging $S_3$ and then applying Lemma \ref{lem:sawinshusterman_mobius_dist} means 
\begin{align*}
        S_3 &= \sum_{r=1}^{R-1}\sum_{\deg F = r}\max_{(a,F) = 1}\sum_{\substack{d_r < k < d_r + r \\ k > r(1+\epsilon)}}k\sum_{\substack{y \in \cM_{r+d_r-k} \\ (y,F) = 1}}\bigg{|}\sum_{\substack{x \in \cM_k \\ x \equiv \ov{y}a \Mod{F}}}\mu(x)\bigg{|}\\
        &\ll_{\beta, \beta', \theta} \sum_{r=1}^{R-1}\sum_{\substack{d_r < k < d_r + r \\ k > r(1+\epsilon)}}q^{n-\beta/p(k-r) +  n\beta'}
\end{align*}
where we have used $k \ll_{\beta'} q^{n\beta'}$. 
We now deal with two parts of this sum separately. For $r < R/3$ (which means $r < n/3$), we make the substitution $k > d_r = n-r$ to give 
\begin{align*}
    \sum_{r=1}^{R/3}\sum_{\substack{d_r < k < d_r + r \\ k > r(1+\epsilon)}}q^{n-\beta/p(k-r) +  n\beta'}
    &\ll_{\beta'} \sum_{r=1}^{R/3}q^{n-\beta/p(n-2r) + 2n\beta'}\\
    &\ll_{\beta'} q^{n- n(\beta/(3p) - 3\beta')}\\
    &\ll q^{n- R(\beta/(3p) - 3\beta')}
\end{align*}
which is admissable for $\beta$ and $\beta'$ chosen suitably. Finally for $r \geq R/3$, we make the substitutions $k > r(1+\epsilon)$, $\epsilon = (1/10 - \omega' - 2\theta)10/9$ and $d_r = n-r$ to give 
\begin{align*}
    \sum_{r=R/3}^{R-1}\sum_{\substack{d_r < k < d_r + r \\ k > r(1+\epsilon)}}q^{n-\beta/p(k-r) +  n\beta'}
    &\ll_{\beta'} \sum_{r=R/3}^{R-1}q^{n-r\epsilon\beta/p +  2n\beta'}\\
    &\ll_{\beta'} q^{n-R\epsilon\beta/(3p) +  3n\beta'} 
\end{align*}
which is admissible, since we have assumed that $n = O(r)$ and we may choose $\beta, \beta'$ suitably. 

Combining our estimates for $S_1, S_2$ and each part of $S_3$ gives the result.

\section{Acknowledgements}
The author is very grateful to Bryce Kerr and Igor Shparlinski for many long and helpful discussions about this work, and for reading over multiple drafts of this paper. 
During the preparation of this work, the author was supported by an Australian Government Research Training Program (RTP) Scholarship.

\bibliographystyle{plain} 

\bibliography{refs}

\end{document}